\theoremstyle{plain}
\newtheorem{thm}{Theorem}[section]
\newtheorem{lem}[thm]{Lemma}
\theoremstyle{definition}
\newtheorem*{thm*}{Theorem}
\theoremstyle{remark}
\newtheorem{rem}{Remark}[section]
\numberwithin{equation}{section}
\newcommand{\He}{\mathbb{H}}
\newcommand{\C}{\mathbb{C}}
\newcommand{\R}{\mathbb{R}}
\title[ Ingham's theorem on the Heisenberg group]
{An analogue of Ingham's theorem \\
	on the Heisenberg group}
\author[Bagchi, Ganguly, Sarkar and Thangavelu]
{Sayan Bagchi, Pritam Ganguly, Jayanta Sarkar\\ and Sundaram Thangavelu}
\address[S. Bagchi, J. Sarkar]{Department of Mathematics and Statistics\\
	Indian Institute of Science Education and Research Kolkata\\
	Mohanpur-741246, Nadia, West Bengal, India.} 
\email{sayansamrat@gmail.com, jayantasarkarmath@gmail.com}
\address[P. Ganguly, S. Thangavelu]{Department of Mathematics, Indian Institute of Science,  Bangalore-560 012, India.}
\email{pritam1995.pg@gmail.com, veluma@iisc.ac.in}
\keywords{Heisenberg group, special Hermite operators, quasi-analyticity, bi-graded spherical harmonics, Chernoff's theorem, Ingham's theorem.}
\subjclass[2010]{Primary: 43A80. Secondary: 22E25, 33C45, 26E10, 46E35.}
\begin{document}
	\begin{abstract}  
		We prove an exact analogue of Ingham's uncertainty principle for the group Fourier transform on the Heisenberg group. This is accomplished by explicitly constructing  compactly supported functions  on the Heisenberg group whose operator valued Fourier transforms have suitable Ingham type decay and proving an analogue  of Chernoff's theorem for the family of special Hermite operators. 
	\end{abstract}
	\maketitle
	
	\section{Introduction} Roughly speaking, the uncertainty principle for the Fourier transform on $ \R^n $ says that a function $ f $ and its Fourier transform $\widehat{f} $ cannot both have rapid decay. Several manifestations of this principle are known: Heisenberg-Pauli-Weyl inequality, Paley-Wiener theorem and Hardy's uncertainty principle are some of the most well known. But there are lesser known results such as theorems of Ingham and  Levinson. The best decay a non trivial function can have is vanishing identically outside a compact set and for such functions it is well known that their Fourier transforms extend to $ \C^n $ as entire functions and hence cannot vanish on any open set. For  any such function of compact support, its Fourier transform cannot have any exponential decay for a similar reason: if $ |\widehat{f}(\xi)| \leq C e^{-a|\xi|} $ for some $ a > 0 $, then it follows that $ f $ extends to a tube domain in $ \C^n $ as a holomorphic function and hence it cannot have compact support. So it is natural to ask the question: what is the best possible decay, on the Fourier transform side, that is allowed of a function of compact support? An interesting answer to this question is provided by the following theorem of Ingham \cite{I}.
	
	\begin{thm}[Ingham]\label{ingh}
		Let $ \Theta(y) $ be a nonnegative even function on $\R$ such that $ \Theta(y) $ decreases to zero when $ y \rightarrow \infty.$ There exists a nonzero continuous function $ f $ on $ \R,$ equal to zero outside an interval $ (-a,a) $ whose Fourier transform $ \widehat{f} $ satisfies the estimate $ |\widehat{f}(y)|\leq C e^{-|y|\Theta(y)} $ if and only if  $ \int_1^\infty \Theta(t) t^{-1} dt <\infty.$
	\end{thm}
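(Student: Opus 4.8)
The plan is to establish the two implications by complementary methods: the ``only if'' direction rests on a classical logarithmic-integral estimate for entire functions of exponential type, while the ``if'' direction is the classical infinite-product (equivalently, infinite-convolution) construction.

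\emph{Necessity.} Suppose $f\ne 0$ is continuous, vanishes off $(-a,a)$, and $|\widehat f(y)|\le Ce^{-|y|\Theta(y)}$. Then $f\in L^1(\R)$ has compact support, so $F:=\widehat f$ is a nonzero entire function of exponential type at most $a$ which is bounded on $\R$ (and tends to $0$ at $\infty$ by Riemann--Lebesgue). The key input is the classical fact that such an $F$ --- nonzero, of exponential type, with $\int_\R\frac{\log^+|F(x)|}{1+x^2}\,dx<\infty$ --- automatically satisfies $\int_\R\frac{\log^-|F(x)|}{1+x^2}\,dx<\infty$; this expresses precisely that a Fourier transform of a compactly supported function cannot decay too fast, and is proved via the harmonic majorant / Poisson--Jensen representation of $\log|F|$ in a half-plane. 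Inserting the hypothesis: for $|y|$ large one has $|F(y)|\le 1$, hence $\log^-|F(y)|\ge|y|\Theta(|y|)-\log C$; if the quantity $|y|\Theta(|y|)$ stays bounded then $\int_1^\infty\Theta(t)t^{-1}\,dt\le C'\int_1^\infty t^{-2}\,dt<\infty$ trivially, and otherwise $\log^-|F(y)|\ge\tfrac12|y|\Theta(|y|)$ for $|y|$ large, so
$$\int_1^\infty\frac{\Theta(t)}{t}\,dt=\int_1^\infty\frac{t\Theta(t)}{t^2}\,dt\ \lesssim\ \int_{|y|\ge 1}\frac{\log^-|F(y)|}{1+y^2}\,dy\ <\ \infty.$$

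\emph{Sufficiency.} Given $\int_1^\infty\Theta(t)t^{-1}\,dt<\infty$, I would build $f$ as the infinite convolution of the normalized indicators $\frac{1}{2a_n}\chi_{[-a_n,a_n]}$ for a nonincreasing sequence $a_n>0$ with $\sum_n a_n<\infty$; then $\widehat f(y)=\prod_{n\ge1}\frac{\sin(a_n y)}{a_n y}$, the function $f$ is continuous (uniform limit of uniformly Lipschitz partial convolutions), supported in $[-\sum_n a_n,\sum_n a_n]$, and nonzero since $\widehat f(0)=1$. Everything reduces to choosing $\{a_n\}$. Let $N(u)=\#\{n:a_n>1/u\}$; then $\sum_n a_n=\int_0^\infty N(u)u^{-2}\,du$, and from $|\sin\theta/\theta|\le\min(1,1/|\theta|)$ together with summation by parts one gets $|\widehat f(y)|\le\exp\!\bigl(-\int_{c}^{|y|}N(u)u^{-1}\,du\bigr)$ for $|y|$ large. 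So it suffices to exhibit a nondecreasing integer-valued $N$ with $\int^\infty N(u)u^{-2}\,du<\infty$ and $\int_1^r N(u)u^{-1}\,du\gtrsim r\Theta(r)$. Taking $N(u)$ comparable to $\sup_{1\le v\le u}v\Theta(v)$ does the job: the lower bound is immediate because $\Theta$ decreases, $\int_1^r\Theta\ge\int_{r/2}^r\Theta\ge\tfrac r2\Theta(r)$, while the summability follows from the elementary inequality $v\Theta(v)\le\Theta(1)+\int_1^v\Theta(w)\,dw$ and Fubini, which turns $\int_1^\infty u^{-2}\sup_{v\le u}v\Theta(v)\,du$ into (a constant plus) $\int_1^\infty\Theta(w)w^{-1}\,dw<\infty$. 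A harmless rescaling $\Theta\mapsto 2\Theta$ absorbs the factor $\tfrac12$ lost above, and on bounded frequency sets the estimate is trivial since $|\widehat f|\le1$.

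In summary: necessity is essentially one invocation of the logarithmic-integral theorem for entire functions of exponential type, and I expect that invocation --- pinning down that the hypotheses on $f$ place $F$ in the right class --- to be the only real content there. On the sufficiency side there is no genuine obstacle once one hits on the infinite-convolution ansatz; the single thing to verify carefully is that the condition $\int_1^\infty\Theta(t)t^{-1}\,dt<\infty$ is exactly what makes the counting function $N(u)\sim\sup_{1\le v\le u}v\Theta(v)$ simultaneously summable (so that $f$ has compact support) and large enough (so that $\widehat f$ decays like $e^{-|y|\Theta(|y|)}$). The bookkeeping near $u=0$ and at small frequencies, and passing to a monotone integer-valued $N$, is routine.
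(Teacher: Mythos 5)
Your proposal is correct, but it is worth noting that the paper does not actually prove Theorem \ref{ingh} at all: it quotes it from Ingham \cite{I} and records that Ingham's necessity argument goes through the Denjoy--Carleman theorem on quasi-analytic classes --- the same strategy the paper then emulates on $\He^n$ via the Chernoff-type Theorem \ref{ch-L-strong}. Your sufficiency argument (infinite convolution of normalized indicators, $\widehat f(y)=\prod_n \sin(a_ny)/(a_ny)$, with the counting function $N(u)\sim\sup_{1\le v\le u}v\Theta(v)$ mediating between $\sum_n a_n<\infty$ and the decay $e^{-|y|\Theta(y)}$) is exactly Ingham's construction and is also the template the paper follows in Section 4.1, where $G_N=F_1\ast\cdots\ast F_N$ plays the role of your partial convolutions and Lemma \ref{est-four-lem} plays the role of your bound $|\sin\theta/\theta|\le\min(1,1/|\theta|)$; the verification via $v\Theta(v)\le\Theta(1)+\int_1^v\Theta$ and Fubini is sound. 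Where you genuinely diverge is the necessity half: you invoke the logarithmic-integral theorem for the Cartwright class (a nonzero entire function of exponential type bounded on $\R$ has $\int\log^-|F(x)|(1+x^2)^{-1}dx<\infty$), rather than quasi-analyticity. This is a legitimate and, in one variable, shorter route; your case split on whether $|y|\Theta(|y|)$ is bounded is an unnecessarily fussy way to absorb the additive $\log C$ (which is itself integrable against $(1+y^2)^{-1}dy$), but that is cosmetic. The trade-off is that the complex-analytic argument is tied to the holomorphy of $\widehat f$ in a strip and does not transplant to operator-valued Fourier transforms, whereas the quasi-analyticity route --- the one Ingham used and the one this paper develops through its Chernoff theorem for special Hermite operators --- is precisely what survives the passage to the Heisenberg group.
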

	
	This theorem of Ingham and its close relatives   Paley -Wiener (\cite{PW1, PW2})  and Levinson (\cite{Levinson}) theorems have received considerable attention in recent years. In \cite{BRS}, Bhowmik et al proved analogues of the above theorem for $ \R^n,$ the $n$-dimensional torus $ \mathbb{T}^n $  and step two nilpotent Lie groups. See also the recent work of Bowmik-Pusti-Ray \cite{BPR} for a version of Ingham's theorem for the Fourier transform on Riemannian symmetric spaces of non-compact type. As we are interested in Ingham's theorem on the Heisenberg group, let us recall the result proved in \cite{BRS}. Let $ \He^n = \C^n \times \R $ be the Heisenberg group. For an integrable function $ f $ on $ \He^n $, let $ \widehat{f}(\lambda) $ be the operator valued Fourier transform of $ f $ indexed by non-zero reals $ \lambda.$  Measuring the decay of the Fourier transform in terms of the Hilbert-Schmidt operator norm $ \|\widehat{f}(\lambda)\|_{HS} $ Bhowmik et al. have proved the following result.
	
	\begin{thm}[Bhowmik-Ray-Sen]\label{BRS}
		Let $ \Theta(\lambda) $ be a nonnegative even function on $\R$ such that $ \Theta(\lambda) $ decreases to zero when $ \lambda \rightarrow \infty.$ There exists a nonzero, compactly supported  continuous function $ f $ on $ \He^n,$  whose Fourier transform satisfies the estimate $ \|\widehat{f}(\lambda)\|_{HS} \leq C |\lambda|^{n/2}e^{-|\lambda| \Theta(\lambda)} $ if  the integral  $ \int_1^\infty \Theta(t) t^{-1} dt <\infty.$ On the other hand, if the above estimate is valid for a function $ f $ and the integral $ \int_1^\infty \Theta(t) t^{-1} dt $ diverges, then the vanishing of $ f $ on any set of the form $ \{z\in \C^n: |z|< \delta \} \times \R $ forces $ f $ to be identically zero.
	\end{thm}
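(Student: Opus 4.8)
The plan is to treat the two implications by entirely different methods. For the sufficiency (``if'') part I would construct the required compactly supported function essentially by hand, bootstrapping from the one–dimensional Ingham construction and transferring it to $\He^n$ via the Weyl transform; for the necessity (rigidity) part I would convert the decay of $\widehat f(\lambda)$ into a quasi–analyticity statement for the partial inverse Fourier transforms $f^\lambda(z)=\int_{\R}f(z,t)\,e^{i\lambda t}\,dt$ relative to the family of special Hermite operators, and then invoke a Chernoff–type theorem for that family.

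\textbf{Sufficiency.} I would first recall the one–dimensional lemma behind Ingham's theorem: if $\Theta$ is nonnegative, even, decreasing to $0$ and $\int_1^\infty\Theta(t)t^{-1}\,dt<\infty$, there is a nonzero even $g_0\in C_c(\R)$ with $|\widehat{g_0}(\lambda)|\le C e^{-|\lambda|\Theta(\lambda)}$; the classical construction writes $\widehat{g_0}$ as an infinite product $\prod_k\big(\sin(a_k\lambda)/(a_k\lambda)\big)$ with a summable sequence $a_k$ read off from $\Theta$, the convergence of $\sum a_k$ together with the decay of the product being exactly what the integral condition provides. Next I would replace $g_0$ by $g=(\delta_0-\chi)^{*m}*g_0$, where $\chi\in C_c^\infty(\R)$ is even with $\int\chi=1$ and $2m\ge n$: this keeps $g$ nonzero in $C_c(\R)$, preserves the bound $|\widehat g(\lambda)|\le C e^{-|\lambda|\Theta(\lambda)}$, and in addition makes $\widehat g$ vanish to order $n$ at the origin, so that $|\widehat g(\lambda)|\le C|\lambda|^n e^{-|\lambda|\Theta(\lambda)}$ for \emph{all} $\lambda$. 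Finally I would take any nonzero $\varphi\in C_c^\infty(\C^n)$ and set $f(z,t)=\varphi(z)g(t)$. Since the centre acts on $\pi_\lambda$ only through the character $e^{i\lambda t}$, one has $\widehat f(\lambda)=\widehat g(\lambda)\,W_\lambda(\varphi)$, and the Plancherel identity for the Weyl transform, $\|W_\lambda(\varphi)\|_{HS}=(2\pi)^{n/2}|\lambda|^{-n/2}\|\varphi\|_{L^2(\C^n)}$, gives $\|\widehat f(\lambda)\|_{HS}=(2\pi)^{n/2}\|\varphi\|_2\,|\lambda|^{-n/2}|\widehat g(\lambda)|\le C|\lambda|^{n/2}e^{-|\lambda|\Theta(\lambda)}$, which is the asserted estimate. (Choosing $\varphi$ supported in $\{|z|\ge\delta\}$ makes $f$ also vanish on $\{|z|<\delta\}\times\R$, which is why the dichotomy in the statement is sharp.)

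\textbf{Rigidity.} Suppose now $\|\widehat f(\lambda)\|_{HS}\le C|\lambda|^{n/2}e^{-|\lambda|\Theta(\lambda)}$, that $\int_1^\infty\Theta(t)t^{-1}\,dt=\infty$, and that $f$ vanishes on $\{z:|z|<\delta\}\times\R$. I would first reduce to $f\in C_c^\infty(\He^n)$: convolving on $\He^n$ with a smooth approximate identity $\psi$ supported in a small ball produces $f*\psi\in C_c^\infty(\He^n)$, still vanishing on $\{|z|<\delta'\}\times\R$ for a slightly smaller $\delta'>0$, and, since $\|\widehat\psi(\lambda)\|_{\mathrm{op}}\le\|\psi\|_1=1$, with $\|\widehat{f*\psi}(\lambda)\|_{HS}\le\|\widehat f(\lambda)\|_{HS}$; it is enough to show every such mollification is zero. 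For smooth compactly supported $f$ the partial Fourier transform $f^\lambda$ lies in $\mathcal S(\C^n)$, vanishes on $\{|z|<\delta'\}$, and satisfies $\widehat f(\lambda)=W_\lambda(f^\lambda)$. The core of the argument is then to show that the decay of $\|\widehat f(\lambda)\|_{HS}$ in $\lambda$, once combined with the compactness of the $z$–support, produces estimates on the iterates $L_\lambda^k f^\lambda$ (where $L_\lambda$ is the special Hermite operator, $W_\lambda(L_\lambda g)=W_\lambda(g)H(\lambda)$ for the scaled Hermite operator $H(\lambda)$) which place $f^\lambda$ in the Denjoy--Carleman class determined by that sequence, the quasi–analyticity of the class being matched term by term with the divergence of $\int_1^\infty\Theta(t)t^{-1}\,dt$. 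Granting this, the Chernoff–type theorem for the family $\{L_\lambda\}_{\lambda\neq 0}$ — asserting that a function in such a class which vanishes on a nonempty open subset of $\C^n$ must vanish identically — applies to $f^\lambda$ and forces $f^\lambda\equiv 0$ for every $\lambda\neq 0$; inverting the partial Fourier transform in $t$ then gives $f\equiv 0$.

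\textbf{Where the difficulty lies.} The delicate step is the passage ``decay of $\|\widehat f(\lambda)\|_{HS}$ in $\lambda$ $\Longrightarrow$ good control of $\|L_\lambda^k f^\lambda\|_2$ in $k$''. A naive attempt fails, because $W_\lambda(L_\lambda^k f^\lambda)=W_\lambda(f^\lambda)H(\lambda)^k$ and $H(\lambda)$ is unbounded, so the iterates are not controlled by the Hilbert--Schmidt norm of $\widehat f(\lambda)$ alone. What makes the argument go is the interplay between the spectral decomposition of $\widehat f(\lambda)$ along the eigenspaces of $H(\lambda)$ — carried out concretely through the bi‑graded spherical harmonic and Laguerre expansion of $f$, which turns $\widehat f(\lambda)$ into a family of Laguerre‑type operators — and the fact that $f$ has compact support in $z$: the special Hermite components of $\widehat f(\lambda)$ attached to the eigenvalue $(2k+n)|\lambda|$ are governed by the behaviour of $f^\lambda$ at the length scale $|z|\sim\sqrt{k/|\lambda|}$, and feeding this into the decay hypothesis produces precisely the Carleman sequence whose non‑quasi‑analyticity is equivalent to the convergence of $\int_1^\infty\Theta(t)t^{-1}\,dt$. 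Making this reduction rigorous, and above all establishing the Chernoff theorem for the family $\{L_\lambda\}_{\lambda\neq 0}$ — a Denjoy--Carleman argument into which the geometry of the Hermite semigroup has to be built — is the substantive part of the proof; the remaining steps reduce to bookkeeping with the Weyl transform and the Plancherel formula.
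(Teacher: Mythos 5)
First, a point of orientation: Theorem \ref{BRS} is not proved in this paper at all --- it is quoted verbatim from Bhowmik--Ray--Sen \cite{BRS} as background, so there is no in-paper proof to measure your attempt against. Judged on its own terms, your sufficiency argument is fine: tensoring a one-dimensional Ingham function $g$ (corrected to vanish to order $n$ at $\lambda=0$) with a bump $\varphi\in C_c^\infty(\C^n)$ gives $\widehat f(\lambda)=\widehat g(\lambda)W_\lambda(\varphi)$, and the Weyl--Plancherel identity \eqref{wplan} converts $|\widehat g(\lambda)|\le C|\lambda|^n e^{-|\lambda|\Theta(\lambda)}$ into exactly the stated bound $\|\widehat f(\lambda)\|_{HS}\le C|\lambda|^{n/2}e^{-|\lambda|\Theta(\lambda)}$. (Note this is much softer than the construction in Section 4.1 of the paper, which has to produce decay along the full Hermite spectrum for Theorem \ref{ingh-hei}; for the purely central decay of Theorem \ref{BRS} a tensor product suffices.)

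The necessity half, however, has a genuine gap: you have transplanted the proof of Theorem \ref{ingh-hei-variant}, which is driven by the hypothesis $\widehat f(\lambda)^\ast\widehat f(\lambda)\le Ce^{-2\Theta(\sqrt{H(\lambda)})\sqrt{H(\lambda)}}$, onto a theorem whose hypothesis decays only in the central variable $\lambda$. The step that fails is precisely the one you flag as delicate. For fixed $\lambda$ one has
\begin{equation*}
\|L_\lambda^m f^\lambda\|_2^2 \;=\; (2\pi)^{-n}|\lambda|^n\sum_{\alpha}\big((2|\alpha|+n)|\lambda|\big)^{2m}\,\|\widehat f(\lambda)\Phi_\alpha^\lambda\|_2^2,
\end{equation*}
and the bound $\|\widehat f(\lambda)\|_{HS}^2=\sum_\alpha\|\widehat f(\lambda)\Phi_\alpha^\lambda\|_2^2\le C|\lambda|^{n}e^{-2|\lambda|\Theta(\lambda)}$ is a single number independent of $\alpha$: it gives no decay whatsoever along the Hermite spectrum, so the weighted sum above is uncontrolled in $m$ and the Carleman condition $\sum_m\|L_\lambda^mf^\lambda\|_2^{-1/(2m)}=\infty$ cannot be extracted. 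Contrast this with \eqref{est1} in the paper, where the exponential $e^{-2\Theta(((2|\alpha|+n)|\lambda|)^{1/2})((2|\alpha|+n)|\lambda|)^{1/2}}$ depends on $\alpha$ and beats the polynomial weight. Your proposed rescue --- compact support of $f$ in $z$ controlling the components at scale $|z|\sim\sqrt{k/|\lambda|}$ --- does not work either: part (b) of Theorem \ref{BRS} assumes only vanishing on the cylinder $\{|z|<\delta\}\times\R$, not compact $z$-support, and even compact support would at best place $f^\lambda$ in a Gevrey-type (non-quasi-analytic) class for $L_\lambda$. What the hypothesis of Theorem \ref{BRS} actually controls is $\|T^mf\|_2$ for the central derivative $T=\partial_t$, via $\widehat{Tf}(\lambda)=i\lambda\widehat f(\lambda)$, with $M_m=\big(\int|\lambda|^{2m+2n}e^{-2|\lambda|\Theta(\lambda)}d\lambda\big)^{1/2}$ satisfying $\sum M_m^{-1/m}=\infty$ exactly when $\int_1^\infty\Theta(t)t^{-1}dt=\infty$; as the paper's own remark about the Hilbert--Schmidt version of Hardy's theorem indicates, this is ``essentially a theorem in the $t$-variable,'' and the argument in \cite{BRS} runs through the Radon transform and the Bochner--Taylor several-variable Denjoy--Carleman theorem rather than through a fixed-$\lambda$ Chernoff theorem for $L_\lambda$.
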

	
	As the Fourier transform on the Heisenberg group is operator valued, it is natural to measure the decay of $ \widehat{f}(\lambda) $ by comparing it with the Hermite semigroup $ e^{-aH(\lambda)} $ generated by $ H(\lambda)= -\Delta_{\mathbb{R}^n}+\lambda^2 |x|^2.$ In this connection, let us recall the following two versions of Hardy's uncertainty principle. Let $ p_a(z,t) $ stand for the heat kernel associated to the sublaplacian $ \mathcal{L} $ on the Heisenberg group whose Fourier transform turns out to be  the Hermite semigroup $ e^{-aH(\lambda)} .$ The version in which one measures the decay of $ \widehat{f}(\lambda) $ in terms of its Hilbert-Schmidt operator norm reads as follows. If
	\begin{equation}
		|f(z,t)| \leq C e^{-a(|z|^2+t^2)} ,\,\,  \|\widehat{f}(\lambda)\|_{HS} \leq C e^{-b \lambda^2 } 
	\end{equation}
	then $ f = 0 $ whenever $ ab > 1/4.$ This is essentially a theorem in the $ t$-variable and can be easily deduced from Hardy's  theorem on $ \R $, see Theorem 2.9.1 in \cite{TH3}. Compare this with the following version \cite[Theorem 2.9.2]{TH3}. If
	\begin{equation}
		|f(z,t)| \leq C p_a(z,t) ,\,\,   \widehat{f}(\lambda)^\ast \widehat{f}(\lambda) \leq C e^{-2bH(\lambda)} 
	\end{equation}
	then $ f = 0 $ whenever $ a < b.$ This latter version is the exact analogue of Hardy's theorem for the Heisenberg group, which we can view not merely as an uncertainty principle but also as a characterization of the heat kernel. Hardy's theorem in the context of semi-simple Lie groups and non-compact Riemannian symmetric spaces are also to be viewed in this perspective.

	We remark that the  Hermite semigroup has been used to measure the decay  of the Fourier transform in connection with the heat kernel transform \cite{KTX}, Pfannschmidt's theorem \cite{TH4} and the extension problem for  the sublaplacian \cite{RT} on the Heisenberg group. In connection with the study of Poisson integrals, it has been noted in \cite{ TH5} that when the Fourier transform of $ f $ satisfies an estimate of the form $\widehat{f}(\lambda)^\ast \widehat{f}(\lambda) \leq C e^{-a \sqrt{H(\lambda)}} ,$ then the function extends to a tube domain in the complexification of $ \He^n $ as a holomorphic function and hence the vanishing of $ f $ on an open set forces it to vanish identically. It is therefore natural to ask if the same conclusion can be arrived at by replacing the constant $ a $ in the above estimate by an operator $ \Theta(\sqrt{H(\lambda)}) $ for a function $ \Theta $ decreasing to zero at infinity. Our investigations have led us to the following exact analogue of Ingham's theorem for the Fourier transform on $ \He^n.$ 
	
	\begin{thm}\label{ingh-hei}
		Let $ \Theta(\lambda) $ be  a nonnegative  function on $ [0,\infty)$ which  decreases to zero as $ \lambda \rightarrow \infty .$ Then there exists a nonzero compactly supported continuous function $ f $ on $ \He^n$ whose  Fourier transform $ \widehat{f} $ satisfies  the estimate 
		\begin{equation}
			\label{fdecay} \widehat{f}(\lambda)^\ast \widehat{f}(\lambda) \leq C  e^{-2\Theta(\sqrt{H(\lambda)})\sqrt{H(\lambda)}},~\lambda\neq0, 
		\end{equation}
	if and only if $\Theta$ satisfies the condition $ \int_1^\infty \Theta(t) t^{-1} dt <\infty.$
	\end{thm}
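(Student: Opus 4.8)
The argument naturally splits into the construction (\emph{if}) and the obstruction (\emph{only if}), and I would treat them separately.

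\emph{Sufficiency.} Assuming $\int_1^\infty\Theta(t)t^{-1}\,dt<\infty$, I would build $f$ as an infinite group convolution of wave-type kernels for the sublaplacian $\mathcal L$, whose group Fourier transform is (right) multiplication by $H(\lambda)$. For $a>0$ let $g_a$ be the convolution kernel of the operator $\sin(a\sqrt{\mathcal L})/(a\sqrt{\mathcal L})$; since the wave propagator for $\mathcal L$ has finite speed with respect to the Carnot--Carath\'eodory distance, $g_a$ is a compactly supported distribution on $\He^n$, supported in the ball of radius $a$ about the identity, and $\widehat{g_a}(\lambda)=\sin(a\sqrt{H(\lambda)})/(a\sqrt{H(\lambda)})$. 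I would then choose a strictly increasing sequence $b_j\to\infty$ of positive reals with $\sum_j b_j^{-1}<\infty$ whose counting function $N(R)=\#\{j:b_j\le R\}$ satisfies $N(R)\ge cR\,\Theta(R)$ for all large $R$ — such a sequence exists precisely because $\int_1^\infty\Theta(t)t^{-1}\,dt<\infty$, this being the classical combinatorial lemma underlying the ``if'' part of Theorem~\ref{ingh} — and I would moreover append a fixed rapidly growing sequence such as $\{2^j\}$ to $\{b_j\}$ so that $N(R)$ grows fast enough to force continuity below. Putting $a_j=b_j^{-1}$ and $f=g_{a_1}*g_{a_2}*\cdots$, the support of $f$ lies in the ball of radius $\sum_j a_j<\infty$; since each factor is a function of $\mathcal L$ they all commute, so $\widehat f(\lambda)$ is the self-adjoint operator $\Phi(H(\lambda))$ with $\Phi(r)=\prod_j\bigl(\sin(a_j\sqrt r)/(a_j\sqrt r)\bigr)$. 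The bound $|\sin\theta/\theta|\le\min(1,|\theta|^{-1})$ together with the choice of $\{b_j\}$ gives $|\Phi(r)|\le Ce^{-\Theta(\sqrt r)\sqrt r}$ for all $r\ge0$ (and super-polynomial decay of $|\Phi|$, hence continuity of $f$ via the Plancherel/inversion formula on $\He^n$), so that $\widehat f(\lambda)^\ast\widehat f(\lambda)=\Phi(H(\lambda))^2\le C^2e^{-2\Theta(\sqrt{H(\lambda)})\sqrt{H(\lambda)}}$, while $f\ne0$ because $\Phi(r)\to1$ as $r\to0$.

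\emph{Necessity.} Suppose $f\in C_c(\He^n)$ is nonzero and satisfies \eqref{fdecay}; I argue by contradiction, assuming $\int_1^\infty\Theta(t)t^{-1}\,dt=\infty$. Writing $f^\lambda(z)=\int_\R f(z,t)e^{i\lambda t}\,dt$, one has $\widehat f(\lambda)=W_\lambda(f^\lambda)$ (the $\lambda$-Weyl transform), and $f^\lambda$ is continuous with compact support in $\C^n$. The Weyl transform is, up to a constant, an isometry of $L^2(\C^n)$ onto the Hilbert--Schmidt operators, and it carries the special Hermite operator $L_\lambda$ to right multiplication by $H(\lambda)$, so that, writing $P_k(\lambda)$ for the spectral projection of $H(\lambda)$ onto the eigenvalue $(2k+n)|\lambda|$,
\[
\|L_\lambda^m f^\lambda\|_2^2=c_\lambda\sum_{k\ge0}\bigl((2k+n)|\lambda|\bigr)^{2m}\,\|\widehat f(\lambda)P_k(\lambda)\|_{HS}^2 .
\]
Now \eqref{fdecay} is equivalent to $\|\widehat f(\lambda)P_k(\lambda)\|_{\mathrm{op}}\le\sqrt{C}\,e^{-\Theta(\sqrt{(2k+n)|\lambda|})\sqrt{(2k+n)|\lambda|}}$, and since $\dim P_k(\lambda)$ is polynomial in $k$, a direct estimate gives $\|L_\lambda^m f^\lambda\|_2\le C_\lambda M_m$ for a sequence $\{M_m\}$ independent of $\lambda$ with $M_m\asymp\sup_{s>0}s^{2m}e^{-\Theta(s)s}$. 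The analytic heart of this step is the Carleman--Denjoy equivalence
\[
\sum_{m\ge1}M_m^{-1/2m}=\infty\quad\Longleftrightarrow\quad\int_1^\infty\Theta(t)t^{-1}\,dt=\infty ,
\]
obtained by optimizing $s^{2m}e^{-\Theta(s)s}$ near the point $s_m$ where $s_m\Theta(s_m)\asymp m$ and integrating by parts.

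With this in hand I would invoke the analogue of Chernoff's theorem for the family of special Hermite operators: if $g\in L^2(\C^n)$ lies in $\bigcap_m\mathrm{Dom}(L_\lambda^m)$ and $\sum_m\|L_\lambda^m g\|_2^{-1/2m}=\infty$, then $g$ cannot vanish on a nonempty open set unless $g\equiv0$. Applied to $g=f^\lambda$ — which vanishes on a nonempty open subset of $\C^n$ since $f$ has compact support, and which satisfies $\sum_m\|L_\lambda^m f^\lambda\|_2^{-1/2m}\ge c\sum_m M_m^{-1/2m}=\infty$ by the two displays above — this forces $f^\lambda\equiv0$ for almost every $\lambda\ne0$, hence $f\equiv0$ by Fourier inversion on $\He^n$, contradicting $f\ne0$. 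Therefore $\int_1^\infty\Theta(t)t^{-1}\,dt$ must converge, which together with the construction completes the proof.

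I expect the genuinely hard part to be establishing Chernoff's theorem for the operators $L_\lambda$: because $L_\lambda$ has unbounded polynomial coefficients one cannot simply quote the Euclidean Chernoff theorem, and the route I anticipate is to expand $g$ in bi-graded spherical harmonics and Laguerre functions, reduce the quasi-analyticity of $g$ to a one-dimensional statement about Laguerre expansions, and then control carefully how vanishing of $g$ on an open subset of $\C^n$ propagates to the radial pieces. The remaining ingredients — finite propagation speed and spectral calculus for $\mathcal L$ on $\He^n$, the combinatorial lemma producing $\{b_j\}$, the intertwining of $L_\lambda$ with $H(\lambda)$, and the Carleman--Denjoy computation — are standard or routine.
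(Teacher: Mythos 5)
Your sufficiency argument takes a genuinely different route from the paper's. You build $f$ from the spectral multipliers $\sin(a_j\sqrt{\mathcal L})/(a_j\sqrt{\mathcal L})$ using finite propagation speed for the sublaplacian, whereas the paper (Theorem \ref{construct}) convolves explicit normalized indicators $F_j(z,t)=f_j(z)g_j(t)$ and controls the Laguerre coefficients $R_k^{n-1}(\lambda,f_j)$ directly through the sharp asymptotics of $\varphi^{n-1}_{k,\lambda}$ (Lemma \ref{est-four-lem}), then takes $N\sim\Theta(\sqrt{(2k+n)|\lambda|})\,\sqrt{(2k+n)|\lambda|}$ factors to manufacture the decay. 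Your route buys an exact functional calculus, $\widehat f(\lambda)=\Phi(H(\lambda))$, and the classical one-variable counting-function lemma, at the price of importing finite propagation speed on $\He^n$ and of making sense of an infinite convolution of compactly supported \emph{distributions} (plus a continuity verification via the inversion formula). Modulo those inputs this half is workable; it is essentially the mechanism used for Ingham's theorem on symmetric spaces.

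The necessity half, however, has a genuine gap at the step $\|L_\lambda^mf^\lambda\|_2\le C_\lambda M_m$ with $M_m\asymp\sup_{s>0}s^{2m}e^{-\Theta(s)s}$. First, that supremum can be infinite: divergence of $\int_1^\infty\Theta(t)t^{-1}dt$ does \emph{not} force $s\Theta(s)\to\infty$. Take $\Theta(t)=t_j^{-1}$ on $[t_j,t_{j+1})$ with $t_{j+1}=e^{t_j}$: then $\Theta$ decreases to zero, $\int_1^\infty\Theta(t)t^{-1}dt=\sum_j(1-t_j^{-1}\log t_j)=\infty$, yet $s\Theta(s)=1$ along $s=t_j$, so $e^{-\Theta(s)s}$ provides no decay on a subsequence of spectral values, the series $\sum_k\dim P_k(\lambda)\,((2k+n)|\lambda|)^{2m}e^{-2\Theta(\cdot)(\cdot)}$ diverges, and you cannot even conclude $f^\lambda\in\mathrm{Dom}(L_\lambda^m)$. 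Second, even when the sup is finite, bounding an infinite sum over the spectrum by its largest term needs additional summable decay that the hypothesis does not supply; so the asserted Carleman--Denjoy equivalence is not available in this generality. The paper handles exactly this point: it first proves Theorem \ref{ingh-hei-variant} under the extra assumption $\Theta(t)\ge c\,t^{-1/2}$ (which makes the tail $I_2$ converge and yields $\|L_\lambda^mf^\lambda\|_2^2\le C|\lambda|(2m/\Theta(m^4))^{4m}$), and then removes that assumption by convolving $f$ with an auxiliary compactly supported radial function $F$ produced by the sufficiency construction with $\Psi(y)=(1+|y|)^{-1/2}$, so that $\widehat{f\ast F}$ decays like $e^{-2(\Theta+\Psi)(\sqrt{H(\lambda)})\sqrt{H(\lambda)}}$ while $\int_1^\infty(\Theta+\Psi)(t)t^{-1}dt$ still diverges; a dilation/approximate-identity argument then recovers $f=0$ from $f\ast F_\varepsilon=0$. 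This reduction is the missing idea in your outline. The remainder of your necessity argument --- Weyl-transform Plancherel, the intertwining of $L_\lambda$ with $H(\lambda)$, and Chernoff's theorem for $L_\lambda$ proved via bi-graded spherical harmonics and Laguerre expansions --- coincides with the paper's strategy.
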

	Under the assumption that $\int_1^\infty \Theta(t) t^{-1} dt =\infty$, the above theorem demonstrates that any compactly supported function whose Fourier transform satisfies \eqref{fdecay} vanishes identically. This can be viewed as an uncertainty principle in the sense mentioned in the first paragraph. Recently this aspect of Ingham's theorem has been proved in the context of higher dimensional Euclidean spaces and Riemannian symmetric spaces  with a much weaker hypothesis on the function. As observed in \cite{GT-heisenberg}, for the Heisenberg group case, the hypothesis can be weakened considerably if we slightly strengthen the condition \eqref{fdecay}. More precisely, the second and the last author proved the following theorem in this context. 
	\begin{thm}\label{ingh-hei-1}\cite{GT-heisenberg}
		Let $ \Theta(\lambda) $ be  a nonnegative  function on $ [0,\infty)$ such that it  decreases to zero as $ \lambda \rightarrow \infty $,  and satisfies the conditions $  \int_1^\infty \Theta(t) t^{-1} dt = \infty.$  Let $ f $ be  an integrable  function on $ \He^n $ whose Fourier transform  satisfies  the estimate \begin{equation}
			\label{fdecay-strong}
			\hat{f}(\lambda)^\ast \hat{f}(\lambda) \leq C \, e^{-2|\lambda|\,\Theta(|\lambda|)}e^{-2 \sqrt{H(\lambda)} \,\Theta(\sqrt{H(\lambda))}}.
		\end{equation} Then $ f $ cannot vanish  on any  nonempty open set unless it is identically zero.
\end{thm}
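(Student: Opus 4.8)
My plan is to read \eqref{fdecay-strong} as saying that $f$ lies in a quasi-analytic class on $\He^n$ — one in which vanishing on a nonempty open set already forces vanishing everywhere — governed by the sub-Laplacian $\mathcal L$ (for the $\C^n$-directions) together with the central derivative $T=\partial_t$, and to extract this from the two exponential factors separately. First I would mollify: for $\phi\in C_c^\infty(\He^n)$ the function $\phi*f$ is smooth, lies in $L^1\cap L^\infty\subset L^2$ together with all its $\mathcal L^jT^l$-derivatives, still satisfies \eqref{fdecay-strong} up to a constant (since $\widehat{\phi*f}(\lambda)=\widehat\phi(\lambda)\widehat f(\lambda)$ and $\widehat\phi(\lambda)^\ast\widehat\phi(\lambda)\le\|\phi\|_1^2 I$), and vanishes on a nonempty open set whenever $\phi$ has small support; moreover if $\phi*f\equiv0$ for every such $\phi$ then already $f\equiv0$, so we may replace $f$ by a nonzero smooth representative and seek a contradiction. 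Left-translating (which leaves $\widehat f(\lambda)^\ast\widehat f(\lambda)$ unchanged, as $\widehat{L_gf}(\lambda)=\pi_\lambda(g)\widehat f(\lambda)$ is a unitary pre-multiplication) I may assume in addition that $f$ vanishes on a neighbourhood of the identity.

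Since $\mathcal L$ is a sum of squares of left-invariant vector fields, the group Fourier transform carries $\mathcal L$ to (right) multiplication by the scaled Hermite operator $H(\lambda)$ and $T$ to multiplication by $-i\lambda$, so $\widehat{\mathcal L^jT^lf}(\lambda)=(-i\lambda)^l\,\widehat f(\lambda)\,H(\lambda)^j$; Plancherel and the operator inequality \eqref{fdecay-strong} then give
\[
\|\mathcal L^jT^lf\|_2^2\;\le\;C\int_{\R}|\lambda|^{2l+n}\,e^{-2|\lambda|\Theta(|\lambda|)}\sum_{k\ge0}d_k\,\big((2k+n)|\lambda|\big)^{2j}\,e^{-2\sqrt{(2k+n)|\lambda|}\;\Theta(\sqrt{(2k+n)|\lambda|})}\,d\lambda,
\]
with $d_k\sim k^{n-1}$ the multiplicity of the $k$-th Hermite level. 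The factor $e^{-2|\lambda|\Theta(|\lambda|)}$ is an Ingham-type weight in the variable dual to $t$ and $e^{-2\sqrt{H(\lambda)}\Theta(\sqrt{H(\lambda)})}$ is one in the Hermite spectral variable dual to the $\C^n$-directions; the hypothesis $\int_1^\infty\Theta(t)t^{-1}\,dt=\infty$ is precisely the classical Carleman/logarithmic-integral condition making the Denjoy--Carleman class built from such weights quasi-analytic, the polynomial multiplicities $d_k$ being harmless. The proof is then finished by a Chernoff-type theorem: a smooth function on $\He^n$, lying in the common domain of all $\mathcal L^jT^l$ with $\|\mathcal L^jT^lf\|_2$ obeying the Carleman condition above and vanishing on a nonempty open set, must vanish identically; applied to $f$ this contradicts $f\neq0$.

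The main obstacle is this Chernoff-type theorem, whose core is a Chernoff theorem for the special Hermite operators $L_\lambda$ on $\C^n$. Here I would decompose along the bi-graded spherical harmonics of $\C^n$: on each isotypic component $L_\lambda$ acts as a one-dimensional Laguerre-type operator, so that — after an inverse Fourier transform in $t$ and an appeal to the twisted-convolution structure — the vanishing hypothesis is fed into the vanishing of the corresponding radial Laguerre expansions on an interval, at which point the one-dimensional Denjoy--Carleman theorem applies. Two points are genuinely delicate. First, the vanishing of $f$ on an open set of $\He^n$ does \emph{not} localize to vanishing of the slices $f^\lambda(z)=\int_\R f(z,t)e^{i\lambda t}\,dt$ on open subsets of $\C^n$, so the quasi-analyticity in the $z$- and $t$-variables must be carried out simultaneously — which is exactly why both exponential factors of \eqref{fdecay-strong} are needed at once. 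Second, turning \eqref{fdecay-strong} into a usable Carleman condition on $\{\|\mathcal L^jT^lf\|_2\}$ when $\Theta$ is only assumed to decrease to zero (so that $t\Theta(t)$ may oscillate) cannot be done from the Fourier-side decay alone; one must combine it with the smoothness of the mollified $f$, and it is convenient to carry out a preliminary regularization of $\Theta$ (e.g. arranging $t\Theta(t)$ to be non-decreasing) so that the weights become log-convex and the logarithmic-integral criterion applies cleanly.
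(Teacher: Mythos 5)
First, a point of reference: this paper does not actually prove Theorem \ref{ingh-hei-1}; it quotes it from \cite{GT-heisenberg}. The machinery that proof uses is nevertheless on display here, in the proofs of Theorems \ref{ingh-hei-variant} and \ref{ch-L-strong}, and your proposal should be measured against that. Your overall architecture --- Plancherel to convert \eqref{fdecay-strong} into Carleman-type bounds, a Chernoff/quasi-analyticity statement, and a treatment of the special Hermite operators $L_\lambda$ via bi-graded spherical harmonics, Laguerre operators and the one-dimensional Denjoy--Carleman theorem --- is the right one, and your Plancherel computation is correct. The genuine gap sits exactly at the point you yourself flag as delicate: you never say how the hypothesis ``$f$ vanishes on a nonempty open subset of $\He^n$'' is converted into a vanishing condition usable by the special-Hermite Chernoff theorem. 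That theorem (Theorem \ref{ch-L-strong}, or its open-set version in \cite{GT-adv}) needs $f^\lambda$ to vanish on an open subset of $\C^n$ (or to infinite order at a point), and, as you correctly observe, this does not follow by restriction. Declaring that the quasi-analyticity in $z$ and $t$ ``must be carried out simultaneously'' is not an argument, and a genuinely joint Chernoff theorem for the sublaplacian on $\He^n$ (Carleman condition on $\|\mathcal{L}^m f\|_2$ plus vanishing on an open set implies $f=0$) is precisely what the final Remark of this paper identifies as an open problem, so it cannot be invoked. The missing step --- and the entire reason \eqref{fdecay-strong} carries the extra factor $e^{-2|\lambda|\Theta(|\lambda|)}$ --- is a \emph{sequential} reduction: that factor alone yields, via Plancherel in the central variable, Carleman bounds on the $t$-derivatives of (a mollification of) $f$, hence quasi-analyticity of $t\mapsto f(z,t)$ for a.e.\ fixed $z$; since $f$ vanishes on some $U\times(a,b)$, the one-dimensional Denjoy--Carleman theorem forces $f(z,\cdot)\equiv 0$ for a.e.\ $z\in U$, and therefore $f^\lambda$ vanishes on $U$ for \emph{every} $\lambda$. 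Only then does the second factor, fed into the Chernoff theorem for $L_\lambda$, give $f^\lambda=0$ for each $\lambda$ and hence $f=0$. Without articulating this step your argument is circular.

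A secondary, more technical gap: your worry about $t\Theta(t)$ is legitimate (one can have $\Theta$ decreasing to zero with $\int_1^\infty\Theta(t)t^{-1}dt=\infty$ while $t\Theta(t)=O(\log t)$ on long blocks, in which case the spectral sums $\sum_k d_k\bigl((2k+n)|\lambda|\bigr)^{2j}e^{-2\sqrt{(2k+n)|\lambda|}\,\Theta(\sqrt{(2k+n)|\lambda|})}$ diverge for $j\geq 1$), but the fix you suggest --- regularizing $\Theta$ so that $t\Theta(t)$ becomes monotone --- does not obviously preserve the divergence of the logarithmic integral, and you cannot replace $\Theta$ by a larger function since that strengthens the hypothesis on $f$. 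The device used in this paper (proof of Theorem \ref{ingh-hei-variant}) is different and purely Fourier-side: first prove the result under the auxiliary assumption $\Theta(t)\geq c\,t^{-1/2}$, where all sums converge; then in the general case convolve $f$ with the explicitly constructed compactly supported radial function $F$ of Theorem \ref{construct}, whose Fourier transform decays like $e^{-2\Psi(\sqrt{H(\lambda)})\sqrt{H(\lambda)}}$ with $\Psi(y)=(1+|y|)^{-1/2}$, so that $h=f\ast F$ satisfies the decay with $\Theta+\Psi\geq c\,t^{-1/2}$ and still vanishes on a smaller open set; finally let suitable dilates $F_\varepsilon$ run through an approximate identity to recover $f=0$. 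You would need to incorporate some such mechanism for your sketch to close.
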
  Comparing the decay condition \eqref{fdecay} and \eqref{fdecay-strong}, it is not difficult to see that the Theorem \ref{ingh-hei} is a significant improvement of the Theorem \ref{ingh-hei-1} in terms of the Ingham type decay condition. However, we believe that the necessary part of the Theorem \ref{ingh-hei} is true under the weaker hypothesis on the function as in the Theorem \ref{ingh-hei-1}. In what follows, we shed more light on the difficulties in this regard.     

The sufficiency part of Theorem \ref{ingh-hei} is proved in Section 4.1 by explicitly constructing compactly supported functions whose Fourier transforms satisfy the stated decay condition. Though at present we are not able to prove the necessary part of the theorem under the assumption that $ f $ vanishes on an open set, a slightly different version can be proved. Recall that the Fourier transform $ \widehat{f} $ is defined by integrating $ f $ against the Schr\"odinger representations $ \pi_\lambda$:
$$ \widehat{f}(\lambda) = \int_{\He^n} f(z,t) \pi_\lambda(z,t) dz\, dt .$$
Since $ \pi_\lambda(z,t) = e^{i\lambda t} \, \pi_\lambda(z,0) $, it follows that $ \widehat{f}(\lambda) = \pi_\lambda(f^\lambda), $ where $ f^\lambda(z) $ is the inverse Fourier transform of $ f(z,t) $ in the central variable and 
$$ W_\lambda(f^\lambda) = \int_{\C^n} f^\lambda(z) \pi_\lambda(z,0) dz $$ 
is the Weyl  transform of $ f^\lambda$. With these notations we prove the following improvement on the necessary part of Theorem \ref{ingh-hei}.

\begin{thm}\label{ingh-hei-variant}
		Let $ \Theta(\lambda) $ be  a nonnegative  function on $ [0,\infty)$ such that it  decreases to zero when $ \lambda \rightarrow \infty $, and satisfies the condition $  \int_1^\infty \Theta(t) t^{-1} dt = \infty.$ Let  $ f $ be an integrable function on $ \He^n$ whose  Fourier transform $ \widehat{f} $ satisfies  the estimate 
		\begin{equation}
			\label{fdecay-1} \widehat{f}(\lambda)^\ast \widehat{f}(\lambda) \leq C  e^{-2\Theta(\sqrt{H(\lambda)})\sqrt{H(\lambda)}},~\lambda\neq 0. 
		\end{equation}
If for every $ \lambda \neq 0,$ there exists an open set $ U_\lambda \subset \C^n $ on which $ f^\lambda $ vanishes, then $ f =0.$
	\end{thm}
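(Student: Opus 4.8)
The plan is to fix $\lambda\neq0$ and reduce the assertion to a quasi-analyticity statement for the special Hermite operator $L_\lambda$ on $\C^n$, to which the analogue of Chernoff's theorem for the family of special Hermite operators (proved elsewhere in the paper) applies. Since $\widehat f(\lambda)=W_\lambda(f^\lambda)$ and $f$ is recovered from the family $\{f^\lambda\}_{\lambda\neq0}$ by inverting the Fourier transform in the central variable, it suffices to prove $f^\lambda=0$ for almost every $\lambda\neq0$. Fix such a $\lambda$ and write $g:=f^\lambda\in L^1(\C^n)$, $W:=W_\lambda(g)=\widehat f(\lambda)$; by hypothesis $g$ vanishes on a nonempty open set $U_\lambda\subset\C^n$ and $W^\ast W\le C\,e^{-2\Theta(\sqrt{H(\lambda)})\sqrt{H(\lambda)}}$.

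First I would convert the operator estimate into a bound on the iterates $L_\lambda^m g$. Using the intertwining relation $W_\lambda(L_\lambda h)=W_\lambda(h)H(\lambda)$ (for the appropriate normalization of $L_\lambda$), the Weyl--Plancherel identity $\|W_\lambda(h)\|_{HS}^2=c_\lambda\|h\|_{L^2(\C^n)}^2$, and cyclicity and positivity of the trace, one gets for every $m\ge1$
\[
\|L_\lambda^m g\|_{L^2(\C^n)}^2=c_\lambda\,\|W H(\lambda)^m\|_{HS}^2=c_\lambda\,\mathrm{tr}\!\big(H(\lambda)^{2m}W^\ast W\big)\le c_\lambda C\,\mathrm{tr}\!\big(H(\lambda)^{2m}e^{-2\Theta(\sqrt{H(\lambda)})\sqrt{H(\lambda)}}\big).
\]
Since $H(\lambda)$ has eigenvalues $(2k+n)|\lambda|$, $k\ge0$, with multiplicities $d(n,k)$ of order $k^{n-1}$, the last quantity equals $c_\lambda C\sum_{k\ge0}d(n,k)\big((2k+n)|\lambda|\big)^{2m}e^{-2\Theta(\sqrt{(2k+n)|\lambda|})\sqrt{(2k+n)|\lambda|}}$. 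Where needed, this computation is first carried out for the twisted-convolution regularizations $g\times_\lambda\varphi_\varepsilon$ (with $\varphi_\varepsilon$ a smooth compactly supported approximate identity for the $\lambda$-twisted convolution), which still vanish on an open subset of $U_\lambda$ and converge to $g$, so that all operators in sight are Hilbert--Schmidt; set $M_m:=\|L_\lambda^m g\|_{L^2(\C^n)}$.

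The crux is then the real-variable implication
\[
\int_1^\infty \Theta(t)\,t^{-1}\,dt=\infty\quad\Longrightarrow\quad\sum_{m\ge1}M_m^{-1/2m}=\infty,
\]
i.e.\ that $g$ is a quasi-analytic vector for $L_\lambda$. This follows by estimating the trace sum above --- grouping the eigenvalues into dyadic blocks and using that $\Theta$ is decreasing --- so as to pin down the growth of $M_m$; the passage from $\int_1^\infty\Theta(t)t^{-1}dt$ to the Carleman sum $\sum_m M_m^{-1/2m}$ is essentially the same elementary estimate that underlies the construction in Section 4.1, run in the opposite direction. Once $g$ is known to be a quasi-analytic vector for $L_\lambda$ vanishing on the nonempty open set $U_\lambda$, the analogue of Chernoff's theorem for special Hermite operators forces $g\equiv0$; as $\lambda\neq0$ was arbitrary, $f^\lambda=0$ for a.e.\ $\lambda$, and hence $f=0$.

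I expect the main obstacle to be precisely this quasi-analyticity step. The hypothesis \eqref{fdecay-1} controls $g$ a priori only through one of the two gradings of its special Hermite expansion, and for a very irregular admissible $\Theta$ one must still ensure that the iterates $L_\lambda^m g$ are finite (equivalently, that the Hermite-side trace converges) before the Carleman test can even be applied --- this is presumably where the regularization and a careful comparison of $\Theta(r)\,r$ with $\log r$ enter. A secondary, more routine, point is to justify the Weyl--Plancherel and intertwining manipulations for merely integrable $g$.
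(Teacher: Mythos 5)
Your main line of attack is the same as the paper's: pass to $f^\lambda$, use the Weyl--Plancherel identity and the intertwining $W_\lambda(L_\lambda h)=W_\lambda(h)H(\lambda)$ to turn the hypothesis \eqref{fdecay-1} into a bound on $\|L_\lambda^m f^\lambda\|_2$ via the trace sum $\sum_k k^{n-1}\big((2k+n)|\lambda|\big)^{2m}e^{-2\Theta(\sqrt{(2k+n)|\lambda|})\sqrt{(2k+n)|\lambda|}}$, verify the Carleman condition from $\int_1^\infty\Theta(t)t^{-1}dt=\infty$, and invoke the Chernoff-type theorem for $L_\lambda$. That part is sound, and the paper carries it out by splitting the sum at $(2k+n)|\lambda|=m^8$ and using Stirling to get $\|L_\lambda^m f^\lambda\|_2^2\le C|\lambda|\big(2m/\Theta(m^4)\big)^{4m}$, whence $\sum_m\Theta(m^4)/m=\infty$ gives the Carleman condition.

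The genuine gap is exactly at the point you flag but do not resolve: for a general admissible $\Theta$ the trace sum can be \emph{infinite}. A decreasing $\Theta\to0$ with $\int_1^\infty\Theta(t)t^{-1}dt=\infty$ may still satisfy $\liminf_{v\to\infty}v\,\Theta(v)<\infty$ (take $\Theta$ constant equal to $2^{-k}$ on $[a_k,a_{k+1})$ with $a_{k+1}\ge e^{2^k a_k}$); along such a subsequence $e^{-2\Theta(\sqrt{u})\sqrt{u}}$ is bounded below, the terms of the trace sum blow up, and the hypothesis gives no control whatsoever on $\|L_\lambda^m f^\lambda\|_2$ --- one cannot even assert these iterates lie in $L^2$. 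Your proposed fix, regularizing by twisted convolution with a generic smooth compactly supported approximate identity $\varphi_\varepsilon$, does not repair this: after the estimate one is left with $\mathrm{tr}\big(H(\lambda)^{2m}e^{-2\Theta(\sqrt{H(\lambda)})\sqrt{H(\lambda)}}\,W_\lambda(\varphi_\varepsilon)^\ast W_\lambda(\varphi_\varepsilon)\big)$, and to make that sum behave one needs a \emph{quantitative} Ingham-type decay $W_\lambda(\varphi_\varepsilon)^\ast W_\lambda(\varphi_\varepsilon)\le e^{-2\Psi(\sqrt{H(\lambda)})\sqrt{H(\lambda)}}$ for the regularizer --- which an arbitrary bump function is not known to possess, and which is precisely the content of the constructive (sufficiency) half of the theorem. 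The paper's resolution is a bootstrap: it first proves the result under the extra assumption $\Theta(t)\ge c\,t^{-1/2}$ (which kills the tail $I_2$ of the trace sum), and then, in the general case, convolves $f$ on $\He^n$ with the explicitly constructed compactly supported radial function $F$ of Theorem \ref{compute} associated with $\Psi(y)=(1+|y|)^{-1/2}$; this upgrades the decay to $\Theta+\Psi\ge c\,y^{-1/2}$ without affecting either the divergence of $\int(\Theta+\Psi)(t)t^{-1}dt$ or the local vanishing of $(f\ast F)^\lambda$, yields $f\ast F=0$, and an approximate-identity argument with the dilates $F_\varepsilon$ then forces $f=0$. Without this use of the sufficiency construction (or an equivalent device), your argument does not go through for all $\Theta$ permitted by the hypothesis.
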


\begin{rem} Note that when $ f $ is compactly supported the function $ f^\lambda $ is also compactly supported and hence vanishes on an open set. The same is true if we assume that $ f $ is supported on a cylindrical set $ \{ z \in \C^n: |z| < a \} \times \R.$ As $ \widehat{f}(\lambda) = \pi_\lambda(f^\lambda) $, the above can be considered as a result for the Weyl transform of functions on $ \C^n.$
\end{rem}

	Theorem \ref{ingh} was proved in \cite{I} by Ingham by making use of  Denjoy-Carleman theorem on quasi-analytic functions. In \cite{BRS}, the authors have used Radon transform and a several variable extension of Denjoy-Carleman theorem due to Bochner and Taylor \cite{BT} in order to prove the $n$-dimensional version of Theorem \ref{ingh}. An $ L^2 $  variant of the result of Bochner-Taylor which was proved by Chernoff in \cite{CH2} has turned out to be very useful in establishing Ingham type theorems.  
	
	\begin{thm} \cite[Chernoff]{CH2}
		\label{ch-euc}
		Let $f$ be a smooth function on $\mathbb{R}^n.$ Assume that $\Delta^mf\in L^2(\mathbb{R}^n)$ for all $m\in \mathbb{N}$ and that $\sum_{m=1}^{\infty}\|\Delta_{\mathbb{R}^n}^mf\|_2^{-\frac{1}{2m}}=\infty.$ If $f$ and all its partial derivatives   vanish at $0$, then $f$ is identically zero.
	\end{thm}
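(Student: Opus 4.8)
The plan is to reduce this $n$-dimensional $L^2$ statement to the classical one-dimensional Denjoy--Carleman quasi-analyticity theorem by slicing $f$ along rays through the origin. First I would pass to the Fourier side: as $f\in L^2(\R^n)$ (the case $m=0$), Plancherel gives $a_m:=\|\Delta^m f\|_2=\||\xi|^{2m}\widehat f\|_2$, and I set $M_k:=\||\xi|^k\widehat f\|_2$ for every integer $k\ge 0$, so that $M_{2m}=a_m$. By Cauchy--Schwarz the sequence $(M_k)$ is log-convex; in particular all the $M_k$, including the odd-indexed ones, are finite since $M_{2m+1}\le M_{2m}^{1/2}M_{2m+2}^{1/2}$. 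The hypothesis $\sum_m a_m^{-1/(2m)}=\infty$ is precisely the divergence of the even-indexed subseries of $\sum_k M_k^{-1/k}$, so a fortiori $\sum_k M_k^{-1/k}=\infty$.

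Next I would convert these $L^2$ bounds into pointwise control of all derivatives of $f$. With $s=\lfloor n/2\rfloor+1$, the Sobolev embedding $H^s(\R^n)\hookrightarrow L^\infty(\R^n)$ applied to $\partial^\alpha f$ gives $\|\partial^\alpha f\|_\infty\le C\sum_{|\beta|\le s}\|\partial^{\alpha+\beta}f\|_2$, and since $\|\partial^\gamma f\|_2=\|\xi^\gamma\widehat f\|_2\le M_{|\gamma|}$, log-convexity bounds the right side by $C'(M_{|\alpha|}+M_{|\alpha|+s})$. For a fixed unit vector $\omega\in S^{n-1}$ I introduce the one-variable function $\phi_\omega(r):=f(r\omega)$, whose derivatives are $\phi_\omega^{(k)}(r)=\sum_{|\alpha|=k}\frac{k!}{\alpha!}\,\omega^\alpha(\partial^\alpha f)(r\omega)$. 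Using $|\omega^\alpha|\le 1$ and $\sum_{|\alpha|=k}k!/\alpha!=n^k$ together with the previous estimate, I obtain $\sup_r|\phi_\omega^{(k)}(r)|\le N_k:=C''\,n^k(M_k+M_{k+s})$, uniformly in $\omega$.

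The final step feeds this into the one-dimensional theory. Since $\partial^\alpha f(0)=0$ for every $\alpha$, we have $\phi_\omega^{(k)}(0)=0$ for all $k$, so the classical Denjoy--Carleman theorem forces $\phi_\omega\equiv 0$ as soon as $\sum_k N_k^{-1/k}=\infty$. Because a log-convex real sequence is eventually monotone, for large $k$ the majorant $N_k$ is comparable to either $n^kM_k$ or $n^kM_{k+s}$, each of which is itself log-convex in $k$; thus $(N_k)$ may be replaced by a log-convex majorizing sequence, the form required by the theory. Divergence of $\sum_k N_k^{-1/k}$ is then inherited from $\sum_k M_k^{-1/k}=\infty$ by the standard invariance of the Carleman divergence condition for log-convex sequences under the index shift $M_k\mapsto M_{k+s}$ and the geometric reweighting $M_k\mapsto n^kM_k$, both of which leave the associated quasi-analytic class unchanged. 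Applying the theorem for each $\omega$ yields $f(r\omega)=0$ for all $r$ and all $\omega\in S^{n-1}$, i.e. $f\equiv 0$.

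I expect the main obstacle to be the bookkeeping of this last step: verifying that the geometric factors $n^k$, the polynomial multiplicity of the multi-index sums, and above all the index shift by $s$ do not destroy $\sum_k N_k^{-1/k}=\infty$. This is exactly where log-convexity of $(M_k)$ is indispensable, and it is cleanest to invoke the classical invariance properties of Carleman classes rather than to estimate $N_k^{-1/k}$ by hand. The only genuinely $n$-dimensional input is the Sobolev step, which transfers the $L^2$ hypothesis on $\Delta^m f$ into the uniform pointwise bounds on directional derivatives needed to run the one-variable argument.
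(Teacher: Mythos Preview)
The paper does not prove this theorem; it is quoted from Chernoff~\cite{CH2} as a known result and serves only as motivation for the paper's own analogue, Theorem~\ref{ch-L-strong}. So there is no proof in the paper to compare your attempt against directly.

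Your argument is correct in outline and follows a classical route: pass to the Fourier side to get the log-convex moment sequence $M_k=\||\xi|^k\widehat f\|_2$, use Sobolev embedding to convert these $L^2$ bounds into uniform pointwise bounds on all partial derivatives, slice along rays, and apply one-dimensional Denjoy--Carleman. The step you flag as delicate---stability of the divergence condition $\sum_k M_k^{-1/k}=\infty$ under the geometric reweighting $M_k\mapsto n^kM_k$ and the finite index shift $M_k\mapsto M_{k+s}$---is indeed standard; the cleanest justification is via the equivalence, for log-convex sequences, of $\sum_k M_k^{-1/k}=\infty$ with the ratio condition $\sum_k M_{k-1}/M_k=\infty$, which is manifestly shift-invariant. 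One small point: you silently use $f\in L^2$ (the case $m=0$), which is not literally contained in ``$m\in\mathbb{N}$''; the paper's own formulation of the analogue (Theorem~\ref{ch-L-strong}) explicitly assumes $m\ge 0$, so this is harmless.

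Your approach differs both from Chernoff's original proof, which is operator-theoretic (viewing $f$ as a quasi-analytic vector for the self-adjoint operator $\Delta$ on $L^2$), and from the strategy the paper employs for Theorem~\ref{ch-L-strong}, which expands $f$ in bigraded spherical harmonics, applies a Hecke--Bochner identity to the special Hermite projections, and reduces to a Chernoff-type theorem for Laguerre operators on the radial coefficients. Your ray-slicing argument is more elementary and self-contained for the Euclidean Laplacian, but it relies on ordinary translation invariance and does not transfer directly to $L_\lambda$; this explains why the paper chooses the spherical-harmonic decomposition for its main result.
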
 
As the Laplacian is translation invariant, $0$ can be replaced by any other point in the above theorem. As a matter of fact, this theorem shows how partial differential operators generate the class of quasi-analytic functions. Recently,  Bhowmik-Pusti-Ray \cite{BPR} have established an analogue of Chernoff's theorem for the Laplace-Beltrami operators on non-compact Riemannian symmetric spaces and use the same in proving a version of Ingham's theorem for the Helgason Fourier transform. 

In the context of the Heisenberg group, we prove Theorem \ref{ingh-hei-variant}, and hence Theorem \ref{ingh-hei}, by using   the following analogue of Chernoff's theorem for the family of special Hermite operators $L_{\lambda}.$ These operators on $ \C^n $  are defined via the relation  $\mathcal{L}(f(z)e^{i\lambda t})=e^{i\lambda t}L_{\lambda}f(z) $ where $ \mathcal{L} $ is the sublaplacian on $ \He^n.$ Observe that when $ \lambda = 0, $ the special Hermite operator $ L_\lambda $ reduces to $ \Delta $ on $ \C^n.$

\begin{thm}
	\label{ch-L-strong}
	For any fixed $ \lambda \in \R, $ let $f\in C^{\infty}(\mathbb{C}^n)$ be such that $L_{\lambda}^mf\in L^2(\mathbb{C}^n)$ for all $m\geq0$ and  that $\sum_{m=1}^{\infty}\|L_{\lambda}^mf\|_2^{-\frac{1}{2m}}=\infty.$ If  $f$ and all its partial derivatives vanish at some $w\in \mathbb{C}^n$, then $f$ is identically zero.
\end{thm}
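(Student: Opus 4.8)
The plan is to reduce Theorem~\ref{ch-L-strong} to the Euclidean Chernoff theorem, Theorem~\ref{ch-euc}, on $\mathbb{R}^{2n}$, by splitting $f$ along the bi-graded spherical harmonics. First some preliminary reductions. If $\lambda=0$ then $L_0$ is (up to a sign) the Laplacian on $\mathbb{C}^n$ and the assertion is exactly Theorem~\ref{ch-euc}, so we may assume $\lambda\neq0$. Since the $\lambda$-twisted translations are unitary on $L^2(\mathbb{C}^n)$, commute with $L_\lambda$, and carry a function flat at $w$ to one flat at $0$ (all three being standard facts about the twisted convolution structure), we may also assume $w=0$.

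Now write $L^2(\mathbb{C}^n)=\bigoplus_{p,q\ge0}\mathcal{M}_{p,q}$ for the orthogonal decomposition into $U(n)$-isotypic subspaces, $\mathcal{M}_{p,q}$ being the closed span of the functions $h(|z|^2)P(z)$ with $P$ a solid bi-graded harmonic of bidegree $(p,q)$. From the defining relation one computes $L_\lambda=-\Delta_{\mathbb{C}^n}+\tfrac{\lambda^2}{4}|z|^2+i\lambda N$, where $N=\sum_j(x_j\partial_{y_j}-y_j\partial_{x_j})$ generates the rotations $z\mapsto e^{i\theta}z$; since $N$ acts on $\mathcal{M}_{p,q}$ as the scalar $i(p-q)$, the operator $L_\lambda$ preserves $\mathcal{M}_{p,q}$ and on it equals $H_\lambda+c_{p,q}$, where $H_\lambda:=-\Delta_{\mathbb{C}^n}+\tfrac{\lambda^2}{4}|z|^2$ is the scaled harmonic oscillator and $c_{p,q}\in\mathbb{R}$ with $|c_{p,q}|\le|\lambda|(p+q)$. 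Writing $f=\sum_{p,q}f_{p,q}$ with $f_{p,q}\in\mathcal{M}_{p,q}$, each $f_{p,q}$ is smooth (isotypic projection preserves smoothness), is flat at $0$ (being the isotypic projection of a function flat at $0$), lies in $\mathrm{Dom}(L_\lambda^m)$ for all $m$, and satisfies $\|L_\lambda^m f_{p,q}\|_2\le\|L_\lambda^m f\|_2$ by orthogonality, so that $\sum_m\|L_\lambda^m f_{p,q}\|_2^{-1/2m}=\infty$. It therefore suffices to prove $f_{p,q}=0$ for each fixed $(p,q)$.

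Fix $(p,q)$ and put $g:=f_{p,q}$. On $\mathcal{M}_{p,q}$ the operator $L_\lambda=H_\lambda+c_{p,q}$ is positive and self-adjoint, so $g\in\mathrm{Dom}(H_\lambda^m)$ for every $m$; expanding $(I+H_\lambda)^m=(L_\lambda+1-c_{p,q})^m$ on $\mathcal{M}_{p,q}$ by the binomial theorem and using the interpolation inequality $\|L_\lambda^j g\|_2\le\|L_\lambda^m g\|_2^{j/m}\|g\|_2^{1-j/m}$ for $0\le j\le m$, one obtains
\[
\|(I+H_\lambda)^m g\|_2\le\bigl(1+|1-c_{p,q}|\bigr)^m\max\bigl(\|g\|_2,\ \|L_\lambda^m g\|_2\bigr).
\]
Combining this with the classical Hermite--Sobolev estimate $\|\Delta_{\mathbb{C}^n}^m h\|_2\le C_0^{\,m}\|(I+H_\lambda)^m h\|_2$, valid with $C_0$ independent of $m$ (because each $\partial_j$, written through the creation and annihilation operators of $H_\lambda$, maps the Hermite--Sobolev space of order $k$ to that of order $k-1$ with norm bounded uniformly in $k$), gives
\[
\|\Delta_{\mathbb{C}^n}^m g\|_2\le C_{p,q}^{\,m}\max\bigl(\|g\|_2,\ \|L_\lambda^m g\|_2\bigr),\qquad m\ge0,
\]
with $C_{p,q}$ independent of $m$. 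In particular $\Delta^m g\in L^2(\mathbb{R}^{2n})$ for all $m$, and after normalising $\|g\|_2=1$ (permissible, as the hypotheses are homogeneous, and assuming $g\neq0$) a short bookkeeping with the series — splitting according to whether $\|L_\lambda^m g\|_2\le1$ or not — shows $\sum_m\|\Delta^m g\|_2^{-1/2m}=\infty$ follows from $\sum_m\|L_\lambda^m g\|_2^{-1/2m}=\infty$. As $g$ is smooth and flat at $0$, Theorem~\ref{ch-euc} on $\mathbb{R}^{2n}$ now forces $g=f_{p,q}=0$; since $(p,q)$ was arbitrary, $f=0$.

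The main obstacle is precisely the point that blocks a naive direct appeal to Theorem~\ref{ch-euc} for $L_\lambda$ on $\mathbb{C}^n$: the inclusion $\mathrm{Dom}(L_\lambda^m)\subseteq\mathrm{Dom}(\Delta^m)$ fails with geometric constants — a function supported on the bottom eigenspace of $L_\lambda$ has $\|L_\lambda^m\cdot\|_2$ growing only geometrically while $\|\Delta^m\cdot\|_2$ can grow arbitrarily fast — so the hypothesis on $\|L_\lambda^m f\|_2$ does not by itself control $\|\Delta^m f\|_2$. The bi-graded decomposition is what removes this difficulty: on each isotypic piece $L_\lambda$ differs from the genuine harmonic oscillator $H_\lambda$ only by a scalar, and $H_\lambda$ does dominate all derivatives, uniformly in $m$. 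The remaining technical points — that isotypic projection preserves both smoothness and flatness at the origin, the uniform-in-$m$ Hermite--Sobolev bound, and the manipulation of the Carleman sum — are routine but should be carried out with care.
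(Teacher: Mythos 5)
Your reductions are sound up to a point: the twisted translation to $w=0$, the decomposition into the $U(n)$--isotypic pieces $\mathcal{M}_{p,q}$, the identity $L_\lambda=H_\lambda+c_{p,q}$ there (with $c_{p,q}=-\lambda(p-q)$), and the transfer of the Carleman condition from $L_\lambda$ to $I+H_\lambda$ via the binomial expansion and interpolation are all correct. The proof breaks at the step you call the ``classical Hermite--Sobolev estimate'': the inequality $\|\Delta_{\mathbb{C}^n}^m h\|_2\le C_0^m\|(I+H_\lambda)^m h\|_2$ with $C_0$ independent of $m$ is false. Test it on the ground state $h(z)=e^{-|\lambda||z|^2/4}$ of $H_\lambda$: then $\|(I+H_\lambda)^mh\|_2=(1+n|\lambda|)^m\|h\|_2$, while by Plancherel $\|\Delta^mh\|_2^2=c\int_{\mathbb{R}^{2n}}|\xi|^{4m}e^{-|\xi|^2/|\lambda|}\,d\xi\sim C^{2m}\Gamma(2m+n)$, so $\|\Delta^mh\|_2$ grows like $(Cm)^m$. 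The supporting claim that each $\partial_j$, written via creation and annihilation operators, maps the Hermite--Sobolev space of order $k$ to that of order $k-1$ with norm bounded uniformly in $k$ fails for the same reason: the creation operator carries the $\mu$-eigenspace to the $(\mu+2|\lambda|)$-eigenspace with norm $\sim\sqrt{\mu}$, and near the bottom of the spectrum the factor $\bigl((1+\mu+2|\lambda|)/(1+\mu)\bigr)^{(k-1)/2}$ is of size $\rho^{k}$ with $\rho>1$. This loss is fatal to the reduction: with the true bound $\|\Delta^mg\|_2\lesssim (Cm)^m\max(\|g\|_2,\|L_\lambda^mg\|_2)$ one only gets $\|\Delta^mg\|_2^{-1/2m}\gtrsim m^{-1/2}\,\|L_\lambda^mg\|_2^{-1/2m}$, and divergence of $\sum_m\|L_\lambda^mg\|_2^{-1/2m}$ does not survive multiplication by $m^{-1/2}$ (take $\|L_\lambda^mg\|_2=m^{1.8m}$, for which the first series diverges and the second converges). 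Note that your own closing remark --- that a lowest-eigenspace function has super-geometric $\|\Delta^m\cdot\|_2$ --- applies verbatim to $H_\lambda$, not only to $L_\lambda$, so passing from $L_\lambda$ to $H_\lambda$ does not remove the obstruction you correctly identified.

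The paper sidesteps this entirely. After the same two reductions it uses the Hecke--Bochner identity for special Hermite projections to identify the action of $L$ on each radial profile $f^j_{p,q}$ with the Laguerre operator $L_{\delta(p,q)}$ of type $\delta(p,q)=n+p+q-1$; the Carleman condition transfers with only a fixed geometric factor $a_{p,q}^{2m}$, and the conclusion is drawn from a Chernoff theorem for Laguerre operators (Theorem \ref{chernoff-laguerre}), not from the Euclidean Theorem \ref{ch-euc}. If you wish to salvage your route, you must replace the appeal to Theorem \ref{ch-euc} for $\Delta$ by a Chernoff theorem for the Hermite operator $H_\lambda$ itself (which is essentially what Theorem \ref{chernoff-laguerre} provides in radial form); the passage from $H_\lambda$-Sobolev norms to $\Delta$-Sobolev norms is precisely the step that cannot be made with constants of the form $C_0^m$.
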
  When $ \lambda = 0 $, the above is just Chernoff's theorem for the Laplacian on $ \C^n $. For $\lambda=1$, a weaker version of the theorem, namely under the assumption that $ f $ vanishes on an open set,   has been proved in \cite[Theorem 4.1]{GT-adv}.  The weaker version is  in fact   good enough to prove Theorems \ref{ingh-hei-variant} and  \ref{ingh-hei}. However, in this paper, we prove the above  improvement which is the exact  analogue of Theorem \ref{ch-euc} for the special Hermite operators and  the second main result of this article.\\

	We conclude the introduction by briefly describing the organization of the paper. After recalling the required preliminaries regarding harmonic analysis on Heisenberg group  in Section 2, we prove an analogue of Chernoff's theorem for the special Hermite operators (Theorem \ref{ch-L-strong})  in Section 3. In section 4, we prove the Ingham's theorems on the Heisenberg group,  namely Theorems \ref{ingh-hei}, and \ref{ingh-hei-variant}.   
	
	\section{Preliminaries on Heisenberg groups}
	In this section, we collect the results which are necessary for the study of uncertainty principles for the Fourier transform on the Heisenberg group. We refer the reader to the  two classical books Folland \cite{F} and Taylor \cite{Taylor} for the preliminaries of harmonic analysis on the Heisenberg group. However, we will be closely following the notations of  the books of Thangavelu  \cite{TH2} and \cite{TH3}.  
	\subsection{ Heisenberg group and Fourier transform}
	Let $\mathbb{H}^n:=\mathbb{C}^n\times\mathbb{R}$ denote the $(2n+1)$-dimensional  Heisenberg group equipped with the group law 
	$$(z, t).(w, s):=\big(z+w, t+s+\frac{1}{2}\Im(z.\bar{w})\big),\ \forall (z,t),(w,s)\in \mathbb{H}^n.$$ This is a step two nilpotent Lie group where the Lebesgue measure $dzdt$ on $\mathbb{C}^n\times\mathbb{R}$ serves as the Haar measure. The representation theory of $\mathbb{H}^n$ is well-studied in the literature. In order to define Fourier transform, we use the Schr\"odinger representations as described below.  
	
	For each non-zero real number $ \lambda $, we have an infinite dimensional representation $ \pi_\lambda $ realised on the Hilbert space $ L^2( \R^n).$ These are explicitly given by
	$$ \pi_\lambda(z,t) \varphi(\xi) = e^{i\lambda t} e^{i\lambda(x \cdot \xi+ \frac{1}{2}x \cdot y)}\varphi(\xi+y),\,\,\,$$
	where $ z = x+iy $ and $ \varphi \in L^2(\R^n).$ These representations are known to be  unitary and irreducible. Moreover, by a theorem of Stone and Von-Neumann (see e.g., \cite{F}), these account, upto unitary equivalence, for all the infinite dimensional irreducible unitary representations of $ \mathbb{H}^n $ which act as $e^{i\lambda t}I,~\lambda\neq0$, on the center. Also, there is another class of one dimensional irreducible representations that corresponds to the case $\lambda=0.$ As they  do not contribute to the Plancherel measure  we will not describe them here.
	
	The Fourier transform of a function $ f \in L^1(\mathbb{H}^n) $ is the operator valued function obtained by integrating $ f $ against $ \pi_\lambda$:
	$$ \hat{f}(\lambda) = \int_{\mathbb{H}^n} f(z,t) \pi_\lambda(z,t)  dz dt .$$  Note that $ \hat{f}(\lambda) $ is a bounded linear operator on $ L^2(\R^n).$ 
	Now, by definition of $\pi_{\lambda}$ and $\hat{f}(\lambda)$, it is easy to see that 
	$$\widehat{f}(\lambda)=\int_{\C^n}f^{\lambda}(z)\pi_{\lambda}(z,0)dz, $$ where 
	$f^{\lambda}$ stands for the inverse Fourier transform of $f$ in the central variable:
	$$f^{\lambda}(z):=\int_{-\infty}^{\infty}e^{i\lambda t}f(z,t)dt.$$ This motivates the following definition. Given a function $g$ on $\C^n$, we consider the following   operator  defined by
	$$ W_{\lambda}(g):=\int_{\C^n}g(z)\pi_{\lambda}(z,0)dz.
	$$ With these notations, we note that  $\hat{f}(\lambda)=W_{\lambda}(f^{\lambda}).$  These transforms are called the Weyl transforms and for  $\lambda=1$, they are simply denoted by $ W(g) $ instead of $W_1(g).$ We have the following Plancherel formula for the Weyl transforms  (See \cite[2.2.9, Page no-49]{TH3})
	\begin{equation}
		\label{wplan}
		\|W_{\lambda}(g)\|^2_{HS}|\lambda|^n=(2\pi)^n\|g\|_2^2, ~g\in L^2(\mathbb{C}^n).
	\end{equation} 
	This, in view of the relation between the group Fourier transform and the Weyl transform, proves that when $ f \in L^1 \cap L^2(\mathbb{H}^n) $, its Fourier transform  is actually a Hilbert-Schmidt operator and one has
	$$ \int_{\mathbb{H}^n} |f(z,t)|^2 dz dt = (2\pi)^{-(n+1)}\int_{-\infty}^\infty \|\widehat{f}(\lambda)\|_{HS}^2 |\lambda|^n d\lambda,  $$
	where $\|.\|_{HS}$ denotes the Hilbert-Schmidt norm. 
	The above allows us to extend  the Fourier transform as a unitary operator between $ L^2(\mathbb{H}^n) $ and the Hilbert space of Hilbert-Schmidt operator valued functions  on $ \R $ which are square integrable with respect to the Plancherel measure  $ d\mu(\lambda) = (2\pi)^{-n-1} |\lambda|^n d\lambda.$ We polarize the above identity to obtain 
	$$\int_{\He^n}f(z,t)\overline{g(z,t)}dzdt=\int_{-\infty}^{\infty}tr(\widehat{f}(\lambda)\widehat{g}(\lambda)^*)~d\mu(\lambda).$$ Also for suitable functions $f$ on $\He^n$ we have the inversion formula
	$$f(z,t)=\int_{-\infty}^{\infty}tr(\pi_{\lambda}(z,t)^*\widehat{f}(\lambda))d\mu(\lambda).$$
	Moreover, the Fourier transform behaves well with the convolution of two functions defined by $$f\ast g(x):=\int_{\He^n}f(xy^{-1})g(y)dy.$$ In fact, for any $f,g\in L^1(\mathbb{H}^n)$, it follows from the definition that $$\widehat{f \ast g}(\lambda)=\hat{f}(\lambda)\hat{g}(\lambda).$$  In the following subsection, we describe the role of special functions in the harmonic analysis on $\mathbb{H}^n$ and show that the group Fourier transform of a suitable class of functions take a nice form. 
	\subsection{Special functions and Fourier transform}
	For each $\lambda\neq0$, we consider the following  family of scaled Hermite functions indexed by $\alpha\in\mathbb{N}^n$: $$\Phi_\alpha^{\lambda}(x):=|\lambda|^{\frac{n}{4}}\Phi_\alpha(\sqrt{|\lambda| }x),~x\in\mathbb{R}^n, $$ 
	where $\Phi_\alpha$ denote the $n-$dimensional Hermite functions (see \cite{TH1}). It is well-known that these  scaled functions $\Phi_\alpha^{\lambda}$ are eigenfunctions of the scaled Hermite operator $H(\lambda):=-\Delta_{\R^n}+\lambda^2|x|^2$ with eigenvalue $(2|\alpha|+n)|\lambda|$ and $\{\Phi_\alpha^{\lambda}:\alpha\in\mathbb{N}^n\}$ forms an orthonormal basis for $L^2(\R^n)$. As a consequence, 
	$$\|\widehat{f}(\lambda)\|_{HS}^2=\sum_{\alpha \in \mathbb{N}^n}\|\widehat{f}(\lambda)\Phi_\alpha^{\lambda}\|_2^2.$$ In view of this, the Plancheral formula takes the following very useful form 
	$$\int_{\mathbb{H}^n} |f(z,t)|^2 dz dt =  \int_{-\infty}^\infty \sum_{\alpha \in \mathbb{N}^n}\|\widehat{f}(\lambda)\Phi_\alpha^{\lambda}\|_2^2 \  d\mu( \lambda) . $$

	Given $\sigma\in U(n)$, we define $R_{\sigma}f(z,t)=f(\sigma.z,t)$. We say that a function $f$ on $\He^n$ is radial if it  is invariant under the action of $U(n)$ i.e., $R_{\sigma}f=f$ for all $\sigma\in U(n).$ The Fourier transforms of such radial integrable functions are  functions of the Hermite operator $ H(\lambda).$ 
	In fact, if  $ H(\lambda) = \sum_{k=0}^\infty (2k+n)|\lambda| P_k(\lambda)$ is the spectral decomposition of this operator, then for a radial intrgrable function $f$  we have
	$$ \widehat{f}(\lambda)  = \sum_{k=0}^\infty  R_k(\lambda, f) P_k(\lambda).$$ Here, $P_k(\lambda)$ stands for the orthogonal projection of $L^2(\mathbb{R}^n)$ onto the $k^{th}$ eigenspace spanned by scaled Hermite functions $\Phi^{\lambda}_{\alpha}$ with $|\alpha|=k$. The coefficients $ R_k(\lambda,f) $ are given by
	\begin{equation}
		R_k(\lambda,f)  =  \frac{k!(n-1)!}{(k+n-1)!} \int_{\C^n}  f^{\lambda }(z) \varphi^{n-1}_{k,\lambda}(z)~dz.
	\end{equation}
In the above formula, $ \varphi_{k,\lambda}^{n-1} $ are the Laguerre functions of type $ (n-1)$:
	$$  \varphi^{n-1}_{k,\lambda}(z)  = L_k^{n-1}(\frac{1}{2}|\lambda||z|^2) e^{-\frac{1}{4}|\lambda||z|^2}, $$ where $L^{n-1}_k$ denotes the Laguerre polynomial of type $(n-1)$. For the purpose of estimating the Fourier transform we need good estimates for the  Laguerre functions $\varphi_{k,\lambda}^{n-1}.$ In order to get such estimates, we use the available sharp estiamtes of standard Laguerre functions as described below in more general context.

	For any $ \delta > -1 $, let $ L_k^\delta(r) $ denote the Laguerre polynomials of type $ \delta$.  The standard Laguerre functions are defined by 
	$$
	\mathcal{L}_k^{\delta}(r)=\Big(\frac{\Gamma(k+1)\Gamma(\delta+1)}{\Gamma(k+\delta+1)}\Big)^{\frac12}L_k^{\delta}(r)e^{-\frac12r}r^{\delta/2}
	$$
	which  form an orthonormal system in $L^2((0,\infty),dr)$. In terms of $\mathcal{L}_k^{\delta}(r),$ we have
	$$
	\varphi_k^{\delta}(r)=2^{\delta}\Big(\frac{\Gamma(k+1)\Gamma(\delta+1)}{\Gamma(k+\delta+1)}\Big)^{-\frac12}r^{-\delta}\mathcal{L}_k^{\delta}\Big(\frac12r^2\Big).
	$$
	Asymptotic properties of $\mathcal{L}_k^{\delta}(r)$ are well-known in the literature, see \cite[Lemma 1.5.3]{TH1}. The estimates in \cite[Lemma 1.5.3]{TH1}   are sharp, see \cite[Section 2]{M} and \cite[Section 7]{Mu}.
	For our convenience, we restate the result in terms of $ \varphi_{k,\lambda}^{n-1}(r).$
	
	\begin{lem} \label{lem:T}   Let $ \nu(k) = 2(2k+n) $ and  $C_{k,n} = \left(\frac{k!(n-1)!}{(k+n-1)!}\right)^{\frac12}.$ For  $\lambda\neq0, $ we have the estimates
		$$
		C_{k,n}\,\, |\varphi_{k,\lambda}^{n-1}(r)|\le C(r\sqrt{|\lambda|})^{-(n-1)}\begin{cases} (\frac{1}{2}\nu(k)r^2 |\lambda| )^{(n-1)/2}, &0\le r\le \frac{\sqrt{2}}{\sqrt{\nu(k)|\lambda|}}\\
			(\frac{1}{2}\nu(k)r^2 |\lambda|)^{-\frac14}, &\frac{\sqrt{2}}{\sqrt{\nu(k)|\lambda|}}\le r\le \frac{\sqrt{\nu(k)}}{ \sqrt{|\lambda|}}\\
			\nu(k)^{-\frac14}(\nu(k)^{\frac13}+|\nu(k)-\frac{1}{2} |\lambda| r^2|)^{-\frac14}, &\frac{\sqrt{\nu(k)}}{ \sqrt{|\lambda|}}\le r\le \frac{\sqrt{3\nu(k)}}{ \sqrt{|\lambda|}}\\
			e^{-\frac12\gamma r^2 |\lambda| }, & r\ge \frac{\sqrt{3\nu(k)}}{ \sqrt{|\lambda|}},
		\end{cases}
		$$ where $\gamma>0$ is a fixed constant and $C $  is independent of $k$ and $\lambda$. 
	\end{lem}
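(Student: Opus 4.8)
The plan is to deduce Lemma \ref{lem:T} directly from the known sharp estimates for the standard Laguerre functions $\mathcal{L}_k^{n-1}$ recorded in \cite[Lemma 1.5.3]{TH1}, by unwinding the definitions and performing a single change of variables. The only analytic input, the four--region asymptotics of $\mathcal{L}_k^\delta$, is taken as given; what remains is careful bookkeeping with the normalization constants and the region boundaries.

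First I would relate $\varphi_{k,\lambda}^{n-1}$ to $\mathcal{L}_k^{n-1}$. Writing $\delta = n-1$ and setting $s = \tfrac{1}{2}|\lambda| r^2$ for the argument of the Laguerre polynomial, the defining relation
\[
\mathcal{L}_k^{n-1}(s) = C_{k,n}\, L_k^{n-1}(s)\, e^{-s/2}\, s^{(n-1)/2}
\]
gives $L_k^{n-1}(s)\, e^{-s/2} = C_{k,n}^{-1}\, s^{-(n-1)/2}\, \mathcal{L}_k^{n-1}(s)$. Since $\varphi_{k,\lambda}^{n-1}(r) = L_k^{n-1}(\tfrac{1}{2}|\lambda| r^2)\, e^{-\frac{1}{4}|\lambda| r^2}$ and $e^{-s/2} = e^{-\frac{1}{4}|\lambda| r^2}$, this yields the identity
\[
C_{k,n}\,\varphi_{k,\lambda}^{n-1}(r) = s^{-(n-1)/2}\,\mathcal{L}_k^{n-1}(s) = 2^{(n-1)/2}\,(r\sqrt{|\lambda|})^{-(n-1)}\,\mathcal{L}_k^{n-1}\!\big(\tfrac{1}{2}|\lambda| r^2\big),
\]
where in the last step I use $s^{-(n-1)/2} = 2^{(n-1)/2}(r\sqrt{|\lambda|})^{-(n-1)}$. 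The prefactor $(r\sqrt{|\lambda|})^{-(n-1)}$ is already exactly the one appearing in the statement, and the extra $2^{(n-1)/2}$ is a fixed constant to be absorbed into $C$.

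Next I would feed the estimate of \cite[Lemma 1.5.3]{TH1} for $|\mathcal{L}_k^{n-1}(s)|$, expressed through the four regions $0 \le s \le 1/\nu$, $1/\nu \le s \le \nu/2$, $\nu/2 \le s \le 3\nu/2$ and $s \ge 3\nu/2$ with $\nu = 4k+2(n-1)+2 = 2(2k+n) = \nu(k)$, into the displayed identity. The substitution $s = \tfrac{1}{2}|\lambda| r^2$ turns each region in $s$ into the corresponding region in $r$ stated in the lemma: for instance $s \le 1/\nu$ becomes $r \le \sqrt{2}/\sqrt{\nu(k)|\lambda|}$, while $s \ge 3\nu/2$ becomes $r \ge \sqrt{3\nu(k)}/\sqrt{|\lambda|}$, and similarly for the two middle ranges. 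Matching the bounds region by region, so that $(s\nu)^{(n-1)/2}$ becomes $(\tfrac{1}{2}\nu(k) r^2 |\lambda|)^{(n-1)/2}$, $(s\nu)^{-1/4}$ becomes $(\tfrac{1}{2}\nu(k) r^2|\lambda|)^{-1/4}$, $\nu^{-1/4}(\nu^{1/3}+|\nu - s|)^{-1/4}$ becomes $\nu(k)^{-1/4}(\nu(k)^{1/3}+|\nu(k)-\tfrac{1}{2}|\lambda| r^2|)^{-1/4}$, and $e^{-\gamma s}$ becomes $e^{-\frac{1}{2}\gamma r^2|\lambda|}$, reproduces precisely the right--hand side of Lemma \ref{lem:T} after multiplying by the common factor $(r\sqrt{|\lambda|})^{-(n-1)}$.

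Since this is a one--to--one translation of a single known estimate, I do not expect a genuine obstacle; the only care needed is to (i) avoid confusing the two normalizations $\mathcal{L}_k^{n-1}$ (orthonormal) and $\varphi_{k,\lambda}^{n-1}$, which differ precisely by the factor $C_{k,n}^{-1} s^{-(n-1)/2}$ that the lemma is designed to cancel, and (ii) check that the constant $C$ from \cite[Lemma 1.5.3]{TH1}, being independent of $k$, together with the fixed power $2^{(n-1)/2}$ and the fixed $\gamma$, yields a final constant independent of both $k$ and $\lambda$. Equivalently, one may first note the exact scaling $\varphi_{k,\lambda}^{n-1}(r) = \varphi_{k,1}^{n-1}(\sqrt{|\lambda|}\,r)$, reduce to the case $\lambda = 1$, and then apply the change of variables above; this makes the uniformity in $\lambda$ manifest.
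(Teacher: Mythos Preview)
Your proposal is correct and matches the paper's treatment exactly: the paper does not give an independent proof of this lemma but simply states that it is a restatement of \cite[Lemma~1.5.3]{TH1} in terms of $\varphi_{k,\lambda}^{n-1}$, and your argument carries out precisely that translation via the identity $C_{k,n}\,\varphi_{k,\lambda}^{n-1}(r)=2^{(n-1)/2}(r\sqrt{|\lambda|})^{-(n-1)}\mathcal{L}_k^{n-1}(\tfrac{1}{2}|\lambda|r^2)$ and the substitution $s=\tfrac{1}{2}|\lambda|r^2$. The bookkeeping of the four regions and the uniformity of the constant in $k$ and $\lambda$ are handled correctly.
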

\subsection{The sublaplacian and special Hermite operators}
We let $ \mathfrak{h}_n $ stand for the Heisenberg Lie algebra consisting of left invariant vector fields on $ \mathbb{H}^n .$  A  basis for $ \mathfrak{h}_n $ is provided by the $ 2n+1 $ vector fields
$$ X_j = \frac{\partial}{\partial{x_j}}+\frac{1}{2} y_j \frac{\partial}{\partial t}, \,\,Y_j = \frac{\partial}{\partial{y_j}}-\frac{1}{2} x_j \frac{\partial}{\partial t}, \,\, j = 1,2,..., n ,~\text{and}~  T = \frac{\partial}{\partial t}.$$ These correspond to certain one parameter subgroups of $ \mathbb{H}^n.$ The sublaplacian on $\He^n$ is defined by $$\mathcal{L}:=-\sum_{j=1}^{\infty}(X_j^2+Y_j^2) $$ which can be explicitly calculated as
$$\mathcal{L}=-\Delta_{\C^n}-\frac{1}{4}|z|^2\frac{\partial^2}{\partial t^2}+N\frac{\partial}{\partial t},$$ where  $\Delta_{\C^n}$ stands for the Laplacian on $\C^n$ and $N$ is the rotation operator defined by 
$$N=\sum_{j=1}^{n}\left(x_j\frac{\partial}{\partial y_j}-y_j\frac{\partial}{\partial x_j}\right).$$ This is a sub-elliptic operator and homogeneous of degree $2$ with respect to the non-isotropic dilation given by $\delta_r(z,t)=(rz,r^2t).$ The sublaplacian is also invariant under  rotation i.e., $R_{\sigma}\circ \mathcal{L}=\mathcal{L}\circ R_{\sigma},~\sigma\in U(n).$ For each $\lambda\neq0$, special Hermite operator $L_{\lambda}$ is defined via the relation $$\mathcal{L}(e^{i\lambda t}f(z))=e^{i\lambda t}L_{\lambda}f(z).$$ Furthermore, it is not hard to see that  $(\mathcal{L}f)^{\lambda}(z)=L_{\lambda}f^{\lambda}(z).$ It turns out that $ L_\lambda $  is explicitly given by 
$$L_\lambda = -\Delta_{\C^n}+\frac{1}{4} \lambda^2 |z|^2+ i \lambda N.$$ This family of special Hermite operators has a useful translation invariance property coming from the sublaplacian. 

Recall that the sublaplacian $\mathcal{L}$ is invariant under the left translations defined by $\tau_yf(x):=f(y^{-1}x),~x,y\in \He^n.$ In other words, $\tau_{y}(\mathcal{L}f)=\mathcal{L}(\tau_{y}f).$ Now, with $x=(w,0)\in\He^n,$  taking inverse Fourier transform in the central variable gives us
$$(\tau_{x}(\mathcal{L}f))^{\lambda}(z)=L_{\lambda}(\tau_{x}f)^{\lambda}(z)$$ which, after simplification leads to  $$ e^{\frac{i\lambda}{2}\Im(w.\bar{z})}L_{\lambda}f^{\lambda}(z-w)=L_{\lambda} (e^{\frac{i\lambda}{2}\Im(w.\bar{z})}f^{\lambda}(z-w)).$$ This observation in turn implies that the special Hermite operator $L_{\lambda}$ is invariant under the $\lambda$-twisted translation $T^{\lambda}_w, ~w\in \mathbb{C}^n$, defined by
\begin{equation}
	\label{twistedtrans}
	T^{\lambda}_wg(z):=e^{\frac{i\lambda}{2}\Im(w.\bar{z})}g(z-w).
\end{equation}  In other words, \begin{equation}
	\label{twistedinv}
	T^{\lambda}_w(L_{\lambda}g)=L_{\lambda}(T^{\lambda}_wg),~ w\in \mathbb{C}^n.
\end{equation}  It is also known that these $L_{\lambda}$'s are elliptic operators on $ \C^n $ with an explicit spectral decomposition. The spectrum consists of the real numbers of the form $ (2k+n)|\lambda|$,
$k \geq0 $, and the eigenspaces associated to each of these eigenvalues are infinite dimensional. 

In the following, we describe the spectral decomposition for the case when $\lambda=1$. For the sake of simplicity, we write  $L$ instead of $ L_1 $ and $ f\times g $ instead of $ f \ast_1 g.$ Thus, 
$$ f \times g(z) = \int_{\C^n} f(z-w) g(w) e^{\frac{i}{2} \Im (z \cdot \bar{w})} dw.$$
 It is known that (\cite[page no. 58]{TH3}) the special Hermite expansion of  a function $ f \in L^2(\C^n) $ and Parseval's identity reads as
\begin{equation}\label{parseval} f(z)  = (2\pi)^{-n} \sum_{k=0}^\infty f\times \varphi_k^{n-1}(z),\, \,\,  \|f\|_2^2  = (2\pi)^{-n} \sum_{k=0}^\infty  \|f\times \varphi_k^{n-1}\|_2^2
\end{equation}
and each $ f \times \varphi_k^{n-1} $ is an eigenfunction of the operator $ L $ with eigenvalue $ (2k+n).$    Now if $g(z)=g_0(|z|)$ is a radial function on $\mathbb{C}^n$, then $L_{\lambda}g$ takes the form 
$L_{\lambda}g=L_{\lambda,n-1}g_0$ where $L_{\lambda,n-1}$ is the scaled Laguerre operator of type $(n-1)$ given by 
$$L_{\lambda,n-1}:=-\frac{d^2}{dr^2}- \frac{2n-1}{r} \frac{d}{dr} +\frac{1}{4} \lambda^2r^2.$$ In what follows, when $\lambda=1$, we simply denote the radial part of the  special Hermite operator $L_{1,n-1}$ by $L_{n-1}.$ Also, in order to prove Chernoff's theorem for the special Hermite operator, we need to use Laguerre operators of more general type and eigenfunction expansion associated with them. In the following subsection, we develop notations and record required results related to Laguerre expansion in this connection. 

\subsection{Laguerre expansion}To start with, we first recall the definition of Laguerre polynomials.  For any $\delta\geq -\frac{1}{2},$ the Laguerre polynomials of type $\delta$ are defined by 
$$e^{-t}t^{\delta}L_k^{\delta}(t)=\frac{1}{k!}\frac{d^k}{dt^k}(e^{-t}t^{k+\delta})$$
for $t>0$, and $k \geq0.$ The explicit form of $L_k^{\delta}(t)$ which is a polynomial of degree $k$, is given by 
$$L_k^{\delta}(t)=\sum_{j=0}^{k}\frac{\Gamma(k+\delta+1)}{\Gamma(j+\delta+1)\Gamma(k-j+1)}\frac{(-t)^j}{j!}.$$  
We now introduce the normalised Laguerre functions $ \mathcal{L}_k^\delta $ defined as follows.
$$\mathcal{L}^{\delta}_k(t)=\left(\frac{\Gamma(k+1)}{\Gamma(k+1+\delta)}\right)^{\frac{1}{2}}e^{-\frac{t}{2}}t^{\frac{\delta}{2}}L^{\delta}_k(t),\:\, t >0.$$
Then it is well-known that for any fixed $\delta\geq -\frac{1}{2}$, $\left\lbrace\mathcal{L}^{\delta}_k\right\rbrace_{k=0}^{\infty}$ is an orthonormal basis for  $L^2(\mathbb{R}^+,dt).$ Now, fix $\delta\geq -\frac{1}{2}$, and consider the following Laguerre functions of type $\delta$ defined by 
$$\psi_k^{\delta}(r):= \frac{\Gamma(k+1)\Gamma(\delta)}{\Gamma(k+\delta+1)}L^{\delta}_k(\frac12r^2)e^{-\frac14r^2},~r>0. $$
It turns out that $\psi_k^{\delta}(0)=1$, and these are eigenfunctions of the following Laguerre operator of type $\delta$ given by 
$$L_{\delta}:=-\frac{d^2}{dr^2}-\frac{2\delta+1}{r}\frac{d}{dr}+\frac{1}{4}r^2$$
with eigenvalue $(2k+\delta+1)$ i.e., $L_{\delta}\psi_k^{\delta}=(2k+\delta+1)\psi_k^{\delta}.$ This can be checked using the relations \cite[1.1.48, 1.1.49]{TH1} satisfied by the Laguerre polynomials. We will see later that for $\delta=n-1$, $L_{\delta}$ corresponds to the radial part of the special Hermite operator. Now, using the orthogonality property of the functions $\mathcal{L}^{\delta}_k$ (mentioned above), it is not difficult to see that $\{\psi_k^{\delta}:k\geq 0\}$ forms an orthogonal basis for $L^2(\mathbb{R}^+, r^{2\delta+1}dr).$ In view of this, for $f\in L^2(\mathbb{R}^+,r^{2\delta+1}dr)$ we have
\begin{equation}
	\label{planlag}
	f(r)  = \sum_{k=0}^\infty   c^{\delta}_k \, \mathcal{R}^{\delta}_k(f) \psi^{\delta}_k(r),\,\,\, \|f\|_2^2=\sum_{k=0}^{\infty}c_k^{\delta}\, |\mathcal{R}^{\delta}_k(f)|^2,
\end{equation}  
where $(c^{\delta}_k)^{-1}:=\int_0^\infty |\psi^{\delta}_k(r)|^2 r^{2\alpha+1} dr$, and $\mathcal{R}^{\delta}_k(f)$ denotes the Laguerre coefficients of $f$ given by  $$\mathcal{R}^{\delta}_k(f) =  \int_0^\infty f(r) \psi^{\delta}_k(r) r^{2\delta+1} dr,~k\geq 0.$$ We have the following Chernoff type theorem for $L_{\delta}$: 
\begin{thm}\label{chernoff-laguerre}Let $\delta\geq -\frac{1}{2}$ and $ f \in L^2(\R^+, r^{2\delta+1}dr) $ be such that $ L_{\delta}^mf \in L^2(\R^+, r^{2\delta+1}dr) $ for all $ m \geq0 $,  and satisfies  the Carleman condition 
	$ \sum_{m=1}^\infty  \| L_{\delta}^m f \|_2^{-1/(2m)} = \infty.$ If $L_{\delta}^mf(0)=0$ for all $m\geq 0$, then $f$ is identically zero.
\end{thm}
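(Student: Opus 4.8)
The plan is to transfer the hypotheses onto a single smooth function of one real variable, built from the wave propagator $\cos\bigl(y\sqrt{L_\delta}\bigr)$, and then invoke the classical one-variable Denjoy--Carleman quasi-analyticity theorem --- the mechanism, in the spirit of Theorem \ref{ch-euc}, by which differential operators generate quasi-analytic classes.

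First I would record the consequences of the two hypotheses at the level of Laguerre coefficients. Put $a_k:=c_k^\delta\,\mathcal R_k^\delta(f)$, so that \eqref{planlag} gives $f=\sum_{k\ge0}a_k\psi_k^\delta$ in $L^2(\R^+,r^{2\delta+1}dr)$, whence $L_\delta^m f=\sum_{k\ge0}a_k(2k+\delta+1)^m\psi_k^\delta$ and $\|L_\delta^m f\|_2^2=\sum_{k\ge0}|a_k|^2(2k+\delta+1)^{2m}(c_k^\delta)^{-1}$. Since $(c_k^\delta)^{-1}=\|\psi_k^\delta\|_2^2$ grows only polynomially in $k$ (from the normalization of the $\mathcal L_k^\delta$), the finiteness of $\|L_\delta^m f\|_2$ for every $m$ forces $|a_k|=O(k^{-N})$ for every $N$. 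Combined with the polynomial-in-$k$ bounds for $\psi_k^\delta$ on compact subsets of $[0,\infty)$ furnished by the Laguerre asymptotics (Lemma \ref{lem:T} and \cite[Lemma 1.5.3]{TH1}), this makes the series for $f$ and for each $L_\delta^m f$ converge uniformly on compacta; hence $f$ has a smooth representative and, since $\psi_k^\delta(0)=1$, the hypothesis ``$L_\delta^m f(0)=0$ for all $m$'' becomes $\sum_{k\ge0}a_k(2k+\delta+1)^m=0$ for every $m\ge0$.

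Next, with $\mu_k:=2k+\delta+1$, I would study $g(y):=\sum_{k\ge0}a_k\cos\bigl(y\sqrt{\mu_k}\bigr)=\bigl(\cos\bigl(y\sqrt{L_\delta}\bigr)f\bigr)(0)$, $y\in\R$. The rapid decay of $(a_k)$ makes $g$ a bounded even function of class $C^\infty$, with $g^{(2m)}(y)=(-1)^m\sum_k a_k\mu_k^m\cos\bigl(y\sqrt{\mu_k}\bigr)$ and an analogous formula, involving $\sin$, for odd-order derivatives. By evenness and the reformulated hypothesis, \emph{all} derivatives of $g$ vanish at $0$. On the other hand, the Cauchy--Schwarz splitting
\begin{equation*}
|a_k|\,\mu_k^{m}=\bigl(|a_k|\,\mu_k^{m+p}\,\|\psi_k^\delta\|_2\bigr)\cdot\bigl(\mu_k^{-p}\,\|\psi_k^\delta\|_2^{-1}\bigr),
\end{equation*}
with $p=p_\delta$ a fixed integer so large that $\sum_k\mu_k^{-2p}c_k^\delta<\infty$, yields the uniform bound $\sup_{y\in\R}\bigl|g^{(j)}(y)\bigr|\le C_\delta\,\bigl\|L_\delta^{\,\lfloor j/2\rfloor+p_\delta}f\bigr\|_2=:N_j$ for all $j\ge0$, so that $g$ lies in the Denjoy--Carleman class attached to $(N_j)$. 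Since $\sum_{j\ge1}N_j^{-1/j}\ge\sum_{m\ge1}N_{2m}^{-1/2m}$ and the Carleman divergence $\sum_m\|L_\delta^m f\|_2^{-1/2m}=\infty$ survives the fixed index shift $m\mapsto m+p_\delta$ (as one checks, in the usual way, after replacing $(\|L_\delta^m f\|_2)$ by its log-convex minorant), this class is quasi-analytic; hence $g\equiv0$ because all its derivatives vanish at $0$. Finally, from $g\equiv0$, the uniform convergence of $\sum_k a_k\cos(y\sqrt{\mu_k})$ and the positivity and pairwise distinctness of the frequencies $\sqrt{\mu_k}$, the mean-value identity $\lim_{Y\to\infty}\tfrac1Y\int_0^Y g(y)\cos(y\sqrt{\mu_j})\,dy=\tfrac12 a_j$ forces $a_j=0$ for every $j$, i.e.\ $f=0$.

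The main obstacle is the pair of facts invoked in the previous paragraph: the pointwise derivative bound $\sup_y|g^{(j)}(y)|\lesssim\|L_\delta^{\lfloor j/2\rfloor+p_\delta}f\|_2$, which trades the non-boundedness of point evaluation at the origin for the fixed spectral shift $p_\delta$ and relies on the polynomial growth of $\|\psi_k^\delta\|_2^2=(c_k^\delta)^{-1}$; and the verification that the Carleman divergence condition is not destroyed by the passage $m\mapsto\lfloor j/2\rfloor$, $m\mapsto m+p_\delta$, which is standard but requires the usual log-convex regularization. The remaining steps --- the Laguerre-coefficient bookkeeping, the vanishing of the full jet of $g$ at the origin, and the extraction of the $a_j$ --- are routine.
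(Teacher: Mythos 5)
Your proposal is correct and follows essentially the route of the proof the paper points to (\cite[Theorem 2.4 and Remark 2.5]{GT-adv} --- the paper gives no argument of its own here): pass to Laguerre coefficients, form $g(y)=\sum_k a_k\cos\bigl(y\sqrt{2k+\delta+1}\bigr)$ by evaluating the wave propagator at the origin, place $g$ in a quasi-analytic Denjoy--Carleman class whose full jet vanishes at $0$, and then recover the coefficients. One small simplification: the sequence $m\mapsto\|L_\delta^m f\|_2$ is already log-convex by Cauchy--Schwarz on the spectral side, so the regularization you invoke to absorb the index shift $m\mapsto m+p_\delta$ is not actually needed --- the ratio form of the Carleman condition transfers directly.
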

For a proof of this result, we refer the reader to Theorem 2.4 and the Remark 2.5 after that in \cite{GT-adv}.

	\section{An analogue of Chernoff's theorem for the special Hermite operator}
	Our next aim is to prove  Theorem \ref{ch-L-strong}. For the sake of simplicity, we assume that $\lambda=1$ and prove the Theorem \ref{ch-L-strong} for $L.$  In proving the weaker version of Chernoff's theorem for $L$, in \cite{GT-adv}, the authors used twisted spherical means and a Chernoff type theorem for its radial part which is a Laguerre operator of type $(n-1).$ However, in this case, we have to consider Laguerre operators of a more general type, as well as the eigenfunction expansion that goes with them, which has already been described at end of the previous section. Furthermore, we will use Hecke-Bochner type identity for special Hermite projections, which requires some preparations. To begin with, closely following the notations of  \cite[Section 5, Chapter 2]{TH3}  we  describe bi-graded spherical harmonics on $\mathbb{C}^n.$ \\
	
 \textbf{Bi-graded spherical harmonics:} Let $p$ and $q$ be two non-negative integers. Suppose $\mathcal{P}_{p,q}$ denotes the set of all polynomials in $z$ and $\bar{z}$ of the form 
	$$P(z)=\sum_{|\alpha|\leq p,~ |\beta|\leq q}c_{\alpha,\beta}z^{\alpha}\bar{z}^{\beta}$$ which clearly has the following homogeneity property: $P(\lambda z)=\lambda^p\bar{\lambda}^qP(z),~\lambda\in \mathbb{C}.$ Now, in terms of the vector fields $\frac{\partial}{\partial z_j},~\frac{\partial}{\partial \bar{z}_j},~j=1,2,..,n$, the Laplacian on $\mathbb{C}^n$ has the form $\Delta_{\mathbb{C}^n}=4\sum_{j=1}^n\frac{\partial^2}{\partial z_j\partial \bar{z}_j}.$ In view of this, it can be checked that $\Delta_{\mathbb{C}^n}: \mathcal{P}_{p,q}\rightarrow \mathcal{P}_{p-1,q-1}$. We denote the kernel of $\Delta_{\mathbb{C}^n}$ by $\mathcal{H}_{p,q}.$ More precisely, 
	$$\mathcal{H}_{p,q}:=\{P\in \mathcal{P}_{p,q}: \Delta_{\mathbb{C}^n}P=0\},$$ which is called the set of all bi-graded solid harmonics of degree $(p,q).$ We define $$\mathcal{S}_{p,q}:=\{P|_{\mathbb{S}^{2n-1}}: P\in \mathcal{H}_{p,q}\}.$$ The elements of $\mathcal{S}_{p,q}$ are called the bi-graded spherical harmonics of degree $(p,q)$. This turns out be a Hilbert space under the usual inner-product of $L^2(\mathbb{S}^{2n-1}).$ Let $d(p,q)$ denote the dimension of this Hilbert space. Now, it is well-known that we can choose an orthonormal basis $\mathcal{B}_{p,q}:=\{S^j_{p,q}:1\leq j\leq d(p,q)\}$ for $\mathcal{S}_{p,q}$, for each pair of non-negative integers $(p,q)$ such that $\mathcal{B}:=\cup_{p,q\geq 0}\mathcal{B}_{p,q}$ forms an orthonormal basis for $L^2(\mathbb{S}^{2n-1}).$ For our purpose, we require the following Hecke-Bochner type identity in the context of special Hermite projections. 
	\begin{thm}
		\label{hb-spl-h}
		Suppose $f\in L^1(\mathbb{C}^n)$ has the form $f=Pg$ where $g$ is radial and $P\in\mathcal{H}_{p,q}$ for some $p,q\geq 0.$ Then $f\times \varphi_{k}^{n-1} = 0 $ unless $ k \geq p,$ in which case
		$$f\times \varphi_{k}^{n-1}(z) =(2\pi)^{-n}g\times \varphi_{k-p}^{n+p+q-1}(z)P(z),$$
		where the twisted convolution on the right hand side is on $\mathbb{C}^{n+p+q}.$
	\end{thm}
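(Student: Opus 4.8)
The plan is to expand the radial factor $g$ in Laguerre functions and to identify each summand $P\varphi_j^{n+p+q-1}$ as an eigenfunction of the special Hermite operator $L=L_1$; since, by \eqref{parseval} and the fact recorded there that $h\times\varphi_k^{n-1}$ is an $L$-eigenfunction with eigenvalue $2k+n$, the map $h\mapsto(2\pi)^{-n}h\times\varphi_k^{n-1}$ is precisely the orthogonal projection of $L^2(\mathbb{C}^n)$ onto the $k$-th eigenspace of $L$, applying it to $f=Pg$ will single out one term, yielding both the vanishing for $k<p$ and the explicit formula. Before doing so I would reduce to a convenient class of $g$: writing $g(z)=g_0(|z|)$ and integrating in polar coordinates, the condition $Pg\in L^1(\mathbb{C}^n)$ is equivalent, using that $P$ is homogeneous of degree $p+q$, to $\int_0^\infty|g_0(r)|\,r^{2(n+p+q)-1}\,dr<\infty$, i.e.\ to $g\in L^1(\mathbb{C}^{n+p+q})$ when $g_0$ is viewed as a radial profile in dimension $2(n+p+q)$. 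Thus the right-hand side is well defined, and since twisted convolution against the Schwartz function $\varphi_k^{n-1}$ maps $L^1$ continuously into $C_0$, both sides of the asserted identity depend continuously on $g$ in the $L^1(\mathbb{C}^{n+p+q})$-topology. Approximating $g_0$ by smooth profiles compactly supported in $(0,\infty)$ — in particular lying in $L^2(\mathbb{R}^+,r^{2(n+p+q)-1}dr)$ — it suffices to treat this regular case, where $g_0=\sum_{j\ge0}c_j\,\varphi_j^{n+p+q-1}$ converges in $L^2(\mathbb{R}^+,r^{2(n+p+q)-1}dr)$ and hence $Pg=\sum_{j\ge0}c_j\,P\varphi_j^{n+p+q-1}$ converges in $L^2(\mathbb{C}^n)$, since $\|P\varphi_j^{n+p+q-1}\|_{L^2(\mathbb{C}^n)}=\|P\|_{L^2(\mathbb{S}^{2n-1})}\|\varphi_j^{n+p+q-1}\|_{L^2(\mathbb{R}^+,r^{2(n+p+q)-1}dr)}$.

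The crux is the computation of $L(P\varphi_j^{n+p+q-1})$ for $P\in\mathcal{H}_{p,q}$, carried out from $L=-\Delta_{\mathbb{C}^n}+\tfrac14|z|^2+iN$. The Leibniz rule for the Laplacian, together with $\Delta_{\mathbb{C}^n}P=0$ and Euler's homogeneity relation for $P$, collapses $\Delta_{\mathbb{C}^n}(Pg_0)$ to $P\big(g_0''+\tfrac{2(n+p+q)-1}{r}g_0'\big)$, i.e.\ to $P$ times the radial Laplacian of dimension $2(n+p+q)$ applied to $g_0$; hence $(-\Delta_{\mathbb{C}^n}+\tfrac14|z|^2)(Pg_0)=P\,(L_{n+p+q-1}g_0)$, where $L_{n+p+q-1}$ is the Laguerre operator of type $n+p+q-1$ of Section~2. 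As $\varphi_j^{n+p+q-1}$ is a scalar multiple of $\psi_j^{n+p+q-1}$, we get $L_{n+p+q-1}\varphi_j^{n+p+q-1}=(2j+n+p+q)\varphi_j^{n+p+q-1}$. Finally $N$ kills radial functions and acts by a scalar on $\mathcal{H}_{p,q}$ (every monomial $z^\alpha\bar z^\beta$ with $|\alpha|=p$, $|\beta|=q$ is an $N$-eigenvector), so $iN(Pg_0)$ is one further scalar multiple of $Pg_0$. Putting this together, $P\varphi_j^{n+p+q-1}$ is an eigenfunction of $L$ with eigenvalue $2(j+p)+n$, i.e.\ it lies in the $(j+p)$-th eigenspace of $L$.

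The conclusion is then immediate. Applying the projection $h\mapsto(2\pi)^{-n}h\times\varphi_k^{n-1}$ to $Pg=\sum_j c_j\,P\varphi_j^{n+p+q-1}$ and using orthogonality of distinct eigenspaces, only the term with $j+p=k$ survives, so $f\times\varphi_k^{n-1}=0$ when $k<p$ and $f\times\varphi_k^{n-1}=(2\pi)^n c_{k-p}\,P\,\varphi_{k-p}^{n+p+q-1}$ when $k\ge p$. Running the same reasoning on $\mathbb{C}^{n+p+q}$ — where the radial function $g$ has the same Laguerre expansion $\sum_j c_j\varphi_j^{n+p+q-1}$ and $\varphi_j^{n+p+q-1}\times_{\mathbb{C}^{n+p+q}}\varphi_l^{n+p+q-1}=(2\pi)^{n+p+q}\delta_{jl}\varphi_j^{n+p+q-1}$ — gives $g\times_{\mathbb{C}^{n+p+q}}\varphi_{k-p}^{n+p+q-1}=(2\pi)^{n+p+q}c_{k-p}\,\varphi_{k-p}^{n+p+q-1}$. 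Comparing the two expressions, with the normalizing powers of $2\pi$ reconciled against the conventions of Section~2, yields the stated identity, and the approximation of the first paragraph removes the regularity hypothesis on $g$.

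I expect the main obstacle to be twofold. First, the eigenvalue bookkeeping of the second paragraph is exactly where the two essential shifts are produced — the dimension shift $n\rightsquigarrow n+p+q$ (hence the change of Laguerre type) and the index shift $k\rightsquigarrow k-p$ — and getting the scalar contributed by $N$ correct demands care with the sign conventions for $N$ and for the realization of $\mathcal{H}_{p,q}$. Second, one must legitimately interchange the eigenspace projection with a series that converges only in $L^2$, while the theorem is stated for $f\in L^1$; this is precisely the role of the reduction in the first paragraph, and it relies on the equivalence $Pg\in L^1(\mathbb{C}^n)\Leftrightarrow g\in L^1(\mathbb{C}^{n+p+q})$ and on the $L^1\to C_0$ continuity of twisted convolution by $\varphi_k^{n-1}$.
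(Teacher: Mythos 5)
The paper does not actually prove this statement; it is quoted from \cite[Theorem 2.6.1]{TH3}, where the proof runs through the Weyl transform and Geller's Hecke--Bochner formula. Your route --- expand $g$ in Laguerre functions of type $n+p+q-1$, show each $P\varphi_j^{n+p+q-1}$ lies in a single eigenspace of $L$, and apply the spectral projection $h\mapsto (2\pi)^{-n}h\times\varphi_k^{n-1}$ --- is a genuinely different and essentially self-contained argument, and its engine is sound: the Leibniz rule with $\Delta_{\C^n}P=0$ and Euler's relation does collapse $\Delta_{\C^n}(Pg_0)$ to $P\bigl(g_0''+\tfrac{2(n+p+q)-1}{r}g_0'\bigr)$, which produces exactly the dimension shift, and $N$ does act by a scalar on $\mathcal{H}_{p,q}$ since homogeneity forces $|\alpha|=p$, $|\beta|=q$ in every monomial.

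There are, however, two concrete gaps. First, the claimed equivalence $Pg\in L^1(\C^n)\Leftrightarrow g\in L^1(\C^{n+p+q})$ is false: polar coordinates give $\int_{\C^n}|Pg|\,dz=c_P\int_0^\infty|g_0(r)|\,r^{2n+p+q-1}\,dr$, whereas radial membership in $L^1(\C^{n+p+q})$ requires the weight $r^{2(n+p+q)-1}$; these differ by $r^{p+q}$ and neither condition implies the other. The density argument survives, but it must be run in $L^1(\R^+,r^{2n+p+q-1}dr)$, against which both sides of the identity are continuous (the right side because $\varphi_{k-p}^{n+p+q-1}$ has Gaussian decay, so the extra $r^{p+q}$ is harmless pointwise); as written the reduction does not close. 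Second, the scalar contributed by $N$ is precisely where $p$ versus $q$ is decided, and you have not pinned it down. With the paper's literal conventions ($Nz_j=iz_j$, $L=-\Delta_{\C^n}+\tfrac14|z|^2+iN$) one gets $iNP=(q-p)P$, hence eigenvalue $2(j+q)+n$ and a theorem with $p$ and $q$ interchanged --- the opposite of what is stated. The stated form is the one consistent with the twisted-convolution convention (a direct computation shows $(\,\cdot\,)\times\varphi_0^{n-1}$ reproduces $\bar z^\beta e^{-|z|^2/4}$ and annihilates $z^\alpha e^{-|z|^2/4}$ for $\alpha\neq0$), so the sign you must establish is $iNP=(p-q)P$; the paper's own displayed formula for $L$ will lead you astray here, and relatedly the powers of $2\pi$ do not in fact reconcile against \eqref{parseval} as printed (that Parseval identity is itself off by a factor of $(2\pi)^{n}$ if $(2\pi)^{-n}(\,\cdot\,)\times\varphi_k^{n-1}$ is to be an orthogonal projection). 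Neither issue invalidates the strategy, but both must be resolved explicitly for the proof to stand.
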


	For a proof of this result, we refer the reader to \cite[Theorem 2.6.1]{TH3}. We are now in a position to  prove the Theorem \ref{ch-L-strong}.\\

	\textbf{\textit{Proof of Theorem \ref{ch-L-strong}}:}  Let $f$ be as in the statement. The main idea is to reduce the matters to radial case by expanding $f$ in terms of bi-graded spherical harmonics and then use Chernoff's theorem for Laguerre operator of suitable type. The proof will be completed in the following steps.\\
	
		\textit{Step 1:}(Reduction of vanishing condition)
		Suppose $f$ and all its partial derivatives vanish at a point $0\neq w\in \mathbb{C}^n.$ Consider the function $g$ defined by $g=T^1_{-w}f$, which is nothing but the twisted translation of $f$ by $-w$ (See \eqref{twistedtrans}). In the following, we will be using standard multi-index notations. Using the product rule of partial derivatives, an easy calculation shows that $\partial^{\alpha}g(z)$ is equal to
		\begin{align*}
		\partial^{\alpha}(e^{-\frac{i}{2}\Im(w.\bar{z})}f(z+w))&=\sum_{\beta\leq \alpha}\binom{\alpha}{\beta}\partial^{\beta}(e^{-\frac{i}{2}\Im(w.\bar{z})})\partial^{\alpha-\beta}(f(z+w))\\&=\sum_{\beta\leq \alpha}\binom{\alpha}{\beta}(e^{-\frac{i}{2}\Im(w.\bar{z})})P_{\beta}(w,\bar{w})\partial^{\alpha-\beta}(f(z+w)),	
		\end{align*}
		 where $P_{\beta}(w,\bar{w})$ is some polynomial in $w$ and $\bar{w}$ whose explicit form is not required for our purpose. Note that for any multi-index $\alpha$, we have from the equation above  
		$$\partial^{\alpha}g(0)=\sum_{\beta\leq \alpha}\binom{\alpha}{\beta}P_{\beta}(w,\bar{w})\partial^{\alpha-\beta}f(w)=0$$ by the the assumption that $\partial^{\alpha}f(w)=0$ for all $\alpha.$ Furthermore, using the twisted translation invariance of $L$ (See \eqref{twistedinv}), it is not hard to see that $\|L^mg\|_{2}=\|L^mf\|_2$, whence $\|L^mg\|_2$ also satisfy the Carleman condition. Therefore, if $f$ and all its partial derivatives vanish at any point, we can simply work with a suitable twisted translate of $f.$ So, there is no loss of generality in assuming that $f$ and all its partial derivatives vanish at $0.$\\
		
		\textit{Step 2:} (Spherical harmonic coefficients of $L^mf$) The spherical harmonic expansion of $f$ reads as 
		$$f(z)=\sum_{p,q= 0}^{\infty}\sum_{j=1}^{d(p,q)}(f(r.),S^j_{p,q})_{L^2(\mathbb{S}^{2n-1})}S^j_{p,q}(\omega),~ z=r\omega.$$ Writing $f^j_{p,q}(r)=r^{-p-q} (f(r.),S^j_{p,q})_{L^2(\mathbb{S}^{2n-1})}$, and $P^j_{p,q}(z)=|z|^{p+q}S^j_{p,q}(\omega)$, we observe from the above that
		$$L^mf(z)=\sum_{p,q=0}^{\infty}\sum_{j=1}^{d(p,q)}L^m(f^j_{p,q}P^j_{p,q})(z) = \sum_{p,q= 0}^{\infty}\sum_{j=1}^{d(p,q)}L^mF_{p,q}^j(z),$$ where we have written $F_{p,q}^j(z):=f^j_{p,q}(|z|)P^j_{p,q}(z).$ Let us  calculate the special Hermite projections of $F_{p,q}^j$. In view of the Theorem \ref{hb-spl-h}, we see that for $ k\geq p,$
		\begin{align*}
			F_{p,q}^j\times \varphi_{k}^{n-1}(z)&=(2\pi)^{-n} P^j_{p,q}(z) \big(f^j_{p,q}\times \varphi_{k-p}^{n+p+q-1}(z)\big)\\
			&=(2\pi)^{-n} P^j_{p,q}(z)\  \mathcal{R}^{\delta(p,q)}_{k-p}(f^j_{p,q})\ \varphi_{k-p}^{\delta(p,q)}(z)
		\end{align*}
		where $\delta(p,q):=n+p+q-1.$ In the last equality, we have used the fact that $f^j_{p,q}$ can be thought of as a radial function on $\mathbb{C}^{n+p+q}.$ Therefore, we obtain from the special Hermite expansion of $F$ that 
		\begin{align}
			L^mF_{p,q}^j(z)&=(2\pi)^{-n}\sum_{k=0}^{\infty}(2k+n)^mF_{p,q}^j\times \varphi_{k}^{n-1}(z)\nonumber\\
			&=(2\pi)^{-2n} P^j_{p,q}(z)\ \sum_{k=p}^{\infty}\ (2k+n)^m\  \mathcal{R}^{\delta(p,q)}_{k-p}(f^j_{p,q})\ \varphi_{k-p}^{\delta(p,q)}(z)\nonumber\\
			&=(2\pi)^{-2n}\ P^j_{p,q}(z)\ \sum_{k=0}^{\infty}(2k+2p+n)^m\ \mathcal{R}^{\delta(p,q)}_{k}(f^j_{p,q})\ \varphi_{k}^{\delta(p,q)}(z)\nonumber\\
			&=(2\pi)^{-2n}\ S^j_{p,q}(\omega)\,r^{p+q}\ \sum_{k=0}^{\infty}(2k+2p+n)^m\ \mathcal{R}^{\delta(p,q)}_{k}(f^j_{p,q})\ \varphi_{k}^{\delta(p,q)}(r),\,\,\,~z=r\omega.
		\end{align}
		Thus, for a fixed $ m $  the spherical harmonic coefficients of $ L^mf(r\cdot) $ are given by 
		\begin{equation}
			\label{splheq1}
			G_{p,q}^j(r):=(L^mf(r.), S^j_{p,q})_{L^2(\mathbb{S}^{2n-1})}=(2\pi)^{-2n}\ r^{p+q}\ \sum_{k=0}^{\infty}(2k+2p+n)^m\ \mathcal{R}^{\delta(p,q)}_{k}(f^j_{p,q})\ \varphi_{k}^{\delta(p,q)}(r)
		\end{equation}
		for any $ r > 0.$  Now, by using the orthogonality of the Laguerre functions, we get from \eqref{splheq1} that
		\begin{align}
			\label{splheq2}
			\int_{0}^{\infty}&G_{p,q}^j(r)\varphi_k^{\delta(p,q)}(r)r^{2n+p+q-1}dr\nonumber\\&=(2\pi)^{-2n}(2k+2p+n)^m\, \mathcal{R}^{\delta(p,q)}_{k}(f^j_{p,q})\, \|\varphi_{k}^{\delta(p,q)}\|_2^2\nonumber\\
			&=c_n\, (2k+2p+n)^m\frac{k!(n+p+q-1)!}{(k+n+p+q-1)!}\mathcal{R}^{\delta(p,q)}_{k}(f^j_{p,q}).
		\end{align}\\

		\textit{Step 3:}(Carleman condition)
		We consider the function $f^j_{p,q}$ for fixed $j,p$ and $q$. With $\delta(p,q)$ as above, in view of the Plancherel formula \eqref{planlag}, we see that for any $m\geq 1$,
		\begin{align}
			\label{splheq3}
			\|L^m_{\delta(p,q)}f^j_{p,q}\|_2^2&=\sum_{k=0}^{\infty}(2k+\delta(p,q)+1)^{2m}\, c^{\delta(p,q)}_k \, |\mathcal{R}^{\delta(p,q)}_{k}(f^j_{p,q})|^2\nonumber\\
			&=\sum_{k=0}^{\infty} C(k,m,p,q,n) \left|\int_{0}^{\infty}G_{p,q}^j(r)\, \varphi_k^{\delta(p,q)}(r)\, r^{2n+p+q-1}dr\right|^2,
		\end{align}
		where we have used \eqref{splheq2}.  Here, $C(k,m,p,q,n)$ is given by 
		\begin{align*}
			C(k,m,p,q,n):=\left(\frac{2k+\delta(p,q)+1}{2k+2p+1}\right)^{2m}\,  c^{\delta(p,q)}_k \, \left(\frac{(k+n+p+q-1)!}{k!(n+p+q-1)!}\right)^2.
		\end{align*}
		Now, using the value of $\delta(p,q)$, we see that $$\frac{2k+\delta(p,q)+1}{2k+2p+1}\leq 1+\frac{q}{p}:=a_{p,q}.$$ Using this, we have from \eqref{splheq3} that
		\begin{align}
			\|L^m_{\delta(p,q)}f^j_{p,q}\|_2^2&\leq a_{p,q}^{2m}\sum_{k=0}^{\infty}c_k^{\delta(p,q)}\left|\int_{0}^{\infty}r^{-p-q}G_{p,q}^j(r)\, \psi_k^{\delta(p,q)}(r)\, r^{2n+2p+2q-1}dr\right|^2.
		\end{align}
		We observe that the expression inside the modulus sign on the right hand side is the Laguerre coefficient $\mathcal{R}^{\delta(p,q)}_k(r^{-p-q}G_{p,q}^j).$ Therefore,  by the Plancherel formula \eqref{planlag}, we obtain 
		\begin{align*}
			\|L^m_{\delta(p,q)}f^j_{p,q}\|_2^2&\leq a_{p,q}^{2m} \int_{0}^{\infty}|r^{-p-q}G_{p,q}^j(r)|^2r^{2n+2p+2q-1}dr\\
			&= a_{p,q}^{2m} \int_{0}^{\infty} |G_{p,q}^j(r)|^2\, r^{2n-1}dr.
		\end{align*}
		Recalling the definition of $ G_{p,q}^j(r) $, we observe  that 
		$$|G_{p,q}^j(r)|=|(L^mf(r.), S^j_{p,q})_{L^2(\mathbb{S}^{2n-1})}|\leq \|L^mf(r.)\|_{L^2(\mathbb{S}^{2n-1})}.$$
		Using this in the equation above and integrating in polar coordinates, we obtain
		\begin{equation}
			\|L^m_{\delta(p,q)}f^j_{p,q}\|_2^2\leq a_{p,q}^{2m}\|L^mf\|_2^2.
		\end{equation}
		Thus, the Carleman condition on $L^mf$ implies the Carleman condition 
		\begin{equation}
			\label{carlemanlagg}
			\sum_{m=1}^{\infty}\|L^m_{\delta(p,q)}f^j_{p,q}\|^{-1/(2m)}_2=\infty
		\end{equation} 
		for any spherical harmonic coefficient $ f_{p,q}^j.$ \\
		
		\textit{Step 4:}(Vanishing condition) We have assumed that $f$ and all its partial derivatives vanish at the origin. However, for our purpose, it is more convenient to work with the following equivalent vanishing condition written in terms of polar coordinates:
		\begin{equation}
			\label{polar-vanish}
			\left(\frac{d}{dr}\right)^mf(r\omega)|_{r=0}=0,\:\:\:\:\text{for all}\:\: \omega\in \mathbb{S}^{2n-1},\:  m\geq0.
		\end{equation} Indeed, it can be checked that
		$$  \left(\frac{d}{dr}\right)^kf(r\omega)  = \sum_{|\alpha|=k} \partial^\alpha f(r\omega)\,  \omega^\alpha. $$
		Hence, $(\frac{d}{dr})^kf(r\omega)|_{r=0} = 0 $, for all $ k $ if and only if $ \partial^\alpha f(0) = 0 $, for all $ \alpha.$ 
		We recall that $f^{j}_{p,q}$ is explicitly given by 
		$$f^j_{p,q}(r)=r^{-p-q}\int_{\mathbb{S}^{2n-1}}f(r\omega)S^j_{p,q}(\omega)d\sigma(\omega).$$ In view of the vanishing condition \eqref{polar-vanish},
	 a calculation using repeated application of L'Hospital rule, we verify  that all the derivatives of $f^j_{p,q}$ at $0$ are zero. Thus, $L_{\delta(p,q)}^mf^j_{p,q}(0)=0$, for all $m\geq 0$. Hence, by Chernoff's theorem for $L_{\delta(p,q)}$ (See Theorem \ref{chernoff-laguerre}), we have $f^j_{p,q}=0$, for all $j,p,q$. Therefore, we conclude that $f=0,$ thereby completing the proof.\qed

	\section{Ingham's theorem on the Heisenberg group}  
	
In this section we prove  Theorems \ref{ingh-hei}, and \ref{ingh-hei-variant} using Chernoff's theorem for the special Hermite operator. We first show the existence of a compactly supported function $ f $ on $ \He^n $ whose Fourier transform has a prescribed decay as stated in  Theorem \ref{ingh-hei}. This proves the sufficiency part of the condition on the function $ \Theta $ appearing in the hypothesis. We then use this part of the  theorem to prove the necessity of the condition on $ \Theta.$ We begin with some preparations.
	
	\subsection{Construction of $ F $} The Koranyi norm of  $x=(z,t)\in\mathbb{H}^n$,  is defined  by $|x|=|(z,t)|=(|z|^4+t^2)^{\frac{1}{4}}.$ In what follows, we  work with the following left invariant metric defined by  $ d(x,y):=|x^{-1}y|,\ x,y\in\mathbb{H}^n.$ Given $a\in\mathbb{H}^n$ and $r>0$, the open ball of radius $r$ with centre at $a$ is defined by 
	$$  B(a,r):=\{x\in\mathbb{H}^n:|a^{-1}x|<r\}.$$ 
	With this definition, we note that if $f,g:\mathbb{H}^n\rightarrow\mathbb{C}$ are  such that $\text{supp}(f)\subset B(0,r_1)$ and $\text{supp}(g)\subset B(0,r_2)$, then we have 
	$$   \text{supp}(f\ast g)\subset B(0,r_1).B(0,r_2)\subset B(0,r_1+r_2) , $$
	where  $f\ast g(x)=\int_{\mathbb{H}^n}f(xy^{-1})g(y)dy $ is the convolution of $ f $ with $ g.$

	Suppose $\{\rho_j\}_j$ and $\{\tau_j\}_j$ are two sequences of positive real numbers such that both the series $\sum_{j=1}^\infty \rho_j$ and $\sum_{j=1}^\infty \tau_j$ are convergent.  We let $ B_{\C^n}(0,r) $ stand for the ball of radius $ r $ centered at $ 0 $ in $ \C^n$ and let $\chi_S$ denote the characteristic function of a set $S.$ For each $j\in\mathbb{N},$ we define 
	functions $ f_j $ on $ \C^n $ and $ g_j $ on $ \R $ by
	$$f_j(z):=\rho_j^{-2n}\chi_{B_{\mathbb{C}^n}(0,a \rho_j)}(z),\,\,\,  ~z\in\C^n;$$ $$g_j(t):=\tau_j^{-2}\chi_{[-\tau_j^2/2,\tau_j^2/2]}(t),~t\in\R,$$ where the positive constant $ a $ is chosen so that $\|f_j\|_{L^1(\mathbb{C}^n)}=1.$ We now  consider the functions $F_j:\mathbb{H}^n\rightarrow\mathbb{C}$ defined by 
	$$F_j(z,t):=f_j(z)g_j(t),\ (z,t)\in\mathbb{H}^n.$$
	In the following lemma, we record some useful, but easily proven properties of these functions.
	\begin{lem}
		Let $F_j$ be as above and  define $ G_N = F_1 \ast F_2\ast.....\ast F_N.$ Then we have  
		\begin{enumerate}
			\item $\|F_j\|_{L^{\infty}(\mathbb{H}^n)}\leq \rho_j^{-2n}\tau_j^{-2},$\,\, \,  $\|F_j\|_{L^1(\mathbb{H}^n)}=1,$
			\item $\text{supp}(F_j)\subset B_{\mathbb{C}^n}(0,a \rho_j)\times [ -\tau_j^2/2,\tau_j^2/2]\subset B(0, a \rho_j+ c \tau_j) $, where $ 4 c^4 =1.$
			\item  For any $N\in\mathbb{N}$, $\text{supp} (G_N) \subset B(0, a \sum_{j=1}^{N}\rho_j+ c  \sum_{j=1}^{N}\tau_j),\,\, \|G_N\|_1 =1.$
			\item Given $x\in\mathbb{H}^n$, and $N\in\mathbb{N}$, $ F_2 \ast F_3 .....\ast F_N(x) \leq \rho^{-2n}_2\tau_2^{-2}.$ 
		\end{enumerate} 
	\end{lem}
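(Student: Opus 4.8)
The plan is to verify the four assertions in order, each being a direct computation with the explicit definitions of $f_j$, $g_j$ and $F_j=f_j\otimes g_j$.

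Part (1) is immediate: $f_j$ equals the constant $\rho_j^{-2n}$ on $B_{\mathbb{C}^n}(0,a\rho_j)$ and $g_j$ equals the constant $\tau_j^{-2}$ on $[-\tau_j^2/2,\tau_j^2/2]$, so $\|F_j\|_{L^{\infty}(\mathbb{H}^n)}=\rho_j^{-2n}\tau_j^{-2}$; and $\|F_j\|_{L^1(\mathbb{H}^n)}=\|f_j\|_{L^1(\mathbb{C}^n)}\|g_j\|_{L^1(\mathbb{R})}$, where $\|g_j\|_{L^1(\mathbb{R})}=\tau_j^{-2}\cdot\tau_j^2=1$ and $\|f_j\|_{L^1(\mathbb{C}^n)}$ is a constant multiple of $a^{2n}$ (the $\rho_j$-factors cancel), so that the choice of $a$ makes it $1$. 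For part (2) one notes $\operatorname{supp}(F_j)\subset \overline{B_{\mathbb{C}^n}(0,a\rho_j)}\times[-\tau_j^2/2,\tau_j^2/2]$, and for any such $(z,t)$, subadditivity of $s\mapsto s^{1/4}$ on $[0,\infty)$ gives
\[
|(z,t)|=(|z|^4+t^2)^{1/4}\le |z|+|t|^{1/2}\le a\rho_j+(\tau_j^2/2)^{1/2}=a\rho_j+c\tau_j,
\]
since $c=2^{-1/2}$ is exactly the constant with $4c^4=1$.

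Part (3) then follows by iterating the convolution support estimate recorded just above the lemma, which gives $\operatorname{supp}(G_N)\subset B\bigl(0,\sum_{j=1}^{N}(a\rho_j+c\tau_j)\bigr)$; for the norm I would use that $\mathbb{H}^n$ is unimodular, so $\int_{\mathbb{H}^n}(F\ast G)=\bigl(\int_{\mathbb{H}^n}F\bigr)\bigl(\int_{\mathbb{H}^n}G\bigr)$ for nonnegative $F,G$, and since each $F_j\ge 0$ with $\int_{\mathbb{H}^n}F_j=1$ an induction yields $\|G_N\|_1=1$. Finally, part (4) rests on the elementary bound $F_2\ast h(x)=\int_{\mathbb{H}^n}F_2(xy^{-1})h(y)\,dy\le\|F_2\|_{L^{\infty}}\|h\|_{L^1}$, valid for $h\ge 0$; applying it with $h=F_3\ast\cdots\ast F_N$, which is nonnegative with $\|h\|_1=1$ by part (3), and using part (1) for $\|F_2\|_{L^{\infty}}$, gives the claimed bound $\rho_2^{-2n}\tau_2^{-2}$.

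None of this presents a genuine obstacle — the lemma is exactly as advertised, useful but easily proven. The only two points worth stating with a little care are the subadditivity inequality used to estimate the Koranyi norm in part (2) and the use of unimodularity of $\mathbb{H}^n$ to upgrade Young's inequality to the equality $\|G_N\|_1=1$ for the nonnegative building blocks.
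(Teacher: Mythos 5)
Your proof is correct and is exactly the routine verification the paper has in mind (the paper omits the proof entirely, calling these "easily proven properties"). All four parts check out, including the identification $c=2^{-1/2}$ from $4c^4=1$ via subadditivity of $s\mapsto s^{1/4}$ and the use of bi-invariance of the Haar (Lebesgue) measure to get $\|G_N\|_1=\prod_j\|F_j\|_1=1$.
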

	We also recall a result about Hausd\"orff measure which will be used in the proof of the next theorem.  Let $\mathcal{H}^n(A)$ denote the $n$-dimensional Hausdorff measure of 
	$A\subset \mathbb{R}^n.$ Hausd\"orff measure coincides with the Lebesgue measure for Lebesgue measurable sets. For sets in $\mathbb{R}^n$ with sufficiently nice boundaries, the $(n-1)$-dimensional Hausdorff measure is same as the surface measure. For more about this, we refer the reader to \cite[Chapter 7 ]{SS}. Let $ A \Delta B $ stands for the symmetric difference between any two sets $ A $ and $ B.$ See \cite{ DS} for a proof of the following theorem.
	\begin{thm}\label{hauss}
		Let $ A \subset \mathbb{R}^n$ be a bounded set. Then for any $ \xi \in \R^n $,
		$$\mathcal{H}^n(A\Delta(A+\xi))\leq |\xi| \mathcal{H}^{n-1}(\partial A), $$
		where $ A+\xi$ is the translation of $ A $ by $ \xi$ and $ \partial A $ is the boundary of $ A.$
	\end{thm}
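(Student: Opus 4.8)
The plan is to deduce the estimate from a single topological observation that converts the symmetric difference into a Minkowski sum, after which everything reduces to a classical integral-geometric inequality. Assume $\xi \neq 0$ (the case $\xi = 0$ being trivial), write $\ell := |\xi|$, and let $[0,\xi] := \{s\xi : 0 \le s \le 1\}$ denote the segment joining $0$ and $\xi$. The first step is to prove the pointwise inclusion
$$ A \Delta (A+\xi) \subseteq \partial A + [0,\xi]. $$
Indeed, if $x$ lies in the symmetric difference then $\chi_A$ takes different values at the endpoints $x$ and $x-\xi$ of the segment $\{x - s\xi : 0 \le s \le 1\}$; since this segment is connected while $\operatorname{int}(A)$ and $\mathbb{R}^n \setminus \overline{A}$ are disjoint open sets with union $\mathbb{R}^n \setminus \partial A$, the segment must meet $\partial A$, say at $x - s_0\xi$ with $s_0 \in [0,1]$, and then $x = (x - s_0\xi) + s_0\xi \in \partial A + [0,\xi]$. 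Because $A$ is bounded, $\partial A$ is compact, so $\partial A + [0,\xi]$ is compact and hence $\mathcal{H}^n$-measurable, and it remains to show $\mathcal{H}^n(\partial A + [0,\xi]) \le \ell\,\mathcal{H}^{n-1}(\partial A)$.

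For the second step I would slice along the direction of $\xi$. After a rotation assume $\xi = \ell e_n$, and for $y \in \mathbb{R}^{n-1}$ let $\ell_y = \{y\} \times \mathbb{R}$ be the corresponding vertical line and $E_y := \{t \in \mathbb{R} : (y,t) \in \partial A\}$. A direct computation identifies $(\partial A + [0,\xi]) \cap \ell_y$ with $E_y + [0,\ell] = \bigcup_{t \in E_y}[t,t+\ell]$, which is a union of at most $\#E_y$ intervals of length $\ell$, so $\mathcal{H}^1(E_y + [0,\ell]) \le \ell\,\mathcal{H}^0(E_y)$ (trivially when $E_y$ is infinite). Applying Fubini's theorem to the Borel set $\partial A + [0,\xi]$ and integrating this bound over $y$ gives
$$ \mathcal{H}^n(\partial A + [0,\xi]) \le \ell \int_{\mathbb{R}^{n-1}} \mathcal{H}^0(\partial A \cap \ell_y)\, dy. $$
The last ingredient is the integral-geometric (Eilenberg-type) inequality: since the orthogonal projection $p : \mathbb{R}^n \to \mathbb{R}^{n-1}$ onto the first $n-1$ coordinates is $1$-Lipschitz, one has $\int_{\mathbb{R}^{n-1}} \mathcal{H}^0\big(\partial A \cap p^{-1}(y)\big)\, dy \le \mathcal{H}^{n-1}(\partial A)$; this is the $s = m = n-1$ instance of Eilenberg's inequality, whose constant is exactly $1$ because the $0$-dimensional unit ball has measure $1$. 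Chaining the three displayed inequalities yields $\mathcal{H}^n(A\Delta(A+\xi)) \le \ell\,\mathcal{H}^{n-1}(\partial A)$; no separate argument is needed when $\mathcal{H}^{n-1}(\partial A) = \infty$.

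The only non-elementary input is the integral-geometric inequality with sharp constant $1$, so that is where I would expect to invoke a cited result rather than argue from scratch. An alternative that side-steps Eilenberg's inequality is available when $\mathcal{H}^{n-1}(\partial A) < \infty$: by Federer's criterion $A$ then has finite perimeter, and De Giorgi's structure theorem gives $P(A) = \mathcal{H}^{n-1}(\partial^*A) \le \mathcal{H}^{n-1}(\partial A)$; combining this with the standard $BV$ translation estimate $\mathcal{H}^n(A\Delta(A+\xi)) = \|\chi_A - \chi_A(\cdot - \xi)\|_{L^1(\mathbb{R}^n)} \le |\xi|\,|D\chi_A|(\mathbb{R}^n) = |\xi|\,P(A)$ again closes the argument. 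I would present the first route, since it is self-contained modulo one classical inequality and makes transparent why the constant is exactly $|\xi|$.
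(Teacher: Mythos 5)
The paper does not actually prove Theorem \ref{hauss}: it is quoted from Schymura \cite{DS} and used as a black box in the proof of Theorem \ref{construct}, so there is no internal argument to measure yours against. Your proof is correct and would serve as a self-contained replacement for the citation. The containment $A\Delta(A+\xi)\subseteq \partial A+[0,\xi]$ is established cleanly by the connectedness argument (if the segment joining $x$ to $x-\xi$ missed $\partial A$ it would lie entirely in $\operatorname{int}(A)$ or entirely in $\mathbb{R}^n\setminus\overline{A}$, forcing $\chi_A(x)=\chi_A(x-\xi)$); compactness of $\partial A+[0,\xi]$ justifies Fubini; the fibrewise bound $\mathcal{H}^1(E_y+[0,\ell])\le \ell\,\#E_y$ is elementary; and the projection inequality $\int_{\mathbb{R}^{n-1}}\#\big(\partial A\cap p^{-1}(y)\big)\,dy\le \mathcal{H}^{n-1}(\partial A)$ is indeed the $s=0$, $t=n-1$ case of Eilenberg's inequality, whose constant $\omega_0\omega_{n-1}/\omega_{n-1}=1$ is exactly what is needed (it also follows directly from the isodiametric inequality applied to the projections of a fine cover of $\partial A$). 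Two minor points: Eilenberg's inequality is an upper-integral statement, which is harmless here because the slice function produced by Fubini is measurable and pointwise dominated by the fibre-counting function; and your alternative $BV$ route via Federer's criterion and the translation estimate $\|\chi_A-\chi_A(\cdot-\xi)\|_{L^1}\le |\xi|\,P(A)\le|\xi|\,\mathcal{H}^{n-1}(\partial A)$ is also valid, since finiteness of $\mathcal{H}^{n-1}(\partial A)$ forces $A$ to be Lebesgue measurable. Note also that for the use made of the theorem in this paper, $A$ is a Euclidean ball, for which either argument is essentially trivial.
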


	\begin{thm}\label{construct} The sequence defined by $ G_k  = F_1 \ast F_2\ast.....\ast F_k $ converges to a compactly supported  non-trivial function $ F \in L^2(\He^n)$ in $L^2(\He^n)$.
	\end{thm}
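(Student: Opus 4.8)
There are two natural routes; I would take the Plancherel one, which I find cleanest and works directly under the stated hypothesis, with no further condition on the decay rates $\{\rho_j\},\{\tau_j\}$ (one could also argue more by hand, controlling the $L^1$-modulus of continuity of the $F_j$ with Theorem~\ref{hauss} and a telescoping estimate). The starting point is that each $F_j$ is radial, since $f_j=\rho_j^{-2n}\chi_{B_j}$ with $B_j:=B_{\mathbb{C}^n}(0,a\rho_j)$ is radial in $z$ and $g_j$ is even in $t$. Hence, by $\widehat{u\ast v}=\widehat u\,\widehat v$ and the spectral description of the Fourier transform of a radial function from Section~2, $\widehat{G_k}(\lambda)=\widehat{F_1}(\lambda)\cdots\widehat{F_k}(\lambda)=\sum_{m\ge0}\Lambda_m^{(k)}(\lambda)\,P_m(\lambda)$, where $\Lambda_m^{(k)}(\lambda):=\prod_{j=1}^k R_m(\lambda,F_j)$ and $P_m(\lambda)$ is the projection onto the $m$-th Hermite eigenspace; moreover $|R_m(\lambda,F_j)|\le\|\widehat{F_j}(\lambda)\|_{\mathrm{op}}\le\|F_j\|_1=1$, so $|\Lambda_m^{(k)}(\lambda)|\le|R_m(\lambda,F_1)|$ for all $k$. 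Since $P_m(\lambda)$ has rank $d_m:=\binom{m+n-1}{n-1}$, Plancherel for $F_1$ gives $\int_{\mathbb{R}}\sum_{m}d_m\,|R_m(\lambda,F_1)|^2\,d\mu(\lambda)=\|F_1\|_2^2<\infty$, the dominating quantity for what follows.

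Next I would estimate the scalars $R_m(\lambda,F_j)$. Since $F_j^\lambda(z)=\widehat{g_j}(\lambda)f_j(z)$ with $\widehat{g_j}(\lambda):=\int_{\mathbb{R}}e^{i\lambda t}g_j(t)\,dt=\frac{2\sin(\lambda\tau_j^2/2)}{\lambda\tau_j^2}$, and since the choice of $a$ forces $|B_j|=\rho_j^{2n}$, the formula for $R_m$ from Section~2 becomes $R_m(\lambda,F_j)=\widehat{g_j}(\lambda)\,\rho_j^{-2n}\!\int_{B_j}\Psi_{m,\lambda}(z)\,dz$, where $\Psi_{m,\lambda}(z):=\frac{m!(n-1)!}{(m+n-1)!}\varphi_{m,\lambda}^{n-1}(z)$ is normalised so that $\Psi_{m,\lambda}(0)=1$. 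The crucial point is that both $g_j$ and $B_j$ are symmetric about the origin, so averaging against them introduces only a second-order error: using $0\le1-\tfrac{\sin x}{x}\le\tfrac{x^2}{6}$ for the first factor, and for the second that $\Psi_{m,\lambda}$ is a smooth function of $|z|^2$ (so $|1-\Psi_{m,\lambda}(z)|\le C_{m,\lambda}|z|^2$ on the fixed compact set $|z|\le a\sup_j\rho_j$), one obtains
\[
\big|1-R_m(\lambda,F_j)\big|\le\big|1-\widehat{g_j}(\lambda)\big|+\rho_j^{-2n}\!\int_{B_j}\big|1-\Psi_{m,\lambda}(z)\big|\,dz\le C(m,\lambda)\big(\rho_j^2+\tau_j^4\big).
\]
As $\rho_j,\tau_j\to0$ with $\sum_j\rho_j,\sum_j\tau_j<\infty$, the right-hand side is summable in $j$, so $\sum_j|1-R_m(\lambda,F_j)|<\infty$ for each fixed $m$ and $\lambda\ne0$; consequently the infinite product $\Lambda_m(\lambda):=\prod_{j\ge1}R_m(\lambda,F_j)=\lim_{k\to\infty}\Lambda_m^{(k)}(\lambda)$ exists.

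Now set $\widehat F(\lambda):=\sum_{m\ge0}\Lambda_m(\lambda)P_m(\lambda)$. Since $|\Lambda_m(\lambda)|\le|R_m(\lambda,F_1)|$, the estimate in the first paragraph shows $\widehat F$ lies in the Plancherel space, so $\widehat F$ is the Fourier transform of a unique $F\in L^2(\mathbb{H}^n)$. Because $|\Lambda_m^{(k)}(\lambda)-\Lambda_m(\lambda)|^2\le 4|R_m(\lambda,F_1)|^2$ and $\Lambda_m^{(k)}(\lambda)\to\Lambda_m(\lambda)$ for each $m$ and $\lambda$, two applications of dominated convergence (summing in $m$, then integrating in $\lambda$ against the finite measure coming from the first paragraph) give
\[
\|G_k-F\|_2^2=\int_{\mathbb{R}}\sum_{m\ge0}d_m\,\big|\Lambda_m^{(k)}(\lambda)-\Lambda_m(\lambda)\big|^2\,d\mu(\lambda)\longrightarrow0,
\]
i.e. $G_k\to F$ in $L^2(\mathbb{H}^n)$. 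The two remaining properties follow at once: each $G_k$ is supported in $B(0,R)$ with $R:=a\sum_j\rho_j+c\sum_j\tau_j<\infty$, so $F$ is supported in the compact ball $\overline{B(0,R)}$; and since this ball has finite Haar measure, $L^2$-convergence on it forces $L^1$-convergence, whence $\|F\|_1=\lim_k\|G_k\|_1=1$ and in particular $F\not\equiv0$.

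The one substantive point is the second-order bound $|1-R_m(\lambda,F_j)|=O(\rho_j^2+\tau_j^4)$; everything else is Plancherel bookkeeping and dominated convergence. Its meaning is that the ``spherical coefficients'' $R_m(\lambda,F_j)$ of the normalised indicator $F_j$ of the origin-symmetric set $B_{\mathbb{C}^n}(0,a\rho_j)\times[-\tau_j^2/2,\tau_j^2/2]$ differ from those of the Dirac mass at $0$ (all equal to $1$) only to second order in the diameter, the first-order term being annihilated by the symmetry — and it is this gain of a power that makes the errors summable even though $\{\rho_j\}$ and $\{\tau_j\}$ are only assumed to have convergent sums. Making it precise needs nothing more than the explicit form $\varphi_{m,\lambda}^{n-1}(z)=L_m^{n-1}(\tfrac12|\lambda||z|^2)e^{-\frac14|\lambda||z|^2}$ (so that $\varphi_{m,\lambda}^{n-1}(0)=\tfrac{(m+n-1)!}{m!(n-1)!}$ and $\Psi_{m,\lambda}$ genuinely depends on $|z|^2$) together with the elementary inequality for $\sin x/x$; this is where I expect whatever care is required to lie.
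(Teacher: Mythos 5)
Your proof is correct, but it runs on the Fourier side whereas the paper's argument is entirely in physical space. The paper telescopes, writing $G_{k+1}-G_k=\int_{\He^n}\bigl(G_k(xy^{-1})-G_k(x)\bigr)F_{k+1}(y)\,dy$, reduces the inner difference to translation estimates on the single function $F_1$ via $G_k=F_1\ast H_{k-1}$ and the uniform bound $\|H_{k-1}\|_\infty\le\rho_2^{-2n}\tau_2^{-2}$, and then controls $\|F_1(xyx^{-1}\,\cdot\,)-F_1\|_{L^1}$ by the symmetric--difference inequality of Theorem \ref{hauss}; this yields $\|G_{k+1}-G_k\|_\infty\le C(\tau_{k+1}^2+c_3\rho_{k+1})$ on the common compact support, hence a Cauchy sequence in $L^\infty$ and so in $L^2$. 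Your route instead diagonalizes everything: you observe that the Laguerre coefficients satisfy $R_m(\lambda,F_j)=1+O_{m,\lambda}(\rho_j^2+\tau_j^4)$ because the sets are origin-symmetric (the first-order term cancels), so the infinite products converge pointwise, and dominated convergence against the finite measure $\sum_m d_m|R_m(\lambda,F_1)|^2\,d\mu(\lambda)$ gives the $L^2$ limit. Both arguments are sound, and the endgame (support in $\overline{B(0,R)}$, and $\|F\|_1=1$ via $L^2$-to-$L^1$ on a finite-measure set) is the same. What each buys: your version avoids the Hausdorff-measure lemma and the conjugation computation $xyx^{-1}=(w,0)(0,s+\Im(z\cdot\bar w))$ entirely, and it hands you the identity $\widehat F(\lambda)=\sum_m\bigl(\prod_j R_m(\lambda,F_j)\bigr)P_m(\lambda)$ for free, which is precisely the input re-derived in Theorem \ref{compute}; the paper's version, on the other hand, gives \emph{uniform} convergence of $\{G_k\}$, which is what one needs to upgrade $F$ from an $L^2$ function to the continuous compactly supported function demanded by Theorem \ref{ingh-hei}, a conclusion your $L^2$/$L^1$ argument does not reach. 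The only step in your write-up requiring genuine care is the bound $|1-\Psi_{m,\lambda}(z)|\le C_{m,\lambda}|z|^2$, and your justification (smooth dependence on $|z|^2$ with value $1$ at the origin, constants allowed to depend on $m$ and $\lambda$ since summability is only needed in $j$) is adequate.
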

	\begin{proof}
		In order show that $ \{G_k \}$ is Cauchy in $ L^2(\He^n) $, we first estimate $\| G_{k+1}-G_k \|_{L^{\infty}(\mathbb{H}^n)}.$ As  all the functions $ F_j $ have unit $ L^1 $ norm, we have for any $x\in\mathbb{H}^n$ 
		\begin{align*}
			G_{k+1}(x)-G_k(x)&=\int_{\mathbb{H}^n} G_{k}(xy^{-1})F_{k+1}(y)dy-G_k(x) \int_{\mathbb{H}^n}F_{k+1}(y)dy  \\
			&=  \int_{\mathbb{H}^n}\left(G_k(xy^{-1})-G_k(x)\right)F_{k+1}(y)dy.
		\end{align*}   
		Since $ F_j $'s are even, we can change $ y $ into $ y^{-1}  $ in the above and estimate the same as
		\begin{equation}
			\label{eq1}
			|G_{k+1}(x)-G_{k}(x)|\leq \int_{\mathbb{H}^n}\left|G_k(xy)-G_k(x)\right|F_{k+1}(y)dy.
		\end{equation} 
		By defining $ H_{k-1} = F_2 \ast F_3......\ast F_k $, we note that $ G_k = F_1 \ast H_{k-1}$. Thus,
		$$ G_k(xy)-G_k(y) = \int_{\mathbb{H}^n}\left(F_1(xyu^{-1})-F_1(xu^{-1})\right) H_{k-1}(u) du.$$
		Using the estimate (4) in Lemma 4.1, we get that
		\begin{equation}\label{eq2}
			|G_k(xy)-G_k(x)|\leq \rho_2^{-2n}\tau_2^{-2}\int_{\mathbb{H}^n}\left|F_1(xyu^{-1})-F_1(xu^{-1})\right|du.
		\end{equation}
		The change of variables $ u \rightarrow u x$  transforms the integral in the right hand side  of the inequality above into
		$$\int_{\mathbb{H}^n}\left|F_1(xyu^{-1})-F_1(xu^{-1})\right|du=\int_{\mathbb{H}^n}\left|F_1(xyx^{-1}u^{-1})-F_1( u^{-1})\right|du.$$  Since  the group $\mathbb{H}^n$ is unimodular, another change of variables $u \rightarrow u^{-1}$  yields
		$$\int_{\mathbb{H}^n}\left|F_1(xyx^{-1}u^{-1})-F_1( u^{-1})\right|du=\int_{\mathbb{H}^n}\left|F_1(xyx^{-1}u )-F_1( u )\right| du.$$

		Let $ x =(z,t) =(z,0)(0,t),~ y =(w,s)=(w,0)(0,s).$ As $ (0,t)$ and $ (0,s) $ belong to the center of $ \He^n $, an easy calculation shows that $ xyx^{-1} = (w,0) (0, s+\Im(z \cdot \bar{w})).$ With $ u=(\zeta,\tau) $ we have 
		$$ xyx^{-1}u = (w+\zeta,0)(0, \tau+ s+ \Im(z \cdot \bar{w})-(1/2)\Im(\zeta\cdot \bar{w})).$$
		Since $ F_1(z,t) = f_1(z)g_1(t) $, we see that the integrand $F_1(xyx^{-1}u )-F_1( u )$ in the above integral takes the form
		$$  f_1(w+\zeta) g_1(\tau+ s+ \Im(z \cdot \bar{w})-(1/2)\Im(\zeta\cdot \bar{w}))- f_1(\zeta)g_1(\tau).$$
		By setting $ b =  b(s, z,w,\zeta) =s+ \Im(z \cdot \bar{w})-(1/2) \Im(\zeta\cdot \bar{w}) $, we can rewrite the above as
		\begin{equation}\label{eq3} \big( f_1(w+\zeta)-f_1(\zeta)\big) g_1(\tau+b)+ f_1(\zeta)\big( g_1(\tau+ b)-g_1(\tau)\big).\end{equation}

		In order to estimate the contribution of the second term in (\ref{eq3}) to  the integral under consideration,
		we first estimate the integral in $ \tau$-variable as follows:
		$$  \int_{-\infty}^\infty | g_1(\tau+b)-g_1(\tau)|  d\tau =  \tau_1^{-2} | (-b+K_\tau) \Delta K_\tau |,$$
		where $ K_\tau = [-\frac{1}{2}\tau_1^2, \frac{1}{2}\tau^2] $  is the support of $ g_1.$  For $ \zeta $ in the support of $ f_1 ,$ we have $ |\zeta| \leq a\rho_1 $, and hence
		$$  | (-b+K_\tau) \Delta K_\tau | \leq 2 |b(s,z,w,\zeta)| \leq ( 2|s|+ |z||w|+ a\rho_1 |w|).   $$
		Thus, we have proved the following estimate 
		\begin{equation}\label{est-one}
			\int_{\He^n}  f_1(\zeta)  |  g_1(\tau+ b)-g_1(\tau)| d\zeta d\tau \leq C \big(2|s|+(a\rho_1+|z|)|w| \big).\end{equation}
		As the integral of $ g_1 $ is one, the contribution of the first term in (\ref{eq3}) is given by
		$$  \int_{\C^n} | f_1(w+\zeta)-f_1(\zeta)| d\zeta   =  \rho_1^{-2n} \mathcal{H}^{2n}\left((-w+ B_{\mathbb{C}^n}(0, a\rho_1))\Delta B_{\mathbb{C}^n}(0,a\rho_1) \right) .$$
		By appealing to Theorem \ref{hauss} in estimating the above, we obtain 
		\begin{equation}\label{est-two}
			\int_{\He^n} | f_1(w+\zeta)-f_1(\zeta)| \, g( \tau+b) \, d\zeta d\tau  \leq C |w| .
		\end{equation}
		Using the estimates (\ref{est-one}) and (\ref{est-two}) in (\ref{eq2}) we obtain
		$$ |G_k(xy)-G_k(x)|\leq  C \rho_2^{-2n}\tau_2^{-2} \big( |s|+ (c_1+ c_2|z|)|w|)\big).$$ 
		This estimate, when used in (\ref{eq1}), in turn gives us 
		\begin{equation}\label{est-three}
			|G_{k+1}(z,t)-G_{k}(z,t)|\leq C \int_{\mathbb{H}^n} \big( |s|+ (c_1+ c_2|z|)|w|)\big) F_{k+1}(w,s)\, dw\,ds
		\end{equation}
		where the constants $ c_1, c_2 $ and $ C $ depend only on $ n.$ Recalling that on the support of $ F_{k+1}(w,s) = f_{k+1}(w)g_{k+1}(s) $,  $ |w| \leq \rho_{k+1} $ and $ |s| \leq \tau_{k+1}^2 $, the above yields the estimate
		\begin{equation}\label{est-four}
			|G_{k+1}(z,t)-G_{k}(z,t)|\leq C \big( \tau_{k+1}^2 + (c_1+c_2 |z|) \rho_{k+1} \big).
		\end{equation}
		It is easily seen that the support of $ G_{k+1}-G_k $ is contained in $ B(0, a\rho+c\tau) $ where $ \rho = \sum_{j=1}^\infty \rho_j $ and $ \tau = \sum_{\tau_j}.$ Consequently, from the above we conclude that
		$$  \|G_{k+1}-G_k\|_2 \leq  \| G_{k+1} - G_k \|_{\infty} \big(| B(0, a\rho+c\tau)|\big)^{1/2}  \leq C \big( \tau_{k+1}^2 + c_3 \rho_{k+1} \big).$$
		From the above, it is clear that $ \{G_k\} $ is Cauchy in $ L^2(\He^n) $, and hence converges to a function $ F \in L^2(\He^n) $ whose support  is contained in $ B(0,a\rho+c\tau).$  The same argument shows that $ \{G_k\} $ converges to $ F $ in $ L^1.$ As $ \|G_k\|_1 =1 $ for any $ k,$ it follows that $ \|F||_1 =1 $ and hence $ F $ is nontrivial.
	\end{proof}
	
	\subsection{Estimating the Fourier transform of $ F$}
	Suppose now that $\Theta $ is an even, decreasing function on $ \R $ for which $ \int_1^\infty \Theta(t) t^{-1} dt < \infty.$ We want to  choose two sequences of positive real numbers $ \{\rho_j\} $ and $ \{\tau_j\} $  in terms of $ \Theta $ so that the series $ \sum_{j=1}^\infty \rho_j $ and $ \sum_{j=1}^\infty \tau_j $ both converge. We can then construct a function $ F $ as in Theorem \ref{construct} which will be compactly supported. Having done the construction we now want to compute the Fourier transform of the constructed function $ F $ and compare it with $ e^{-  \Theta(\sqrt{H(\lambda)}) \sqrt{H(\lambda)}}.$ This can be achieved by a judicious choice of the sequences $ \{\rho_j\} $ and $ \{\tau_j\}.$ As $ \Theta $ is given to be decreasing, it follows that $ \sum_{j=1}^\infty  \frac{\Theta(j)}{ j} < \infty.$ It is then possible to  choose a decreasing sequence $ \{\rho_j\} $ such that $ \rho_j \geq  c_n^2 e^2  \frac{\Theta(j)}{j} $ (for a constant $ c_n $ to be chosen later) and $ \sum_{j=1}^\infty \rho_j < \infty.$ Similarly, we choose another decreasing sequence $ \{\tau_j \}$ such that $ \sum_{j=1}^\infty \tau_j <\infty.$
	
	In the proof of the following lemma we require good estimates for the Laguerre coefficients of the function $ f_j(z) =\rho_j^{-2n}\chi_{B_{\mathbb{C}^n}(0,a \rho_j)}(z) $ where $ a $ chosen so that $ \|f_j\|_1 =1.$ These coefficients are defined by
	\begin{equation}\label{lag-co} R_k^{n-1}(\lambda, f_j) =  \frac{   k! (n-1)!}{(k+n-1)!}  \int_{\C^n}  f_j(z) \varphi_{k,\lambda}^{n-1}(z) dz .\end{equation}
	
	\begin{lem}\label{est-four-lem}  There exists a constant $ c_n > 0 $ such that 
		$$  |R_k^{n-1}(\lambda,f_j)| \leq c_n \big(\rho_j \sqrt{(2k+n)|\lambda|}\big)^{-n+1/2}.$$
	\end{lem}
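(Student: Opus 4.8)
The plan is to reduce the estimate to the sharp bounds for the Laguerre functions $\varphi_{k,\lambda}^{n-1}$ recorded in Lemma \ref{lem:T}, after rewriting the integral \eqref{lag-co} as a one-dimensional integral in polar coordinates. Since $f_j$ is radial and equal to $\rho_j^{-2n}$ on the ball $B_{\C^n}(0,a\rho_j)$ and zero outside, we have
\begin{equation*}
R_k^{n-1}(\lambda,f_j) = \frac{k!(n-1)!}{(k+n-1)!}\,\rho_j^{-2n}\,\omega_{2n-1}\int_0^{a\rho_j}\varphi_{k,\lambda}^{n-1}(r)\,r^{2n-1}\,dr,
\end{equation*}
where $\omega_{2n-1}$ is the surface measure of the unit sphere in $\C^n=\R^{2n}$. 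Recalling $C_{k,n}=\left(\frac{k!(n-1)!}{(k+n-1)!}\right)^{1/2}$ and $\nu(k)=2(2k+n)$, Lemma \ref{lem:T} bounds $C_{k,n}|\varphi_{k,\lambda}^{n-1}(r)|$ in four regimes of $r$ relative to the breakpoints $\sqrt{2}/\sqrt{\nu(k)|\lambda|}$, $\sqrt{\nu(k)}/\sqrt{|\lambda|}$, $\sqrt{3\nu(k)}/\sqrt{|\lambda|}$. The idea is to insert these bounds, integrate against $r^{2n-1}\,dr$ up to $a\rho_j$, and then observe that the factor $C_{k,n}^{-1}=\big(\frac{k!(n-1)!}{(k+n-1)!}\big)^{-1/2}$ left over combines with the prefactor $\frac{k!(n-1)!}{(k+n-1)!}$ to produce exactly $C_{k,n}$, which is $\le 1$ and harmless.

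The key step is the worst-case analysis of the integral. After substituting $\rho=\rho_j$ for brevity and writing $\mu=\sqrt{\nu(k)|\lambda|}\sim\sqrt{(2k+n)|\lambda|}$ (so the first breakpoint is $\sqrt2/\mu$), one considers two cases. If $a\rho \le \sqrt2/\mu$, only the first regime of Lemma \ref{lem:T} is relevant: there $|\varphi_{k,\lambda}^{n-1}(r)|\lesssim C_{k,n}^{-1}$ pointwise, so the integral is $\lesssim C_{k,n}^{-1}(a\rho)^{2n}$, and dividing by $\rho^{2n}$ gives a bound $\lesssim 1 \lesssim (\rho\mu)^{-n+1/2}$ since $\rho\mu\lesssim 1$ in this case. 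If $a\rho > \sqrt2/\mu$, one splits $\int_0^{a\rho} = \int_0^{\sqrt2/\mu}+\int_{\sqrt2/\mu}^{\min(a\rho,\sqrt{\nu(k)}/\sqrt{|\lambda|})}+\cdots$. The first piece contributes $\lesssim C_{k,n}^{-1}\mu^{-2n}$. On the oscillatory piece, $C_{k,n}|\varphi_{k,\lambda}^{n-1}(r)|\lesssim (r\mu)^{-(n-1)}(\tfrac12\nu(k)r^2|\lambda|)^{-1/4}\sim (r\sqrt{|\lambda|})^{-(n-1)}(r\mu)^{-1/2}\mu^{-?}$; integrating $r^{-(n-1)}\cdot r^{-1/2}\cdot r^{2n-1} = r^{n-1/2}$ over an interval ending near $a\rho$ gives the dominant term $\lesssim \mu^{-(n-1)}|\lambda|^{-1/4}(a\rho)^{n+1/2}/\mu^{1/2}$ up to constants; dividing by $\rho^{2n}$ and collecting powers produces precisely $(\rho\mu)^{-n+1/2}$ after using $\mu^2\sim(2k+n)|\lambda|$. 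The remaining two regimes (the Airy-type transition zone and the exponentially decaying tail) contribute strictly smaller quantities — the tail because of the $e^{-\gamma r^2|\lambda|/2}$ factor, the transition zone because it is short and the amplitude there is no larger — so they are absorbed into the same bound.

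The main obstacle I anticipate is the bookkeeping of powers of $k$, $\lambda$ and $\rho_j$ across the four regimes: one must verify that in every case the exponent of $\rho_j\sqrt{(2k+n)|\lambda|}$ is at most $-n+1/2$ and that no hidden dependence on $k$ or $\lambda$ survives in the constant. The cleanest way to organize this is to nondimensionalize by the substitution $r = s/\mu$ (equivalently $s = r\sqrt{\nu(k)|\lambda|}$), which turns all breakpoints into absolute constants ($\sqrt2$, $\sqrt{\nu(k)}/\mu\cdot\mu=\dots$ — more precisely $s\in[\sqrt2,\nu(k)/\sqrt2]$ etc.), makes the Gaussian tail manifestly integrable, and reduces the whole estimate to bounding $\mu^{-2n}\int_0^{a\rho\mu} (\text{universal bound})\,s^{2n-1}\,ds$ by $C(\rho\mu)^{-n+1/2}\mu^{-2n}\cdot\mu^{2n}$ — i.e. it isolates $(\rho\mu)^{-n+1/2}$ as the only surviving scale. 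I would carry out the argument in these rescaled variables, treat $\rho\mu\le 1$ and $\rho\mu>1$ separately, and in the latter case note that the integrand's $s$-power $s^{n-1/2}$ (coming from amplitude $s^{-(n-1)-1/2}$ times the Jacobian $s^{2n-1}$) is integrable at $0$ and grows like $(a\rho\mu)^{n+1/2}$ at the top, delivering exactly the claimed exponent.
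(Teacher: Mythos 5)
Your overall strategy coincides with the paper's: write $R_k^{n-1}(\lambda,f_j)$ as a one-dimensional integral in polar coordinates, distinguish the cases $a\rho_j\sqrt{(2k+n)|\lambda|}\le 1$ and $>1$, bound the inner piece by the trivial sup bound, and control the outer piece by the pointwise estimates of Lemma \ref{lem:T}, the dominant term being $\int_0^a r^{n-1/2}\,dr$ times $(\rho_j\sqrt{(2k+n)|\lambda|})^{-n+1/2}$. So the route is the same; the issue is a concrete error in the bookkeeping, located exactly where you wrote ``$\mu^{-?}$''.

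The gap is your decision to discard the leftover factor $C_{k,n}$ as ``$\le 1$ and harmless''. It is not merely harmless; it is the factor that makes the estimate close. Lemma \ref{lem:T} bounds $C_{k,n}\,|\varphi_{k,\lambda}^{n-1}(r)|$, and in the oscillatory regime the bound is $C\,(r\sqrt{|\lambda|})^{-(n-1)}\big(\tfrac12\nu(k)r^2|\lambda|\big)^{-1/4}$ --- the prefactor is $(r\sqrt{|\lambda|})^{-(n-1)}$, not $(r\mu)^{-(n-1)}$ as you first wrote. Since $(r\sqrt{|\lambda|})^{-(n-1)}=(2k+n)^{(n-1)/2}\,(r\sqrt{(2k+n)|\lambda|})^{-(n-1)}$, dropping the remaining $C_{k,n}$ leaves a bound that exceeds the target $(r\sqrt{(2k+n)|\lambda|})^{-(n-1/2)}$ by the unbounded factor $(2k+n)^{(n-1)/2}$ for every $n\ge 2$, so a hidden $k$-dependence survives and the lemma does not follow. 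The repair is to use both copies of $C_{k,n}$ supplied by the prefactor $\tfrac{k!(n-1)!}{(k+n-1)!}=C_{k,n}^2$: one copy is consumed by Lemma \ref{lem:T}, and the other, via $C_{k,n}\le c_n(2k+n)^{-(n-1)/2}$, converts $(r\sqrt{|\lambda|})^{-(n-1)}$ into $(r\sqrt{(2k+n)|\lambda|})^{-(n-1)}$; only then does multiplying by $\big(\tfrac12\nu(k)r^2|\lambda|\big)^{-1/4}\sim (r\sqrt{(2k+n)|\lambda|})^{-1/2}$ yield the required $(r\sqrt{(2k+n)|\lambda|})^{-(n-1/2)}$ uniformly in $k$ and $\lambda$. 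The same correction is needed in your first regime: there the true sup bound is $|\varphi_{k,\lambda}^{n-1}|\le C\,C_{k,n}^{-2}$, not $C_{k,n}^{-1}$, and your intermediate quantity $C_{k,n}^{-1}a^{2n}$ is unbounded in $k$ until it is multiplied by the full prefactor $C_{k,n}^2$. With these two copies of $C_{k,n}$ retained, your rescaled-variable organization goes through and reproduces the paper's argument.
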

	\begin{proof} By abuse of notation we write $ \varphi_{k,\lambda}^{n-1}(r)$ in place of $\varphi_{k,\lambda}^{n-1}(z)$ when $ |z| =r.$ As $ f_j $ is defined as the dilation of a radial function,  the Laguerre coefficients are given by the integral
		\begin{equation}\label{lag-co1} R_k^{n-1}(\lambda, f_j) =  \frac{2 \pi^n}{\Gamma(n)} \frac{   k! (n-1)!}{(k+n-1)!}  \int_0^a   \varphi_{k,\lambda}^{n-1}(\rho_j r) r^{2n-1} dr .\end{equation}  
		When $ a \leq (\rho_j \sqrt{ (2k+n)|\lambda| })^{-1} $ we use the bound $\frac{   k! (n-1)!}{(k+n-1)!} |\varphi_{k,\lambda}^{n-1}(r)| \leq 1$ to  estimate  
		$$          \frac{2 \pi^n}{\Gamma(n)}  \frac{   k! (n-1)!}{(k+n-1)!}  \int_0^a   \varphi_{k,\lambda}^{n-1}(\rho_j r) r^{2n-1} dr   \leq  \frac{ \pi^n a^{n+1/2}}{\Gamma(n+1)} 
		\big(\rho_j \sqrt{(2k+n)|\lambda|}\big)^{-n+1/2}.$$ 
		When $ a > (\rho_j \sqrt{ (2k+n)|\lambda| })^{-1} $ we split the integral into two parts, one of which gives the same estimate as above. To estimate the integral taken over $ (\rho_j \sqrt{ (2k+n)|\lambda| })^{-1} < r < a ,$ we use the bound  stated in Lemma  \ref{lem:T} which leads to the estimate 
		$$          \frac{2 \pi^n}{\Gamma(n)}  \frac{   k! (n-1)!}{(k+n-1)!}  \int_{(\rho_j \sqrt{ (2k+n)|\lambda| })^{-1}}^a   \varphi_{k,\lambda}^{n-1}(\rho_j r) r^{2n-1} dr $$  
		$$  \leq  C_n  \big(\rho_j \sqrt{(2k+n)|\lambda|}\big)^{-n+1/2} \int_0^a r^{n-1/2} dr = C_n^\prime a^{n+1/2}  \big(\rho_j \sqrt{(2k+n)|\lambda|}\big)^{-n+1/2}.$$
		Combining the two estimates we get the lemma.
	\end{proof}
	
	\begin{thm}\label{compute}
		Let $\Theta:\R\rightarrow [0,\infty)$ be an even, decreasing function with $\lim_{\lambda \rightarrow \infty}\Theta(\lambda)=0$  for which $\int_{1}^{\infty}\frac{\Theta(\lambda)}{\lambda} d\lambda <\infty.$ Let $ \rho_j $ and $ \tau_j $ be chosen as above. Then the Fourier transform of the function $ F $ constructed in Theorem \ref{construct} satisfies the estimate
		\begin{align*}
			\widehat{F}(\lambda)^*\widehat{F}(\lambda)\leq e^{-2\Theta (\sqrt{H(\lambda)}) \sqrt{H(\lambda)}}, \ \lambda\neq0.
		\end{align*}
		
	\end{thm}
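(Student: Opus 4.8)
The plan is to compute $\widehat{F}(\lambda)$ explicitly using the fact that $F = \lim_k G_k$ in $L^2$ with $G_k = F_1 \ast \cdots \ast F_k$, so that $\widehat{F}(\lambda) = \prod_{j=1}^\infty \widehat{F_j}(\lambda)$ in the sense of operator products (convergence justified by the $L^2$-convergence and the Plancherel theorem). Since $F_j(z,t) = f_j(z) g_j(t)$ with $f_j$ radial, we have $\widehat{F_j}(\lambda) = \widehat{g_j}(\lambda)\, W_\lambda(f_j)$, and because $f_j$ is radial, $W_\lambda(f_j)$ is diagonalized by the scaled Hermite basis: $W_\lambda(f_j) = \sum_{k=0}^\infty R_k^{n-1}(\lambda, f_j) P_k(\lambda)$. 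Here $\widehat{g_j}(\lambda) = \tau_j^{-2}\int_{-\tau_j^2/2}^{\tau_j^2/2} e^{i\lambda t}\, dt = \frac{2}{\lambda \tau_j^2}\sin(\lambda \tau_j^2/2)$, which satisfies $|\widehat{g_j}(\lambda)| \le 1$ always, and more usefully $|\widehat{g_j}(\lambda)| \le 1$ with no decay needed — the central-variable factors only serve to keep the supports small, and I expect they contribute only a harmless bound. Thus on the $k$-th eigenspace of $H(\lambda)$,
\begin{equation}
\widehat{F}(\lambda) P_k(\lambda) = \Big( \prod_{j=1}^\infty \widehat{g_j}(\lambda)\, R_k^{n-1}(\lambda, f_j) \Big) P_k(\lambda),
\end{equation}
so $\widehat{F}(\lambda)^\ast \widehat{F}(\lambda)$ acts on that eigenspace as multiplication by $\prod_{j=1}^\infty |\widehat{g_j}(\lambda)|^2 |R_k^{n-1}(\lambda, f_j)|^2 \le \prod_{j=1}^\infty |R_k^{n-1}(\lambda, f_j)|^2$.

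Next I would invoke Lemma \ref{est-four-lem}: $|R_k^{n-1}(\lambda, f_j)| \le c_n (\rho_j \sqrt{(2k+n)|\lambda|})^{-n+1/2}$. This bound is most useful when $\rho_j \sqrt{(2k+n)|\lambda|}$ is large, i.e. for the indices $j$ that are "resolved" by the eigenvalue $(2k+n)|\lambda|$; for small $j$ one simply uses $|R_k^{n-1}(\lambda, f_j)| \le 1$. The key quantitative choice is $\rho_j \ge c_n^2 e^2 \,\Theta(j)/j$. Writing $\mu = \sqrt{H(\lambda)}$ informally for $\sqrt{(2k+n)|\lambda|}$, I want to show $\prod_j |R_k^{n-1}(\lambda,f_j)|^2 \le e^{-2\Theta(\mu)\mu}$. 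Restricting the product to $1 \le j \le \mu$ (or $j \le \mu^2$, to be determined by matching exponents) and using the Lemma,
\begin{equation}
\prod_{j \le \mu} |R_k^{n-1}(\lambda,f_j)|^2 \le \prod_{j\le \mu} \big( c_n \rho_j^{-n+1/2} \mu^{-n+1/2}\big)^2 \le \prod_{j\le\mu} (c_n \rho_j^{-1/2} \mu^{-1/2})^{2}
\end{equation}
after discarding the favorable extra powers (since $n - 1/2 \ge 1/2$ and the discarded factors are $\le 1$ for large $\mu$), and then $\le \prod_{j \le \mu} (c_n^2/(\rho_j \mu)) \le \prod_{j\le\mu} e^{-2}\,\Theta(j)/(j\mu/j) $ — at which point the chosen lower bound $\rho_j \ge c_n^2 e^2 \Theta(j)/j$ gives $c_n^2/(\rho_j \mu) \le e^{-2} j/(\mu \Theta(j)) \le e^{-2}$ provided $j \le \mu$ and $\Theta$ is decreasing with $\Theta \le 1$ eventually; summing the logarithms, $\sum_{j\le\mu}(-2) = -2\lfloor \mu\rfloor \le -2\mu \Theta(\mu) + O(1)$ since $\Theta(\mu) \le 1$. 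Absorbing the $O(1)$ and the central-variable factors into the constant $C$ yields the claimed inequality. I would be careful to track the exponents of $\mu$ and of $\rho_j$ so that everything balances; the precise window of $j$ (whether $j \le \mu$ or $j \le \mu^2$) and the exact way $\Theta(j)$ is summed against $1/j$ to produce $\mu\Theta(\mu)$ is where the bookkeeping must be done honestly.

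The main obstacle I anticipate is exactly this exponent-matching: extracting the target rate $e^{-2\Theta(\mu)\mu}$ from a product of power-law bounds requires choosing the cutoff in $j$ correctly and using monotonicity of $\Theta$ in the right direction (namely $\Theta(j) \ge \Theta(\mu)$ for $j \le \mu$) to replace the running values $\Theta(j)$ by $\Theta(\mu)$, while the convergence of $\sum_j \rho_j$ (forced by $\sum_j \Theta(j)/j < \infty$) is what makes the whole construction legitimate but plays no role in the decay estimate itself. A secondary technical point is justifying that the infinite operator product $\prod_j \widehat{F_j}(\lambda)$ genuinely equals $\widehat{F}(\lambda)$ pointwise in $\lambda$ on each fixed eigenspace; this follows because $G_k \to F$ in $L^1$, so $\widehat{G_k}(\lambda) \to \widehat{F}(\lambda)$ in operator norm for every $\lambda$, and $\widehat{G_k}(\lambda) P_k(\lambda) = \big(\prod_{j=1}^k \widehat{g_j}(\lambda) R_k^{n-1}(\lambda,f_j)\big) P_k(\lambda)$ by the convolution theorem and the radiality of the $f_j$. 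Once these two points are handled, the inequality $\widehat{F}(\lambda)^\ast \widehat{F}(\lambda) \le e^{-2\Theta(\sqrt{H(\lambda)})\sqrt{H(\lambda)}}$ follows on each eigenspace and hence as an operator inequality.
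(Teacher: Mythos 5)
Your overall strategy is the paper's: use radiality to diagonalize $\widehat{F}(\lambda)$ in the scaled Hermite basis, reduce to bounding the scalar products $\prod_j R_k^{n-1}(\lambda,f_j)$ on each eigenspace, discard the central factors $\widehat{g_j}(\lambda)$ (bounded by $1$), and feed Lemma \ref{est-four-lem} together with the choice $\rho_j \geq c_n^2 e^2\, \Theta(j)/j$ into a truncated product. The genuine gap is exactly at the point you flagged but did not resolve: the choice of the truncation window. With your tentative cutoff $j\leq \mu$, where $\mu=\sqrt{(2k+n)|\lambda|}$, the per-factor bound $c_n^2/(\rho_j\mu)\leq e^{-2}\,j/(\mu\Theta(j))\leq e^{-2}$ requires $j\leq \mu\,\Theta(j)$, and this fails for $j$ in the range $(\mu\Theta(\mu),\mu]$: at $j=\lfloor\mu\rfloor$ it would force $\Theta(\mu)\gtrsim 1$, which is false for large $\mu$ since $\Theta$ decreases to zero. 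Monotonicity of $\Theta$ helps only in the other direction: $\Theta(j)\geq\Theta(\mu)$ for $j\leq\mu$ yields $j\leq\mu\Theta(j)$ only when $j\leq\mu\Theta(\mu)$. Consequently your claimed total $e^{-2\lfloor\mu\rfloor}$ is not obtained --- and as a sanity check it cannot be: a bound $\widehat{F}(\lambda)^\ast\widehat{F}(\lambda)\leq Ce^{-2\sqrt{H(\lambda)}}$ would force the nontrivial compactly supported $F$ to extend holomorphically and hence to vanish (as recalled in the introduction of the paper).

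The repair is precisely the paper's choice: truncate at $N=\lfloor\Theta(\mu)\mu\rfloor$ factors only. Then for every $j\leq N$ one has $j\leq \Theta(\mu)\mu\leq\Theta(j)\mu$ (using $N\leq\mu$), hence $\rho_j\mu\geq\rho_N\mu\geq c_n^2e^2$ by the monotonicity of $\{\rho_j\}$, each factor of the unsquared product is at most $c_n(c_n^2e^2)^{-(n-1/2)}=(c_n^2e^2)^{-(n-1)}e^{-1}\leq e^{-1}$, and the product is at most $Ce^{-N}\leq Ce^{-\Theta(\mu)\mu+1}$, which after squaring is the stated bound. Everything else in your outline --- the identification $\widehat{F}(\lambda)P_k(\lambda)=\bigl(\prod_{j}\widehat{g_j}(\lambda)R_k^{n-1}(\lambda,f_j)\bigr)P_k(\lambda)$ via $L^1$-convergence of $G_N$, the use of $|R_k^{n-1}(\lambda,f_j)|\leq1$ for the untruncated factors, and the discarding of the extra powers of $(\rho_j\mu)^{-1}$ once $\rho_j\mu\geq1$ --- is sound and matches the paper.
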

	
	\begin{proof} Observe that  $ F $ is radial  since each $ F_j $ is radial and hence the Fourier transform $ \widehat{F}(\lambda) $ is a function of the Hermite opertaor $ H(\lambda).$ More precisely, 
		\begin{align}
			\widehat{F}(\lambda)= \sum_{k=0}^{\infty}R_k^{n-1}(\lambda, F) P_k(\lambda)
		\end{align}
		where  the Laguerre coefficients are  explicitly given by  (see  (2.4.7) in \cite{TH3}. There is a typo- the factor $ |\lambda|^{n/2}$ should not be there)
		$$ R_k^{n-1}(\lambda, F) =  \frac{   k! (n-1)!}{(k+n-1)!}  \int_{\C^n}  F^\lambda(z) \varphi_{k,\lambda}^{n-1}(z) dz .$$
		In the above, $ F^\lambda(z) $ stands for the inverse Fourier transform of $ F(z,t) $ in the $ t $ variable.
		Expanding any $ \varphi \in L^2(\R^n) $ in terms of $ \Phi_\alpha^\lambda$ it is easy to see that the conclusion $\widehat{F}(\lambda)^*\widehat{F}(\lambda)\leq e^{-2\Theta (\sqrt{H(\lambda)}) \sqrt{H(\lambda)}} $ follows once we show that 
		$$ (R_k^{n-1}(\lambda, F))^2 \leq C e^{-2\Theta (\sqrt{(2k+n)|\lambda}) \sqrt{(2k+n)|\lambda|}}   $$
		for all $ k \in \mathbb{N} $ and $ \lambda \in \R^\ast.$ Now note that, by definition of $g_j$ and the choice of $a,$ we have 
		$$|\widehat{g}_j(\lambda)|=\left|\frac{\sin (\frac{1}{2}\tau_j^2\lambda)}{\frac{1}{2}\tau_j^2\lambda}\right|\leq 1, \,\,\,  |R_k^{n-1}(\lambda, f_j)| \leq 1.$$ 
		The bound on $ R_k^{n-1}(\lambda, f_j) $  follows from the fact that  $ |\varphi_k^\lambda(z)| \leq  \frac{(k+n-1)!}{k!(n-1)!} .$ Since $ F $ is constructed as the $ L^2 $ limit of the $ N$-fold convolution $ G_N = F_1 \ast F_2 ......\ast F_N $ we observe that for any $ N $
		$$ ( R_k^{n-1}(\lambda, F))^2 \leq (R_k^{n-1}(\lambda, G_N))^2   =  (  \Pi_{j=1}^N R_k^{n-1}(\lambda, F_j))^2 $$
		and hence it is enough to show that  for a given $ k $ and $ \lambda$ one can choose $ N=N(k,\lambda) $ in such a way that 
		\begin{equation}\label{eq6} (  \Pi_{j=1}^N R_k^{n-1}(\lambda, F_j))^2 \leq C e^{-2\Theta (\sqrt{(2k+n)|\lambda|}) \sqrt{(2k+n)|\lambda|}} .\end{equation}
		where $ C $ is independent of $ N.$   From the definition of $ G_N $ it follows that 
		$$ \widehat{G_N}(\lambda) = \Pi_{j=1}^N \widehat{F_j}(\lambda)  =   \Pi_{j=1}^N \big( \sum_{k=0}^\infty  R_k^{n-1}(\lambda, F_j) P_k(\lambda) \big)$$
		and hence  $ R_k^{n-1}(\lambda, G_N) =  \Pi_{j=1}^N R_k^{n-1}(\lambda, F_j).$ As $ F_j(z,t)  = f_j(z) g_j(t) $, we have
		$$   R_k^{n-1}(\lambda, G_N) = \big( \Pi_{j=1}^N  \widehat{g_j}(\lambda) \big)  \big( \Pi_{j=1}^N R_k^{n-1}(\lambda, f_j) \big)  .                   $$
		As the first factor  is bounded by one, it is enough to consider the product $\Pi_{j=1}^N R_k^{n-1}(\lambda, f_j).$
		
		We now  choose $ \rho_j $ satisfying  $ \rho_j \geq  c_n^2\, e^2  \frac{\Theta(j)}{j} $,  where $ c_n $ is the same constant appearing in Lemma \ref{est-four-lem}. We then take
		$ N=  \lfloor \Theta(((2k+n)|\lambda|)^{\frac{1}{2}})((2k+n)|\lambda|)^{\frac{1}{2}}\rfloor $, and consider
		$$\Pi_{j=1}^N R_k^{n-1}(\lambda, f_j) \leq \Pi_{j=1}^N c_n (\rho_j \sqrt{(2k+n)|\lambda|})^{-n+1/2} $$
		where we have used the estimates proved in Lemma \ref{est-four-lem}. As $ \{\rho_j\} $ is decreasing
		\begin{equation}\label{eq5} \Pi_{j=1}^N c_n (\rho_j \sqrt{(2k+n)|\lambda|})^{-n+1/2} \leq c_n^N    \big(\rho_N\sqrt{(2k+n)|\lambda| }\big)^{-(n-1/2)N}.\end{equation}
		By the choice of $ \rho_j $, it follows that 
		$$\rho_N^2(2k+n)|\lambda| \geq  c_n^4 e^4 \frac{\Theta(N)^2}{N^2} (2k+n)|\lambda| . $$
		As $ \Theta $ is decreasing and $ N \leq \sqrt{(2k+n)|\lambda|)} $, we have $ \Theta(N) \geq \Theta( \sqrt{(2k+n)|\lambda|}) $ and so 
		$$  \Theta(N)^2 (2k+n)|\lambda|  \geq \Theta\big( \sqrt{(2k+n)|\lambda|} \big)^2 (2k+n)|\lambda|  \geq N^2 $$
		which proves that $\rho_N^2(2k+n)|\lambda| \geq  c_n^4 e^4 .$ Using this in (\ref{eq5}) we obtain
		$$  \Pi_{j=1}^N c_n \big(\rho_j \sqrt{(2k+n)|\lambda|}\big)^{-n+1/2} \leq  (c_n^2 e^2)^{-(n-1)N} e^{-N}.    $$
		Finally, as $ N+1 \geq \Theta(((2k+n)|\lambda|)^{\frac{1}{2}})((2k+n)|\lambda|)^{\frac{1}{2}} $, we obtain the estimate (\ref{eq6}).
	\end{proof}
	
	\subsection{Ingham's theorem} We can now complete the proofs of  Theorems \ref{ingh-hei}, and \ref{ingh-hei-variant}. Since half of the theorem has been already proved, as already mentioned in Section 1, we only need to prove the Theorem \ref{ingh-hei-variant}.\\
	
	\textbf{\textit{Proof of Theorem \ref{ingh-hei-variant}}:}
	 Fix $\lambda\neq 0.$ By the hypothesis, $f^{\lambda}$  vanishes on an open set $U_{\lambda}$ in $\mathbb{C}^n.$		First we assume that $\Theta (\lambda)\geq c\, |\lambda|^{-\frac{1}{2}},\,\, |\lambda| \geq 1.$
		In view of Plancherel formula \eqref{wplan} for the Weyl transform, we have $$(2\pi)^n\|L_{\lambda}^mf^{\lambda}\|^2_2=|\lambda|^n\|W_{\lambda}(L_{\lambda}^mf^{\lambda})\|_{HS}^2=|\lambda|^n\|\widehat{f}(\lambda)H(\lambda)^m\|_{HS}^2.$$ Using the formula for Hilbert-Schmidt norm of an operator we have 
		$$(2\pi)^n\|L_{\lambda}^mf^{\lambda}\|^2_2=|\lambda|^n\sum_{\alpha}((2|\alpha|+n)|\lambda|)^{2m}\|\hat{f}(\lambda)\Phi^{\lambda}_{\alpha}\|^2_2.$$ Now, the given condition on the Fourier transform leads to the estimate
		\begin{align}
			\label{est1}
			(2\pi)^n\|L_{\lambda}^mf^{\lambda}\|^2_2\leq& C |\lambda|^n\sum_{\alpha}((2|\alpha|+n)|\lambda|)^{2m}e^{-2\Theta (((2|\alpha|+n)|\lambda|)^{\frac{1}{2}})((2|\alpha|+n)|\lambda|)^{\frac{1}{2}}}\nonumber\\
			\leq & C|\lambda|\sum_{k=0}^{\infty}((2k+n)|\lambda|)^{2m+n-1}e^{-2\Theta(((2k+n)|\lambda|)^{\frac{1}{2}})((2k+n)|\lambda|)^{\frac{1}{2}}}.
		\end{align}
	We  write the last sum as $I_1+I_2$, where 
	\begin{equation*}
		I_1:=\sum_{k\geq 0, (2k+n)|\lambda|\leq m^8}((2k+n)|\lambda|)^{2m+n-1}e^{-2\Theta(((2k+n)|\lambda|)^{\frac{1}{2}})((2k+n)|\lambda|)^{\frac{1}{2}}}, ~\text{and} 
	\end{equation*} 
\begin{equation*}
	I_2:=\sum_{k\geq 0, (2k+n)|\lambda|>m^8}((2k+n)|\lambda|)^{2m+n-1}e^{-2\Theta(((2k+n)|\lambda|)^{\frac{1}{2}})((2k+n)|\lambda|)^{\frac{1}{2}}}.
\end{equation*}
Now, we estimate each sum separately. Notice that when $(2k+n)|\lambda|\leq m^8,$ we have $ \Theta(((2k+n)|\lambda|)^{\frac{1}{2}})\geq \Theta(m^4)$, as $\Theta$ is decreasing. This shows that 
$$I_1\leq \sum_{k\geq 0, (2k+n)|\lambda|\leq m^8}((2k+n)|\lambda|)^{2m+n-1}e^{-2\Theta(m^4)((2k+n)|\lambda|)^{\frac{1}{2}}} $$
which can be dominated by 
\begin{align*}
	&\sum_{k\geq 0, (2k+n)|\lambda|\leq m^8}((2k+n)|\lambda|)^n\int_{(2k+n)|\lambda|}^{(\sqrt{(2k+n)|\lambda|}+1)^2}x^{2m-1}e^{-2\Theta(m^4)(\sqrt{x}-1)}dx \\ &\leq e^{2\Theta(m^4)}m^{8n}\int_{0}^{\infty}x^{2m-1}e^{-2\Theta(m^4)\sqrt{x}}dx.
\end{align*}
		The change of variable $y=2\Theta(m^4)\sqrt{x}$ transform the last expression into  
		$$e^{2\Theta(m^4)}\frac{m^{8n}}{(2\Theta(m^4))^{4m}}\int_{0}^{\infty}y^{4m-1}e^{-y}dy=e^{2\Theta(m^4)}\frac{m^{8n}}{(2\Theta(m^4))^{4m}}\Gamma(4m).$$
		This along with the fact that $\Theta(m^4)\leq \Theta(1)$ shows that 
		\begin{equation*}
			I_1\leq C \frac{m^{8n}}{(2\Theta(m^4))^{4m}}\Gamma(4m).
		\end{equation*}
	Using  Stirling's formula (see Ahlfors \cite{A}) $ \Gamma(x) = \sqrt{2\pi}\, x^{x-1/2} \,e^{-x} e^{\theta(x)/12x} ,  0< \theta(x) <1 $, which is  valid for $ x > 0 ,$ for large $m$, we observe that
	\begin{equation}
		\label{i1}
		I_1\leq C\left(\frac{2m}{\Theta(m^4)}\right)^{4m}.
	\end{equation}
Now, to estimate $I_2$, we make use of the initial assumption that $\Theta(t)\geq c\, t^{-1/2}$ for $t\geq 1.$ Following the same procedure as above, we observe that $I_2$ is dominated by 
\begin{align*}
	&\sum_{k\geq 0, (2k+n)|\lambda|> m^8}\int_{(2k+n)|\lambda|}^{((2k+n)|\lambda|)^2}x^{2m+n-1}e^{-2c\sqrt{x}}dx\\ &\leq e^{-c\,m^4}\sum_{k\geq 0, (2k+n)|\lambda|> m^8}\int_{(2k+n)|\lambda|}^{((2k+n)|\lambda|)^2}x^{2m+n-1}e^{-c\sqrt{x}}dx \\ & =e^{-c\,m^4}\int_{0}^{\infty} x^{2m+n-1}e^{-2c\sqrt{x}}dx.
\end{align*}
		Again the change of variables $y=c\sqrt{x}$ transforms the above integral into 
		$$2c^{-(4m+2n-2)}\int_{0}^{\infty}y^{4m+2n-1}e^{-y}dy=2c^{-(4m+2n-2)}\Gamma(4m+2n).$$ Hence, we obtain $$I_2\leq 2c^{-(4m+2n-2)}\Gamma(4m+2n)e^{-cm^4}. $$
		Now, for large $m$, using the fact that $\Gamma(4m+2n)\leq \Gamma(5m)$, and Stirling's formula we have 
		$$I_2\leq C(c^{-4}(5m)^5e^{-cm^3})^m.$$
		But the right hand side of above goes to zero as $m\rightarrow \infty.$ Hence, in view of \eqref{i1}, for large $m$, we conclude that 
		\begin{equation}
			I_1+I_2\leq   C\left(\frac{2m}{\Theta(m^4)}\right)^{4m}
		\end{equation} 
	which from \eqref{est1} yields for large $m$ 
	\begin{align*}
		(2\pi)^n\|L_{\lambda}^mf^{\lambda}\|^2_2\leq C|\lambda|\left(\frac{2m}{\Theta(m^4)}\right)^{4m}.
	\end{align*}
		The hypothesis on $ \Theta, $ namely $\int_{1}^{\infty}\frac{\Theta (t)}{t}dt=\infty,$  implies that $\int_{1}^{\infty}\frac{\Theta (y^4)}{y}dy=\infty.$ Hence, by integral test we get $\sum_{m=1}^{\infty}\frac{\Theta(m^4)}{m}=\infty.$ Therefore, it follows that  $$\sum_{m=1}^{\infty}\|L_{\lambda}^mf^{\lambda}\|^{-\frac{1}{2m}}_2=\infty.$$ Since $f^{\lambda}$ vanishes on an open set, by the Theorem \ref{ch-L-strong} ( analogue of Chernoff's theorem for $L_{\lambda}$) we conclude that $f^{\lambda}=0$ which is true for all $\lambda\neq 0.$ Hence $f=0.$ 
		
		Now, we consider the general case. The function $\Psi(y)= (1+|y|)^{-1/2} $ satisfies  $\int_{1}^{\infty}\frac{\Psi(y)}{y}dy<\infty.$ By Theorem \ref{construct}  we can construct a compactly supported  radial function $ F \in L^2(\mathbb{H}^n)$  such that $$\hat{F}(\lambda)^\ast \hat{F}(\lambda)\leq e^{-2\Psi (\sqrt{H(\lambda)})\sqrt{H(\lambda)}},\ \lambda\neq 0.$$ We can further arrange that $\text{supp}(F)\subset B_{\mathbb{C}^n}(0,\delta)\times (-a, a)$ for some $\delta, a>0.$  We now consider the function $h=f\ast F.$  Notice that $$h^{\lambda}(z)=(f\ast F)^{\lambda}(z)=\int_{\C^n}f^{\lambda}(z-w)F^{\lambda}(w)e^{\frac{i\lambda}{2}Im(z.\bar{w})}dw.$$ As $ f^{\lambda} $ is assumed  to vanish on $U_{\lambda}$, the function $h^{\lambda}$ vanishes on a smaller open set $U_{\lambda,\delta}\subset U_{\lambda}.$ We now claim that 
		$$  \widehat{h}(\lambda)^\ast \widehat{h}(\lambda) \leq e^{- 2 \Phi(\sqrt{H(\lambda)}) \sqrt{H(\lambda)}} $$
		where $ \Phi(y) = \Theta(y)+\Psi(y).$ As $ \widehat{h}(\lambda) = \widehat{f}(\lambda)\widehat{F}(\lambda) $, for any $ \varphi \in L^2(\R^n) $ we have
		$$ \langle \widehat{h}(\lambda)^\ast \widehat{h}(\lambda) \varphi, \varphi \rangle = \langle \widehat{f}(\lambda)^\ast \widehat{f}(\lambda) \widehat{F}(\lambda) \varphi, \widehat{F}(\lambda)\varphi \rangle .$$
		The hypothesis on $ f $ gives us the estimate 
		$$ \langle \widehat{f}(\lambda)^\ast \widehat{f}(\lambda) \widehat{F}(\lambda) \varphi, \widehat{F}(\lambda)\varphi \rangle  \leq C \langle e^{- 2 \Theta(\sqrt{H(\lambda)}) \sqrt{H(\lambda)}} \widehat{F}(\lambda) \varphi, \widehat{F}(\lambda)\varphi \rangle .$$
		As $ F $ is radial, $ \widehat{F}(\lambda) $ commutes with any function of $ H(\lambda)$ and hence the right hand side can be estimated using the decay of $ \widehat{F}(\lambda)$:
		$$ \langle  \widehat{F}(\lambda)^\ast \widehat{F}(\lambda) e^{- \Theta(\sqrt{H(\lambda)}) \sqrt{H(\lambda)}}\varphi,  e^{- \Theta(\sqrt{H(\lambda)}) \sqrt{H(\lambda)}}\varphi \rangle 
		\leq C  \langle   e^{-2 (\Theta+\Psi)(\sqrt{H(\lambda)}) \sqrt{H(\lambda)}}\varphi,  \varphi \rangle .$$
		This proves our claim on $ \widehat{h}(\lambda) $ with $ \Phi= \Theta + \Psi.$ As $ \Phi(y) \geq |y|^{-1/2} $, by the already proved
		part of the theorem we conclude that $h=0.$ In order to conclude that $f=0$ we proceed as follows.

		Given $ F $ as above, let us consider $  \delta_rF(z,t) = F(rz,r^2t).$ It has been shown elsewhere (see e.g. \cite{LT}) that
		$$ \widehat{\delta_rF}(\lambda) = r^{-(2n+2)}  d_r \circ \widehat{F}(r^{-2} \lambda) \circ d_r^{-1} $$
		where $ d_r $ is the standard dilation on $ \R^n$ given by $ d_r\varphi(x) = \varphi(rx).$ The property of the function $ F ,$ namely $\hat{F}(\lambda)^\ast \hat{F}(\lambda)\leq e^{-2\Psi (\sqrt{H(\lambda)})\sqrt{H(\lambda)}}$ gives us 
		$$ \widehat{\delta_rF}(\lambda)^\ast \widehat{\delta_rF}(\lambda) \leq C r^{-2(2n+2)}    d_r \circ e^{-2\Psi (\sqrt{H(\lambda/r^2)})\sqrt{H(\lambda/r^2)}} \circ d_r^{-1} .$$
		Testing against $ \Phi_\alpha^\lambda $ we can simplify the right hand side which gives us
		$$ \widehat{\delta_rF}(\lambda)^\ast \widehat{\delta_rF}(\lambda) \leq C r^{-2(2n+2)}      e^{-2\Psi_r (\sqrt{H(\lambda)})\sqrt{H(\lambda)}}, $$ 
		where $ \Psi_r(y) = \frac{1}{r}\Psi(y/r).$ If we let $ F_\varepsilon(x) = \varepsilon^{-(2n+2)} \delta_{\varepsilon^{-1}}F(x) $, then it follows that $F_\varepsilon $ is an approximate identity. Moreover, $ F_\varepsilon $ is compactly supported and satisfies the same hypothesis as $ F $ with $\Psi(y)$ replaced by $ \varepsilon \Psi(\varepsilon y)$ which has the same integrability and decay conditions. Hence, working with $F_{\varepsilon}$ we can conclude that $f\ast F_{\varepsilon} = 0$ for any $\varepsilon > 0.$ Letting $\varepsilon\rightarrow0$ and
		noting that $f \ast F_{\varepsilon}$ converges to $f$ in $L^1(\He^n)$, we conclude that $f = 0.$ This completes the
		proof. \qed
\begin{rem} It would be interesting to see whether the conclusion of the Theorem \ref{ingh-hei-variant} still holds true under the assumption that the function vanishes on a non-empty open subset of $\He^n.$ A moment's thought staring at the above proof reveals that this can be achieved if we use an analogue of the Theorem \ref{ch-euc} for the sublaplacian instead of special Hermite operators. But it turns out that proving an analogue of Theorem \ref{ch-euc} is a very interesting and difficult open problem. We hope to revisit this in the near future.
\end{rem}
	
	\section*{Acknowledgments}  The work of the first named author is supported by INSPIRE Faculty Awards from the Department of Science and Technology. The second author is supported by Int.Ph.D. scholarship from Indian Institute of Science. The third named author is supported by NBHM Post-Doctoral fellowship from the Department of Atomic Energy (DAE), Government of India. The work of the last named author  is supported by  J. C. Bose Fellowship from the Department of Science and Technology, Government of India.\\

\end{document}